\DeclareMathOperator*{\bigE}{\scaleobj{1.2}{\exists}}
\DeclareMathOperator*{\bigM}{\bigcurlywedge}
\newcommand{\iex}[1]{\underset{#1}{\bigE}}
\newcommand{\siex}{\bigE}
\newcommand{\nocomma}{}
\newcommand{\tmmathbf}[1]{\ensuremath{\boldsymbol{#1}}}
\newcommand{\tmop}[1]{\ensuremath{\operatorname{#1}}}
\newcommand{\tmrsub}[1]{\ensuremath{_{\textrm{#1}}}}
\newcommand{\tmrsup}[1]{\textsuperscript{#1}}
\newcommand{\tmtextbf}[1]{\text{{\bfseries{#1}}}}
\newcommand{\triplesim}{\asymp}
\newcommand{\tmtextit}[1]{\text{{\itshape{#1}}}}
\newcommand{\longrightarrowlim}{\mathop{\longrightarrow}\limits}
\def\A{\mathcal{A}}
\def\sepa{\mathscr{S}}
\def\theasm{\mathbf{Asm}}
\newcommand{\restr}[2]{#1_{\mid_{#2}}}
\newcommand{\asm}[1]{\theasm_{#1}}
\newcommand{\asma}{\asm{\A}}
\newcommand{\mass}{\asm{M}}
\newcommand{\intpr}[1]{(#1)\tmrsup{\ensuremath{\mathcal{A}}}}
\newcommand{\lam}[2]{\ensuremath{\lambda #1 . #2}}
\newcommand{\ilam}[2]{{\intpr{{\lam{#1}{#2}}}}}
\newcommand{\lamm}[1]{\ensuremath{\tmmathbf{\lambda} #1}}
\newcommand{\exi}[1]{\mathsf{E}_{#1}}
\newcommand{\eexi}[1]{\mathsf{E}^{\approx}_{#1}}
\newcommand{\exiplus}[1]{\mathsf{E}^{+}_{#1}}
\newcommand{\car}[1]{{\underline{#1}}}
\newcommand{\lpair}[2]{\ensuremath{\tmmathbf{\left\langle #1, #2
\right\rangle}}}
\newcommand{\lpizero}{\ensuremath{\tmmathbf{{\pi}}}\tmrsub{0}}
\newcommand{\lpione}{\ensuremath{\tmmathbf{{\pi}}}\tmrsub{1}}
\newcommand{\lpitwo}{\ensuremath{\tmmathbf{{\pi}}}\tmrsub{2}}
\def\trip{\mathbb{P}}
\newcommand{\Rf}{\ensuremath{\mathfrak{R}_f}}
\newcommand{\tripalg}[1]{\trip \left[ #1 \right]}
\newcommand{\fibralg}[2]{\tripalg{#1}_{#2}}
\newcommand{\fibr}[1]{\fibralg{\A}{#1}}
\newcommand{\sfibr}[1]{\fibralg{\sepa}{#1}}
\newcommand{\mfibr}[1]{\fibralg{M}{#1}}
\newcommand{\Equ}[1]{\tmop{Equ}\tmrsub{#1}\tmrsup{+}}
\newcommand{\equ}{\ensuremath{\approx}}
\newcommand{\ngh}[1]{#1\tmrsup{+}}
\newcommand{\dum}{\ensuremath{\diamond}}
\newcommand{\vld}[1]{{\Vdash} #1}
\newcommand{\letin}[2]{\tmtextbf{let} \; #1 \; \tmtextbf{=} \; #2 \; \tmtextbf{in} \;}
\newcommand{\pows}[1]{\ensuremath{\mathfrak{P}}(#1)}
\newcommand{\ppows}[1]{\ensuremath{\mathfrak{P}_{\bullet}\left(#1\right)}}
\newcommand{\arobj}[3]{\ensuremath{\scalefont{.75} \left[\begin{array}{l}
  #2\\
  \downarrow #1\\
  #3
\end{array}\right]}}
\newcommand{\pr}[2]{#1, #2}
\newcommand{\y}{\ensuremath{\tmmathbf{y}}}
\newcommand{\tops}{\ensuremath{\tmmathbf{\tmop{Top}}}}
\newcommand{\Reg}{\ensuremath{\tmmathbf{\tmop{Reg}}}}
\newcommand{\Lex}{\ensuremath{\tmmathbf{\tmop{Lex}}}}
\newcommand{\Ex}{\ensuremath{\tmmathbf{\tmop{Ex}}}}
\newcommand{\ens}{\ensuremath{\tmmathbf{\tmop{Set}}}}
\newcommand{\cat}{\ensuremath{\tmmathbf{\tmop{Cat}}}}
\newcommand{\ab}{\ensuremath{\tmmathbf{\tmop{Ab}}}}
\newcommand{\qgrpd}[1]{\tmtextbf{PGrpd}(#1)}
\newcommand{\eff}{\ensuremath{\mathcal{E}\!f\!f}}
\newcommand{\omegaset}{\ensuremath{\omega \ens}}
\newcommand{\pass}{\ensuremath{\mathbf{P}}\ensuremath{\mathbf{A}}\ensuremath{\mathbf{s}}\ensuremath{\mathbf{s}}}
\newcommand{\eset}[1]{(#1,\: {\approx})}
\newcommand{\eeq}[2]{\mid \! #1 \: {\approx} \: #2 \! \mid}
\newcommand{\ieeq}[3]{\ensuremath{\left| #1 \: \approx_{#3} \: #2 \right|}}
\newcommand{\eeqq}[2]{\mid#1 \: {\triplesim} \: #2\mid}
\newcommand{\sym}[1]{\mathsf{S}\tmrsub{#1}}
\newcommand{\trans}[1]{\mathsf{T}\tmrsub{#1}}
\newcommand{\ext}[1]{\mathsf{Ext}\tmrsub{#1}}
\newcommand{\str}[1]{\mathsf{Str}\tmrsub{#1}}
\newcommand{\sv}[1]{\mathsf{Sv}\tmrsub{#1}}
\newcommand{\tot}[1]{\mathsf{Tot}\tmrsub{#1}}
\newcommand{\ide}{\ensuremath{\tmmathbf{\mathtt{I}}}}
\newcommand{\reglex}[1]{\ensuremath{#1_{\tmop{reg} / \tmop{lex}}}}
\newcommand{\exreg}[1]{\ensuremath{#1_{\tmop{ex} / \tmop{reg}}}}
\newcommand{\exlex}[1]{\ensuremath{#1_{\tmop{ex} / \tmop{lex}}}}
\newcommand{\ensa}{{\ens}[\ensuremath{\mathcal{A}}]}
\newcommand{\tmem}[1]{{\em #1\/}}
\newcommand{\assign}{:=}
\newcommand{\nin}{\not\in}
\newcommand{\tmdummy}{$\mbox{}$}
\newenvironment{enumeratenumeric}{\begin{enumerate}[1.] }{\end{enumerate}}
\newenvironment{enumerateroman}{\begin{enumerate}[i.] }{\end{enumerate}}
\newenvironment{itemizeminus}{\begin{itemize} }{\end{itemize}}
\newtheorem{theorem}{Theorem}[section]
\newtheorem{lemma}{Lemma}[section]
\newtheorem{proposition}{Proposition}[section]
\newtheorem{corollary}{Corollary}[section]
\theoremstyle{definition}
\newtheorem{definition}{Definition}[section]
\newtheorem{example}{Example}[section]
\theoremstyle{remark}
\newtheorem{remark}{Remark}[section]
\newtheorem*{notation}{Notation}
\def\IMERL{Instituto de Matem{\'a}tica y Estad{\'i}stica Rafael Laguardia
  (IMERL), Facultad de Ingenier{\'i}a, Universidad de la Rep{\'u}blica,
  Julio Herrera y Reissig 565, C.P. 11300 Montevideo, Uruguay}
\def\LAMA{Laboratoire de Math{\'e}matiques (LAMA),
  Universit{\'e} Savoie Mont Blanc,
  B{\^a}timent Le Chablais, Campus scientifique,
  73376 Le Bourget du Lac, France}
\begin{document}

{{\title{Completions of Implicative
Assemblies}

\author[A. Miquel]{Alexandre Miquel}
\address{\IMERL}
\author[K. Worytkiewicz]{Krzysztof Worytkiewicz}
\address{\LAMA}
\date{}

\maketitle}}

\begin{abstract}
  We continue our work on implicative assemblies by investigating under which
  circumstances a subset $M \subseteq \sepa$ gives rise to a lex full
  subcategory $\mass$ of the quasitopos
  $\asma$ of all assemblies such that
  $\reglex{(\mass)} \simeq
  \asma$. We establish a
  characterisation. Furthermore, this latter is relevant to the study of
  $\mass$'s ex/lex-completion.
\end{abstract}

\tableofcontents
\clearpage

\section{Introduction}
\label{sec:intro}

In this note we investigate under which circumstances a subset $M \subseteq
\sepa$ gives rise to a lex full subcategory $\mass$
of the quasitopos $\asma$ of all
assemblies such that $\reglex{(\mass)} \simeq
\asma$. It is well-known that we have
such a situation in the effective topos $\eff$
{\cite{hyland1980tripos,hyland1982effective}} where there is the subcategory
$\pass \subseteq \omegaset \subseteq \eff$ whose objects are known as
{\tmem{partitioned assemblies}} (their existence predicates are singletons
\cite{robinson1990colimit,rosolini2019elementary}). We establish a
characterisation for the implicative case and the case of partitioned
assemblies turns out to be quite easy going. We next observe that
$\exreg{(\asma)}$ is always a topos,
which is a rather immediate consequence of the material in
{\cite{menni2000exact}}. This observation implies that
$\exlex{(\mass)}$ is a topos as well. We then
construct a functor
\[ \mathcal{K}: \exlex{(\mass)} \rightarrow \ens
   [\mathcal{A}] \]
the latter being the topos constructed from an implicative tripos
{\cite{miquel2020implicative,Castro:2023aa}} by means of the tripos-to-topos
construction. It turns out that $\mathcal{K}$ may not always be an equivalence, although it is always the case when $\mathcal{A}$ is compatible with
joins. There are undoubtly connections to the material in
{\cite{maietti2015unifying}}, we only (very) recently became aware of the
latter work. Then again our focus is quite different as we specifically
investigate categorical aspects of implicative algebras. The reader is
referred to {\cite{miquel2020implicative,Castro:2023aa}} for basic material
about implicative algebras and implicative assemblies.

\section{Regular Completions}
\label{sec:regular}

\subsection{Memjog on Regular Categories}
{\tmdummy}\\ \\
The notion of regular category axiomatises image factorisations, as a matter
of fact an abelian category is an additive regular category. It is thus not
really surprising that regular categories have been studied for some time
{\cite{barr1971exact}}.

\begin{definition}
  \leavevmode 

  \begin{enumeratenumeric}
    \item A {\tmem{regular}} epi is an epi which is a coequaliser (in any
    category).

    \item A {\tmem{lex category}} is a category with finite limits.
  \end{enumeratenumeric}
\end{definition}

\begin{remark}
  In a lex category, a regular epi is a coequaliser if its kernel pair.
\end{remark}

\begin{definition}
  A lex category $\mathbb{D}$ is regular if
  \begin{enumerateroman}
    \item it has coequalisers of kernel pairs;
    \item regular epis are stable under pullback.
  \end{enumerateroman}
\end{definition}

Despite notorious non-examples like $\tops$ (always troubles with this one..)
or $\cat$, regular categories abound in nature: $\ens$,
$\mathbf{G}\mathbf{r}\mathbf{p}$ (all groups), $\ab$, any LCCC with
coequalisers (thus any quasitopos), any topos etc etc. As mentioned, any
abelian category is regular as well. This is the reason why in the context of
regular categories we tend to call {\tmem{exact sequence}} a coequaliser
diagram of a kernel pair. For, the datum

\begin{center}
  \raisebox{-0.5\height}{\includegraphics[width=14.8909550045914cm,height=0.793585202676112cm]{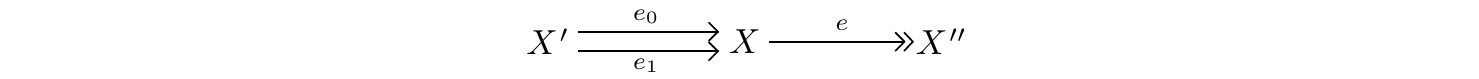}}
\end{center}

(with $(e_0, e_1)$ the kernel pair of $e$) in an abelian category yields the
short exact sequence

\begin{center}
  \raisebox{-0.5\height}{\includegraphics[width=14.8909550045914cm,height=1.0910730683458cm]{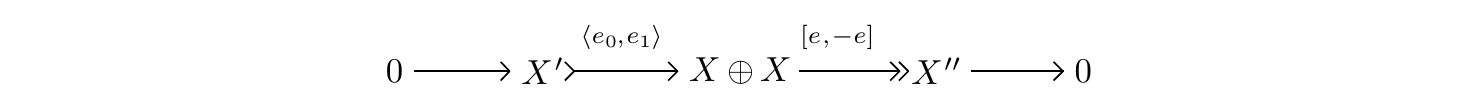}}
\end{center}

and vice versa. An equivalent definition of a regular category is then as
above but with condition $i \nocomma i.$ replaced by requiring that exact
sequences are stable under pullback.

Given any morphism $f : X \rightarrow Y$ in a regular category we can
construct the diagram

\begin{center}
  \raisebox{-0.5\height}{\includegraphics[width=14.8909550045914cm,height=1.98350386986751cm]{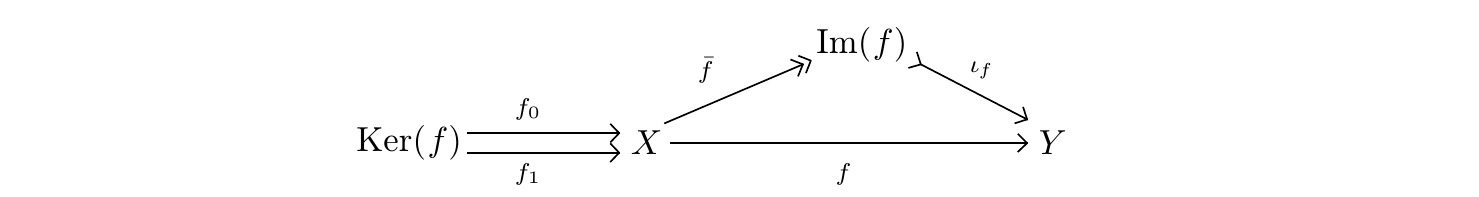}}
\end{center}

with $\tmop{Im} (f) \cong X / \tmop{Ker} (f)$, $\bar{f}$ the coequaliser of
$f$'s kernel pair $(f_0, f_1)$ and $\iota_f$ given by universal property (it
is an exercise in diagram chasing to prove that $\iota_f$ is mono). In an
abelian category it is just the first isomorphism theorem.

\begin{notation}
  Given a morphism $f$ in a regular category, we shall write $f = \iota_f
  \circ \bar{f}$ for its factorisation with $\iota_f$ mono and $\bar{f}$
  regular epi.
\end{notation}

Regular epis and monos form an orthogonal (thus functorial) factorisation
system $(\mathcal{E}_{\tmop{reg}}, \mathcal{M})$. Another equivalent
definition of a regular category is as a lex category with pullback-stable
image factorisations.

\begin{example}
  $\mathbf{A}\mathbf{s}\mathbf{m}_{\mathcal{A}}$ is a quasitopos, hence
  regular. It is nonetheless handy to know how to calculate image
  factorisations. Let $f : X \rightarrow Y$ be a morphism in
  $\mathbf{A}\mathbf{s}\mathbf{m}_{\mathcal{A}}$. Its kernel pair is given by
  the pullback square

  \begin{center}
    \raisebox{-0.5\height}{\includegraphics[width=14.8909550045914cm,height=2.87590187590188cm]{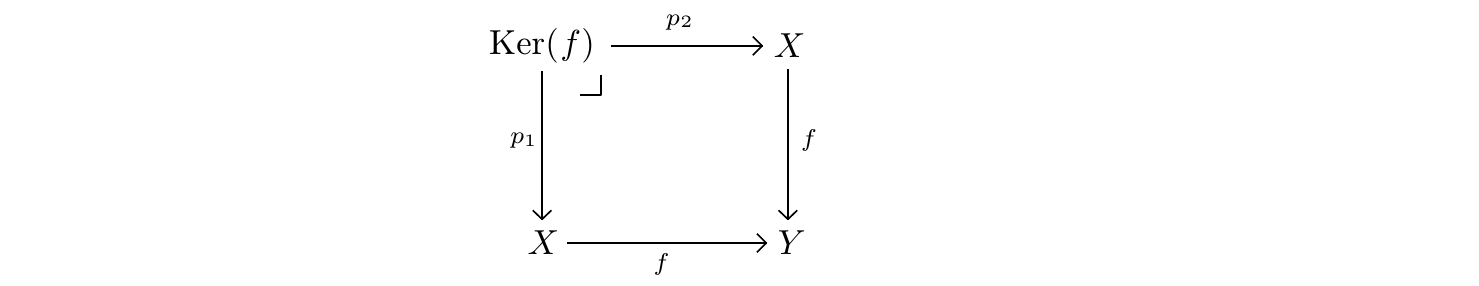}}
  \end{center}

  Recall that a pullback square in
  $\mathbf{A}\mathbf{s}\mathbf{m}_{\mathcal{A}}$ is given on carriers by the
  forgetful functor $\Gamma : \mathbf{A}\mathbf{s}\mathbf{m}_{\mathcal{A}}
  \rightarrow \ens$, with existence and projections inherited from the product
  in $\mathbf{A}\mathbf{s}\mathbf{m}_{\mathcal{A}}$, so
  $\exi{\tmop{Ker} (f)} (x, x') = x \sqcap x'$
  while $p_1$ and $p_2$ are tracked by
  $\left(\lam{z}{z \left( \lam{x \nocomma y}{x} \right)} \right)^{\mathcal{A}}$
  and
  $\left( \lam{z}{z \left( \lam{x \nocomma y}{\nocomma y} \right)}
  \right)^{\mathcal{A}}$, respectively. Let $[x] \assign f^{- 1} (f (x))$ be
  the inverse image of $f (x)$. The coequaliser diagram

  \begin{center}
    \raisebox{-0.5\height}{\includegraphics[width=14.8909550045914cm,height=1.48770169224715cm]{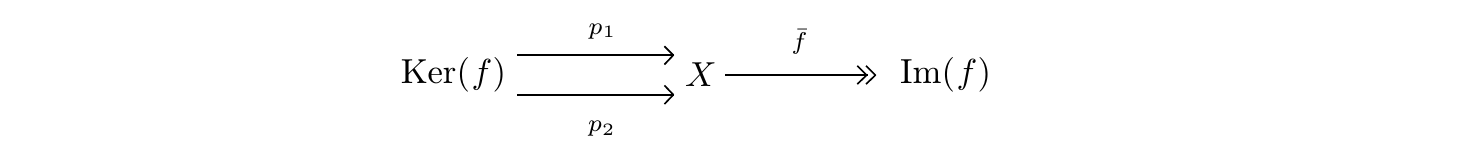}}
  \end{center}

  is given by
  \begin{enumerateroman}
    \item $\car{\tmop{Im} (f)} = X / \sim$ where $\sim$ is the equivalence
    relation generated by $f (x) \sim f (x')$, so \ $\car{\tmop{Im} (f)}$ is
    the set $\{ [x] |x \in X \}$. The existence predicate is
    $\exi{\tmop{Im} (f)} ([x]) = \iex{x' \in [x]} \exi{X} (x')$;

    \item $\bar{f} : x \mapsto [x]$ is the canonical projection, tracked by
    $\left( \lam{x \nocomma z}{z \nocomma x} \right)^{\mathcal{A}}$.
  \end{enumerateroman}
  The image factorisation

  \begin{center}
    \raisebox{-0.5\height}{\includegraphics[width=14.8909550045914cm,height=1.8843303161485cm]{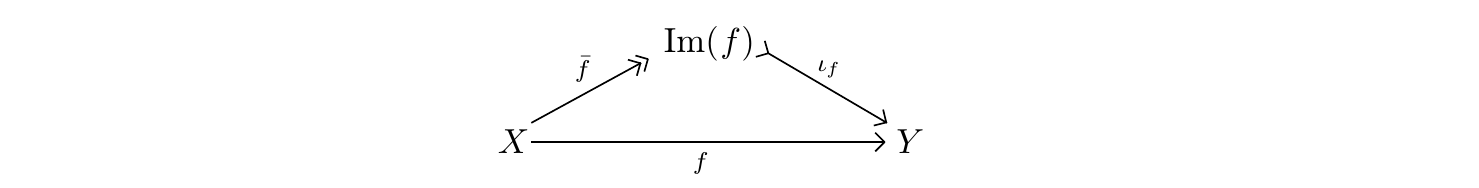}}
  \end{center}

  is in turn given by universal property of $\tmop{Im} (f)$. Specifically, we
  have $\iota_f ([x]_f) = f (x)$ independently of the choice of represantant
  $x$, tracked by $\ilam{z}{z \tau}$ with $\tau$ some tracker of $f$. It can be
  seen without invoking category theory that $\iota_f$ is injective as a map,
  thus a mono in $\mathbf{A}\mathbf{s}\mathbf{m}_{\mathcal{A}}$.
\end{example}

\subsection{The Universal Property of a Regular Completion}
\label{subsec:reg-universal}

\begin{definition} \leavevmode
  \begin{enumeratenumeric}
    \item A {\tmem{lex functor}} is a functor commuting with finite limits.
    \item Let $\mathbb{D}$ and $\mathbb{D}'$ be regular categories. An
    {\tmem{exact functor}} $R : \mathbb{D} \rightarrow \mathbb{D}'$ is a lex
    functor which preserves regular epis.
  \end{enumeratenumeric}
\end{definition}

Equivalently, a lex functor preserving exact sequences is exact (whence the
name). An exact functor obviously preserves image factorisations.

\begin{notation}
  We shall write
  \begin{itemizeminus}
    \item {\Lex} for the 2-category of lex categories, lex funtors and their
    natural transformations;
    \item {\Reg} for the 2-category of regular categories, exact funtors and
    their natural transformations;
    \item $| - | : \Reg \rightarrow \Lex$ for the forgetful $2$-functor.
  \end{itemizeminus}
\end{notation}

It is part of the lore that $| - |$ has a left biadjoint $(-)_{\tmop{reg} /
\tmop{lex}}$. Given a lex category $\mathbb{C}$ we thus have a lex functor $\y
: \mathbb{C} \rightarrow \mathbb{C}_{\tmop{reg} / \tmop{lex}}$ such that for
any lex functor $L : \mathbb{C} \rightarrow \mathbb{D}$ with $\mathbb{D}$ a
regular category there is a unique exact functor $\mathcal{U}:
\mathbb{C}_{\tmop{reg} / \tmop{lex}} \rightarrow \mathbb{D}$ such that the
triangle

\begin{center}
  \raisebox{-0.5\height}{\includegraphics[width=14.8909550045914cm,height=3.09753377935196cm]{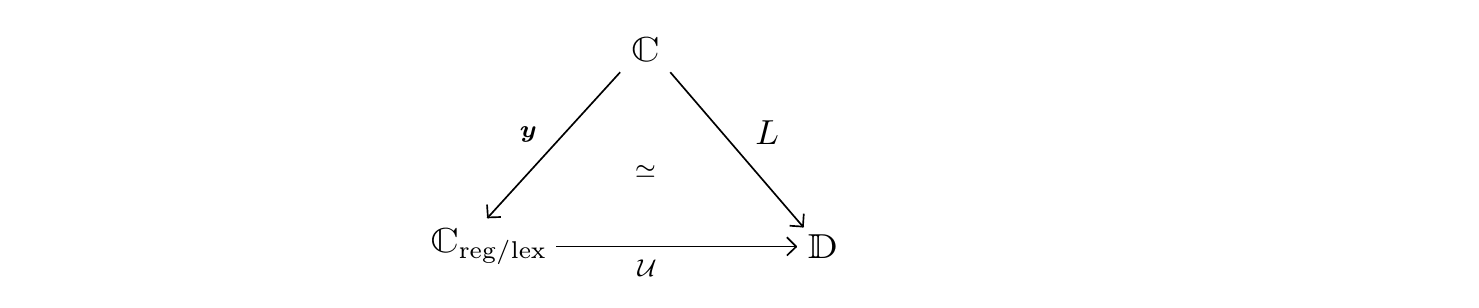}}
\end{center}

commutes (up-to equivalence). The objects of $\mathbb{C}_{\tmop{reg} /
\tmop{lex}}$ are morphisms of $\mathbb{C}$. A morphism
\[ [l] : \arobj{f}{X}{Y} \rightarrow \arobj{f'}{X'}{Y'} \]
in $\mathbb{C}_{\tmop{reg}}$ is an equivalence class $[l]$ of morphisms
$l : X \rightarrow X'$ such that $f' \circ l$ coequalises $f$'s kernel pair, with
$[l_1] = [l_2]$ if $f' \circ l_1 = f' \circ l_2$.

The insertion functor
$\y : \mathbb{C} \rightarrow \mathbb{C}_{\tmop{reg}}  \tmop{lex}$ is given by
$X \mapsto \arobj{\tmop{id}}{X}{X}$

Given a morphism
\[ [f] : \arobj{\tmop{id}}{X}{X} \rightarrow \arobj{\tmop{id}}{Y}{Y} \]
in the image of $\y$, it is obvious that $f$ is the only member of $[f]$ and
easy to see that
\begin{eqnarray*}
  \tmop{Im} ([f]) & = & \arobj{f}{X}{Y}
\end{eqnarray*}
(where $f$ is seen as an object of $\reglex{\mathbb{C}}$). It would be fair to
say that this is the idea behind the construction as the latter freely adds
images.

A frequent use-case is when $\mathbb{D} \in \Reg$ and $\mathbb{C} \subset
\mathbb{D}$ is a lex subcategory. The canonical functor $\mathcal{U}$ making
the diagram

\begin{center}
  \raisebox{-0.5\height}{\includegraphics[width=14.8909550045914cm,height=2.38013249376886cm]{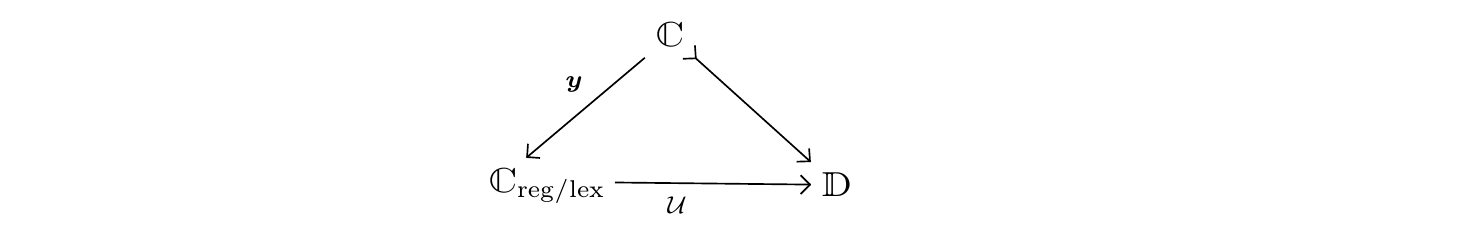}}
\end{center}

commute is particularly easy to describe in this situation:
\begin{eqnarray*}
  \mathcal{U}: \mathbb{C}_{\tmop{reg} / \tmop{lex}} & \longrightarrow &
  \mathbb{D}\\
  \arobj{f}{X}{Y} & \mapsto & \tmop{Im} (f)
\end{eqnarray*}
with action on morphisms given by universal property as follows. Assume a
morphism
\[ [l] : \arobj{f}{X}{Y} \rightarrow \arobj{f'}{X'}{Y'} \]
We then have the diagram

\begin{center}
  \raisebox{-0.499997244131378\height}{\includegraphics[width=14.8909550045914cm,height=2.97505903187721cm]{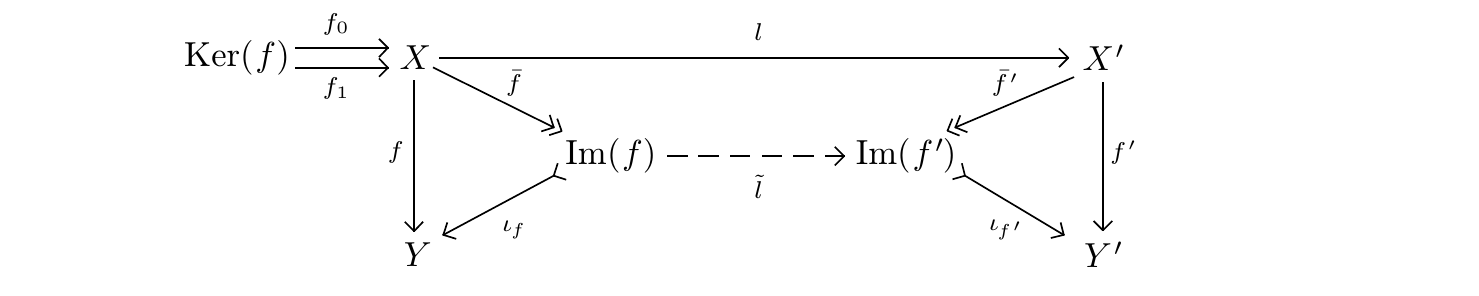}}
\end{center}

in $\mathbb{D}$. As $f' \circ l$ coequalises the kernel pair $(f_0, f_1)$ of
$f$ and $\iota_{f'}$ is mono, $\overline{f'}$ coequalises $(f_0, f_1)$ as well
so $\tilde{l}$ is given by universal property of $\bar{f}$. It is easy to see
that the assigment $[l] \mapsto \tilde{l}$ is well defined and functorial.
Moreover, $\mathcal{U}$ is always faithful in this situation. Finally, we have
$\mathbb{C}_{\tmop{reg} / \tmop{lex}} \simeq \mathbb{D}$ if and only if
$\mathcal{U}$ is (part of) an equivalence of categories.

\begin{definition}
  $\mathbb{D}$ is a {\tmem{regular completion}} of $\mathbb{C}$ if
  $\mathcal{U}$ is an equivalence of categories.
\end{definition}

In practice we thus need to check if $\mathcal{U}$ is essentially surjective
and full. It is well-known that if $\mathbb{D}$ is a {\tmem{regular
completion}} of $\mathbb{C}$, closing $\mathbb{C}$ under isomorphisms yields
$\mathbb{D}$'s subcategory of projective objects.

\section{Some Implicative Remarks}
\label{sec:implicative-remarks}

\subsection{Assemblies as Indexed Families.}

\begin{notation}
  Let $\mathcal{A}$ be an implicative algebra.
  \begin{enumeratenumeric}
    \item $\mathbb{P} [\mathcal{A}]$ stands for the implicative tripos over
    $\mathcal{A}$ and $\fibr{X}$ for its fibre over $X$.

    \item Let $Q \subseteq \sepa$ be a subset of $\sepa$. We shall
    write $\mathbb{P} [Q]$ for the full subcategory of $\mathbb{P}
    [\mathcal{A}]$ of families of elements of $Q$.

    \item $\mathbb{P} [Q]_X \assign \fibr{X} \cap \mathbb{P} [Q]$.
  \end{enumeratenumeric}
\end{notation}

\begin{definition}
  We call $\mathbb{P} [Q]_X$ {\tmem{the $Q$-fibre}} over $X$.
\end{definition}

\begin{proposition}
  \label{prop:tracked}The assignment
  \begin{eqnarray*}
    \Xi : \asma & \longrightarrow &
    \mathbb{P} [\sepa]\\
    X & \mapsto & \exi{X}\\
    {\scriptstyle f} \downarrow  &  & \downarrow {\scriptstyle \left( f, \exi{X} \sqsubseteq_X f^{\ast}
    \exi{Y} \right)}\\
    Y & \mapsto & \exi{Y}
  \end{eqnarray*}
  is an isomorphism of categories with inverse
  \begin{eqnarray*}
    \Xi^{- 1} : \mathbb{P} [\sepa] & \longrightarrow &
    \asma\\
    (u_x)_{x \in X} & \mapsto & (X, u)\\
    {\scriptstyle (f, u \sqsubseteq_X f^{\ast} v)} \downarrow &  & \text{\quad} \downarrow
    {\scriptstyle f}\\
    (v_y)_{y \in Y} & \mapsto & (Y, v)
  \end{eqnarray*}
\end{proposition}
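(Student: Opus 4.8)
The plan is to exhibit $\Xi$ and $\Xi^{-1}$ as mutually inverse functors by unwinding the definitions of the two categories involved, the whole point being that the category $\mathbb{P}[\sepa]$ is, essentially by construction, assembled from the very same data as $\asma$. An object of $\mathbb{P}[\sepa]$ is a family $(u_x)_{x \in X}$ of \emph{separators} indexed by a set $X$, that is, a map $u \colon X \to \sepa$, and this is exactly the datum of an assembly $(X, \exi{X})$ with $\exi{X} = u$. Hence on objects $\Xi$ and $\Xi^{-1}$ are manifestly a pair of inverse bijections, and I would begin by recording this, so that essential surjectivity and injectivity-on-objects are dispatched at once.

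The crux is the correspondence on morphisms. A morphism $(X, \exi{X}) \to (Y, \exi{Y})$ in $\asma$ is a function $f \colon X \to Y$ that is \emph{tracked}, whereas a morphism $(u_x)_x \to (v_y)_y$ in $\mathbb{P}[\sepa]$ is a pair $(f, \sigma)$ consisting of a function $f \colon X \to Y$ together with a witness $\sigma$ of the fibrewise inequality $u \sqsubseteq_X f^{\ast} v$. Since each fibre of the implicative tripos is a preorder, this second component carries no information beyond the fact that the inequality holds, so what genuinely needs checking is the equivalence
\[
   f \text{ is tracked} \quad\Longleftrightarrow\quad \exi{X} \sqsubseteq_X f^{\ast}\exi{Y}.
\]
Unwinding the implicative definitions, $f^{\ast}\exi{Y}$ is the reindexed family $\bigl(\exi{Y}(f(x))\bigr)_{x \in X}$, and $\exi{X} \sqsubseteq_X f^{\ast}\exi{Y}$ asserts precisely that the uniform implication $\bigsqcap_{x \in X}\bigl(\exi{X}(x) \to \exi{Y}(f(x))\bigr)$ lies in $\sepa$; because $\sepa$ is upward closed, this is in turn equivalent to the existence of a single element of $\sepa$ tracking $f$, i.e.\ to $f$ being a morphism of assemblies. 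This is the one step that uses the specifics of implicative algebras --- the definition of the fibrewise order and of tracking --- rather than pure bookkeeping, and it is where I expect the real (though modest) content to sit; I do not anticipate a serious obstacle, since it amounts to matching two formulations of one and the same condition.

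It then remains to check that $\Xi$ and $\Xi^{-1}$ are functorial and mutually inverse on morphisms, which I expect to be routine. Identities are tracked by $\ilam{x}{x}$, a composite of tracked maps is tracked by composing realizers, and on the $\mathbb{P}[\sepa]$ side composition is composition of underlying functions with the witnesses supplied by transitivity of $\sqsubseteq_X$; so both assignments respect identities and composition. For mutual inverseness on morphisms, observe that neither $\Xi$ nor $\Xi^{-1}$ alters the underlying function $f$, so both round trips are the identity by construction, using once more that the second component of a $\mathbb{P}[\sepa]$-morphism is uniquely determined. Packaging these observations yields the stated \emph{isomorphism} of categories; it is strict rather than merely an equivalence precisely because no quotient is taken on either side, objects being honest families and existence predicates alike.
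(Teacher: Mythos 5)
Your proposal is correct and follows essentially the same route as the paper: inverse bijections on objects are noted as immediate, and the whole content is the chain of equivalences identifying ``$f$ is tracked'' with $\exi{X} \sqsubseteq_X f^{\ast}\exi{Y}$ via the uniform implication $\bigM_{x \in X}\left(\exi{X}(x) \rightarrow \exi{Y}(f(x))\right) \in \sepa$. Your additional remarks on functoriality and on the witness component carrying no extra information are fine elaborations of what the paper leaves implicit.
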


\begin{proof}
  $\Xi_0$ and $\Xi^{- 1}_0$ are obviously inverse bijections (on classes).
  Furthermore, the maps
  \begin{eqnarray*}
    \Xi_{X, Y} : \asma (X, Y) &
    \longrightarrow & \mathbb{P} [\sepa] \left( \exi{X}, \exi{Y}
    \right)\\
    f & \mapsto & \left( f, \exi{X} \sqsubseteq_X f^{\ast} \exi{Y} \right)
  \end{eqnarray*}
  and
  \begin{eqnarray*}
    \Xi^{- 1}_{u, v} : \mathbb{P} [\sepa] ((u_x)_{x \in X}, (v_y)_{y \in
    Y}) & \longrightarrow & \asma (X,
    Y)\\
    (f, u \sqsubseteq_X f^{\ast} v) & \mapsto & f
  \end{eqnarray*}
  are inverse bijections since
  \begin{eqnarray*}
    \text{$f$ is tracked} & \Longleftrightarrow & \bigM_{x \in X}
    \left( \exi{X} (x) \rightarrow \exi{Y} (f (x)) \right) \in \sepa\\
    & \Longleftrightarrow & \bigM \left( \exi{X} \rightarrow_X
    f^{\ast}  \exi{Y} \right) \in \sepa\\
    & \Longleftrightarrow & \exi{X} \sqsubseteq_X f^{\ast} \exi{Y}
  \end{eqnarray*}

\end{proof}

The ismorphism above is obviously rather shallow. Still, it is quite practical
to be able to take either point of view according to a given situation.

\subsection{Stability under finite Limits}

\begin{definition}
  Let $M \subseteq \sepa$. The category $\mass$ of
  {\tmem{$M$-assemblies}} is the full subcategory of
  $\asma$ with objects assemblies
  valued in $M$.
\end{definition}

\begin{remark}
  $\asm{M} \cong \mathbb{P} [M]$
\end{remark}

Recall that in an implicative algebra, the product $\sqcap$ (with respect to
the entailment preorder) is given by the second-order encoding of pairs.

\begin{definition}
  A subset $M \subseteq \sepa$ is {\tmem{algebraic}} if if it is closed under
  $\sqcap$.
\end{definition}

\begin{remark}
  $\mass$ is closed under finite limits if $M$ is algebraic.
\end{remark}

\begin{example}
  \label{ex:alg}
  $\mathcal{A}= \left( \pows{P}, \subseteq, \rightarrow, \sepa = \ppows{P} \right)$ with $P$
  a combinatory algebra. The subset
  \begin{eqnarray*}
    M & \assign & \{ \{ p \} |p \in P \}
  \end{eqnarray*}
  of the separator is manifestly algebraic as in this case we have $\{ p_1 \}
  \sqcap \{ p_2 \} = \{ \langle p_1, p_2 \rangle \}$.
\end{example}

\subsection{Implicative Existence}

Before we proceed further we need to compile some further useful facts about
the operation of existence in implicative algebras.

\begin{remark}
  \label{rem:eta}Recall that the implicative existence is given by
  second-order encoding as
  \begin{eqnarray*}
    \siex U & = & \bigM_{c \in \mathcal{A}} \bigM_{u \in U}
    ((u \rightarrow c) \rightarrow c)
  \end{eqnarray*}
  \begin{enumeratenumeric}
    \item We have $\vdash \lam{z}{z \nocomma u} : \forall c : \mathcal{A}.
    \forall u : U. (u \rightarrow c) \rightarrow c$, hence $\intpr{\lam{z}{z
    \nocomma u}} \preccurlyeq \siex U$ \ for all $u \in U$, so in particular
    $\siex U \in \sepa$ if $U \cap \sepa \neq \varnothing$.

    \item A consequence of the above item is that if we have $\chi \assign a
    \rightarrow \siex U$ then for any $a' \preccurlyeq a$ there is an $u \in
    U$ such that $\chi a' \preccurlyeq \ilam{z}{z u}$. We systematically
    exploit this fact, in particular when building trackers.
  \end{enumeratenumeric}
\end{remark}

\subsection{Valuations}

\begin{definition}
  Let $X$ be a set and $\varnothing \neq M \subseteq \sepa$. An
  {\tmem{$M$-valuation of $X$}} (or just {\tmem{valuation}} if $M$ and $X$ are
  understood) is map $\nu : X \rightarrow \ppows{M}$.
\end{definition}

\begin{remark}
  \label{rem:fib-val}Let $X$ be a set and $M \subseteq \sepa$. We have the
  predicate
  \begin{eqnarray*}
    \tmop{ex} : \ppows{M} & \longrightarrow & \sepa\\
    U & \mapsto &  \siex U
  \end{eqnarray*}
  in $\mathbb{P} [\sepa]_{\ppows{M}}$. Given a valuation $\nu : X
  \rightarrow \ppows{M}$, we have the predicate $\nu^{\ast} (\tmop{ex}) \in
  \mathbb{P} \left[ \sepa \right]_X$ where
  \begin{eqnarray*}
    \nu^{\ast} (\tmop{ex})_x & = & (\tmop{ex} \circ \nu) (x)\\
    & = & \tmop{ex} (\nu (x))\\
    & = & \siex (\nu (x))
  \end{eqnarray*}
  so
  \begin{eqnarray*}
    \nu^{\ast} (\tmop{ex}) & = & {\left( \siex (\nu (x)) \right)_{x \in X}}
  \end{eqnarray*}
\end{remark}

\begin{notation}
  Let $\nu : X \rightarrow \ppows{M}$ be a valuation. We shall use the
  notation $\siex \nu : = \nu^{\ast} (\tmop{ex})$.
\end{notation}

\begin{lemma}
  \label{lem:induced-valuation}Any morphism $f : X \rightarrow Y$ in $\asm{M}$
  gives rise to the valuation
  \begin{eqnarray*}
    \nu_f : \car{\tmop{Im} (f)} & \longrightarrow & \ppows{M}\\
    {}[x] & \mapsto & \left\{ \exi{X} (x') |x' \in [x] \right\}
  \end{eqnarray*}
  and we have $\exi{\tmop{Im} (f)} = \siex \nu_f$.
\end{lemma}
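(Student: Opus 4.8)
The plan is to check that $\nu_f$ is a genuine valuation and then to compare the two existence predicates class by class; the only substantive point is that the second-order existence is insensitive to the indexing of its arguments.

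First I would verify well-definedness. For a class $[x] \in \car{\tmop{Im} (f)}$, the set $\nu_f ([x]) = \{ \exi{X} (x') \mid x' \in [x] \}$ is non-empty, since $[x] = f^{-1} (f (x))$ always contains $x$, and it is contained in $M$ because $X$ is an object of $\asm{M}$ and hence $\exi{X}$ is $M$-valued. Thus $\nu_f ([x]) \in \ppows{M}$, so $\nu_f$ is an $M$-valuation of $\car{\tmop{Im} (f)}$; by the first item of Remark \ref{rem:eta}, non-emptiness moreover guarantees $\siex (\nu_f ([x])) \in \sepa$, so everything in sight is a legitimate existence predicate.

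Next I would compute both sides pointwise. On one hand, the image factorisation in $\asma$ recalled above gives $\exi{\tmop{Im} (f)} ([x]) = \iex{x' \in [x]} \exi{X} (x')$. On the other hand, unwinding the defining notation $\siex \nu_f = \nu_f^{\ast} (\tmop{ex})$ together with Remark \ref{rem:fib-val} yields $(\siex \nu_f) ([x]) = \siex (\nu_f ([x])) = \siex \{ \exi{X} (x') \mid x' \in [x] \}$. The statement therefore reduces, for each fixed $[x]$, to the identity $\iex{x' \in [x]} \exi{X} (x') = \siex \{ \exi{X} (x') \mid x' \in [x] \}$.

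The final and only real step is to expand both quantifiers through the second-order encoding of Remark \ref{rem:eta}: the left-hand side becomes $\bigM_{c \in \mathcal{A}} \bigM_{x' \in [x]} ((\exi{X} (x') \rightarrow c) \rightarrow c)$ and the right-hand side becomes $\bigM_{c \in \mathcal{A}} \bigM_{u \in \nu_f ([x])} ((u \rightarrow c) \rightarrow c)$. For each fixed $c$, the assignment $x' \mapsto (\exi{X} (x') \rightarrow c) \rightarrow c$ factors through $x' \mapsto \exi{X} (x')$, so its range over $x' \in [x]$ is exactly $\{ (u \rightarrow c) \rightarrow c \mid u \in \nu_f ([x]) \}$. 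Since $\bigM$ is an infimum for the entailment preorder, and in particular idempotent, a meet depends only on the underlying set of its operands and not on the indexing or on repetitions; hence the two inner meets coincide for every $c$, and meeting over $c$ gives the asserted equality. I anticipate no genuine obstacle here: the content is precisely that the existence operator is built from an infimum, which collapses the distinction between quantifying over the fiber $[x]$ and quantifying over its set of existence values. The only care required is the bookkeeping of non-emptiness and $M$-valuedness, which keeps $\siex$ inside $\sepa$.
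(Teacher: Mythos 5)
Your proof is correct and takes essentially the same route as the paper's: both sides are computed pointwise, with $\exi{\tmop{Im}(f)}([x]) = \iex{x' \in [x]} \exi{X}(x')$ coming from the image factorisation in $\asma$ and $(\siex \nu_f)_{[x]} = (\nu_f^{\ast}\tmop{ex})_{[x]}$ from the definition of $\siex\nu_f$ as a reindexed predicate. The paper treats as immediate the identification of the indexed existential with the existential of the set of values, which you justify explicitly (meets depend only on the underlying set of operands), and you additionally check well-definedness of $\nu_f$; this is added detail, not a different argument.
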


\begin{notation}
  $[x] \assign f^{- 1} (f (x))$.
\end{notation}

\begin{proof}
  We have $\car{\tmop{Im} (f)} = \{ [x] |x \in X \} $ where $\exi{\tmop{Im}
  (f)} ([x]) = \iex{x' \in [x]} \exi{X} (x')$, hence
  \begin{eqnarray*}
    (\siex \nu_f)_{[x]} & = & (\nu_f^{\ast} \tmop{ex})_{[x]}\\
    & = & \iex{x' \in [x]} \exi{X} (x')
  \end{eqnarray*}

\end{proof}

\begin{proposition}
  \label{prop:induced-valuation}Let $X$ be a set and $\nu : X \rightarrow
  \ppows{M}$ be a valuation. Let $\hat{X} \in \mass$ be the assembly given by
  \begin{itemizeminus}
    \item $\car{\hat{X}} \assign \sum_{x \in X} \nu (x)$;

    \item $\exi{\hat{X}} (x, m) \assign m$ for all $x \in X$ and $m \in \nu(x)$.
  \end{itemizeminus}
  and $\check{X} \in \mass$ be the assembly given by
  \begin{itemizeminus}
    \item $\car{\check{X}} \assign X$;

    \item $\exi{\check{X}} (x) \assign m_0$ for all $x \in X$ and some $m_0
    \in M$.
  \end{itemizeminus}
  The obvious surjective map $g_{\nu} : \car{\hat{X}}
  \twoheadrightarrow \car{\check{X}}$ is tracked by
  $(\lambda x.m_0)^{\mathcal{A}}$
  and we have the isomorphism
  \begin{eqnarray*}
    {}[-]_{\nu} : \left( X, \siex \nu \right) & \longrightarrowlim^{\cong} &
    \left( \tmop{Im} (g_{\nu}), \siex g_{\nu} \right)\\
    x & \mapsto & g_{\nu}^{- 1} (x)
  \end{eqnarray*}
  in $\asma$.
\end{proposition}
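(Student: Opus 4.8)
The plan is to read everything off the explicit description of image factorisations in $\asma$ recalled in Section~\ref{sec:regular} together with Lemma~\ref{lem:induced-valuation}, applied to the single morphism $f = g_{\nu}$. First I would confirm that $\hat{X}$ and $\check{X}$ are genuine objects of $\mass$: the predicate of $\hat{X}$ takes values $\exi{\hat{X}}(x,m) = m \in \nu(x) \subseteq M$, and that of $\check{X}$ the constant value $m_0 \in M$, so both are valued in $M$. The map $g_{\nu} : (x,m) \mapsto x$ is then checked to be tracked by $(\lam{x}{m_0})^{\mathcal{A}}$: one computes $(\lam{x}{m_0})^{\mathcal{A}} \cdot m \preccurlyeq m_0 = \exi{\check{X}}(g_{\nu}(x,m))$ for every $m$, and the tracker lies in $\sepa$ because $m_0 \in M \subseteq \sepa$ and separators are closed under the abstraction $m_0 \mapsto \lam{x}{m_0}$. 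Thus $g_{\nu}$ is a bona fide morphism of $\mass$, and it is surjective on carriers precisely because each $\nu(x)$ is non-empty.

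Next I would compute $\tmop{Im}(g_{\nu})$ directly (all predicates in sight land in $\sepa$ by Remark~\ref{rem:eta}, so the objects are legitimate). Since $g_{\nu}$ is surjective, $g_{\nu}^{-1}(g_{\nu}(x,m)) = g_{\nu}^{-1}(x) = \{(x,m') \mid m' \in \nu(x)\}$, so the carrier of the image is $\{ g_{\nu}^{-1}(x) \mid x \in X\}$ and $x \mapsto g_{\nu}^{-1}(x)$ is a bijection $X \xrightarrow{\cong} \car{\tmop{Im}(g_{\nu})}$. For the existence predicate I invoke Lemma~\ref{lem:induced-valuation}: the induced valuation sends $g_{\nu}^{-1}(x)$ to $\{ \exi{\hat{X}}(x,m') \mid m' \in \nu(x) \} = \nu(x)$, whence
\begin{eqnarray*}
  \exi{\tmop{Im}(g_{\nu})}(g_{\nu}^{-1}(x)) & = & \iex{m \in \nu(x)} \exi{\hat{X}}(x,m) \;=\; \siex(\nu(x)) \;=\; (\siex \nu)_x .
\end{eqnarray*}
In other words, under the carrier bijection the existence predicate $\siex g_{\nu} = \siex \nu_{g_{\nu}} = \exi{\tmop{Im}(g_{\nu})}$ of the image coincides on the nose with $\siex \nu$; this is the whole point of rigging the carrier of $\hat{X}$ to be $\sum_{x} \nu(x)$.

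Finally I would conclude that $[-]_{\nu}$ is an isomorphism. Because the two existence predicates are literally equal (not merely related by $\heyo$), both $[-]_{\nu}$ and its set-theoretic inverse $g_{\nu}^{-1}(x) \mapsto x$ are tracked by the identity combinator, so by Proposition~\ref{prop:tracked} they are mutually inverse morphisms of $\asma$. The only genuinely delicate points are the two verifications in the first paragraph — that $(\lam{x}{m_0})^{\mathcal{A}}$ both tracks $g_{\nu}$ and belongs to $\sepa$ — together with the careful identification of the induced valuation $\nu_{g_{\nu}}$ with $\nu$; everything else is bookkeeping forced by the definitions. I expect no substantial obstacle, since $\hat{X}$ was constructed exactly so that the fibres of $g_{\nu}$ reproduce the sets $\nu(x)$.
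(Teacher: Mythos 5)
Your proposal is correct and follows essentially the same route as the paper's proof: compute the image factorisation of $g_{\nu}$, identify the carrier bijection $x \mapsto g_{\nu}^{-1}(x)$, observe that the induced valuation satisfies $\nu_{g_{\nu}}(g_{\nu}^{-1}(x)) = \nu(x)$ so that Lemma \ref{lem:induced-valuation} yields $\exi{\tmop{Im}(g_{\nu})}([x]_{\nu}) = \left(\siex \nu\right)_x$, and conclude that $[-]_{\nu}$ and its inverse are both tracked because the two predicates coincide. The only (harmless) difference is that you also verify explicitly that $(\lam{x}{m_0})^{\mathcal{A}}$ tracks $g_{\nu}$ and lies in $\sepa$, a point the statement asserts but the paper's proof leaves implicit.
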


\begin{proof}
  Consider the image factorisation

  \begin{center}
    \raisebox{-0.5\height}{\includegraphics[width=14.8909550045914cm,height=2.38013249376886cm]{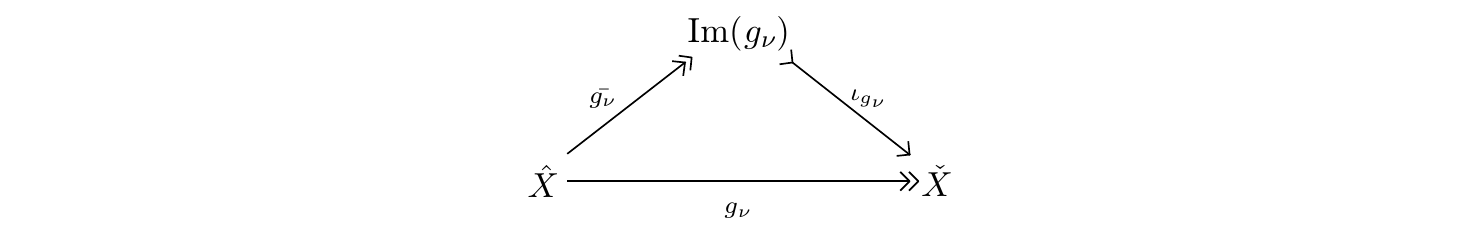}}
  \end{center}

  We have $\tmop{Im} (g_{\nu}) = \{ g_{\nu}^{- 1} (x) |x \in X \}$ as
  \begin{eqnarray*}
    {}[x_m] & = & g_{\nu}^{- 1} (g_{\nu} (x_m))\\
    & = & g_{\nu}^{- 1} (x)
  \end{eqnarray*}
  for all $m \in \nu (x)$, hence the bijection
  \begin{eqnarray*}
    {}[-]_{\nu} : X & \longrightarrowlim^{\cong} & \tmop{Im} (g_{\nu})\\
    x & \mapsto & g_{\nu}^{- 1} (x)
  \end{eqnarray*}
  We have
  \begin{eqnarray*}
    \nu_{g_{\nu}} (g_{\nu}^{- 1} (x)) & = & \left\{ \exi{\hat{X}} (x_m) |x_m
    \in g_{\nu}^{- 1} (x) \right\}\\
    & = & \{ m \in M|x_m \in g_{\nu}^{- 1} (x) \}\\
    & = & \nu (x)
  \end{eqnarray*}
  hence
  \begin{eqnarray*}
    \exi{\tmop{Im} (g_{\nu})} ([x]_{\nu}) & = & {\left( \siex \nu_{g_{\nu}}
    \right)_{g_{\nu}^{- 1} (x)}}   \text{\qquad Lemma
    \ref{lem:induced-valuation}}\\
    & = & \siex (\nu_{g_{\nu}} (g_{\nu}^{- 1} (x)))\\
    & = & \siex \nu (x)
  \end{eqnarray*}
  and therefore
  \begin{eqnarray*}
    \bigM_{x \in X} \left( \left( \siex \nu \right)_x \rightarrow
    \exi{\tmop{Im} (g_{\nu})} ([x]_{\nu}) \right) & = & \bigM_{x \in
    X} \left( \left( \siex \nu \right)_x \rightarrow \left( \siex \nu
    \right)_x \right)\\
    & = & \bigM_{x \in X} \left( \exi{\tmop{Im} (g_{\nu})}
    ([x]_{\nu}) \rightarrow \left( \siex \nu \right)_x \right)
  \end{eqnarray*}
  But $\bigM_{x \in X} \left( \left( \siex \nu \right)_x \rightarrow
  \left( \siex \nu \right)_x \right) \in \sepa$, hence $[-]_{\nu}$ and \
  $[-]_{\nu}^{- 1}$ are tracked.

\end{proof}

\section{Regular Completions of Implicative Assemblies}
\label{sec:regular-completion-asm}

\subsection{Density}

\begin{definition}
  A non-empty subset $M \subseteq \sepa$ of the separator is
  {\tmem{dense}} in $\sepa$ (or just {\tmem{dense}} if $\sepa $ is understood)
  if there is a valuation $\nu : \sepa \rightarrow \ppows{M}$ such that
  $\tmop{id}_{\sepa} \cong_{\sepa} \siex \nu$.
\end{definition}

\begin{theorem}
  \label{th:dense}Let $M \subseteq \sepa$ be an algebraic subset. The
  following are equivalent
  \begin{enumerateroman}
    \item $M$ is dense;
    \item for any $X \in \ens$ and any $u \in \mathbb{P} \left[ \sepa
    \right]_X$ there is a valuation $\nu : X \rightarrow \ppows{M}$ such that
    $u \cong_X \siex \nu$;
    \item the functor $U : \reglex{\left( \mass \right)} \rightarrow
    \asma$ is essentially surjective.
  \end{enumerateroman}
\end{theorem}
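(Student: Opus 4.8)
The plan is to prove the cycle $(i) \Rightarrow (ii) \Rightarrow (iii) \Rightarrow (i)$, treating $(ii)$ as the ``internal'' reformulation inside the tripos $\mathbb{P}[\sepa]$ that mediates between density, a statement about the single generic predicate $\tmop{id}_\sepa$, and essential surjectivity of $U$, a statement about all assemblies at once. Throughout I would freely invoke the isomorphism $\Xi : \asma \cong \mathbb{P}[\sepa]$ of Proposition \ref{prop:tracked}, under which an assembly $(X,u)$ is the family $u \in \mathbb{P}[\sepa]_X$, and an isomorphism $(X,u) \cong (Y,v)$ in $\asma$ is a carrier-bijection $\phi$ together with the fibrewise equivalence $u \cong_X \phi^\ast v$; in the special case $\phi = \tmop{id}_X$ this is just $u \cong_X v$.

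For $(i) \Rightarrow (ii)$ I would simply reindex the density valuation. Given a valuation $\nu_0 : \sepa \to \ppows{M}$ witnessing density, and an arbitrary $u \in \mathbb{P}[\sepa]_X$ regarded as a map $u : X \to \sepa$, I set $\nu := \nu_0 \circ u : X \to \ppows{M}$. The defining identity $(\siex\nu)_x = \siex(\nu(x))$ immediately yields $\siex\nu = u^\ast(\siex\nu_0)$, while trivially $u = u^\ast(\tmop{id}_\sepa)$. Since reindexing $u^\ast$ is a monotone map between the fibres and hence preserves the equivalence $\cong$ in both directions, the density hypothesis $\tmop{id}_\sepa \cong_\sepa \siex\nu_0$ transports to $u \cong_X \siex\nu$, which is exactly $(ii)$.

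For $(ii) \Rightarrow (iii)$ I would unfold essential surjectivity through $\Xi$ and then appeal to Proposition \ref{prop:induced-valuation}. An arbitrary object of $\asma$ has the form $(X,u)$; by $(ii)$ there is a valuation $\nu : X \to \ppows{M}$ with $u \cong_X \siex\nu$, so $(X,u) \cong (X,\siex\nu)$ in $\asma$. Proposition \ref{prop:induced-valuation} exhibits this last assembly as $\tmop{Im}(g_\nu)$ for the explicit morphism $g_\nu : \hat{X} \to \check{X}$ of $M$-assemblies, that is, as the image under $U$ of the object $g_\nu$ of $\reglex{(\mass)}$. Hence $(X,u)$ lies in the essential image of $U$.

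The step needing the most care is $(iii) \Rightarrow (i)$, where an arbitrary $M$-morphism is handed to us and must be recognised as valuation-induced; this is precisely the content of Lemma \ref{lem:induced-valuation}. I would apply essential surjectivity to the generic assembly $(\sepa, \tmop{id}_\sepa)$, obtaining a morphism $f$ in $\mass$ with $\tmop{Im}(f) \cong (\sepa, \tmop{id}_\sepa)$. By Lemma \ref{lem:induced-valuation} we have $\tmop{Im}(f) = (\car{\tmop{Im}(f)}, \siex\nu_f)$ for the valuation $\nu_f$, which indeed lands in $\ppows{M}$ because the domain of $f$ is valued in $M$ and its fibres $[a]$ are non-empty. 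Transporting $\nu_f$ back along the carrier-bijection $\phi : \sepa \to \car{\tmop{Im}(f)}$ underlying the isomorphism, I set $\nu := \nu_f \circ \phi$; the identity $\phi^\ast(\siex\nu_f) = \siex\nu$ together with $\tmop{id}_\sepa \cong_\sepa \phi^\ast(\siex\nu_f)$, read off from the isomorphism via $\Xi$, gives $\tmop{id}_\sepa \cong_\sepa \siex\nu$, so $M$ is dense. The only points demanding vigilance are the bookkeeping of variances under reindexing (using $(\phi^{-1}\circ\phi)^\ast = \phi^\ast \circ (\phi^{-1})^\ast = \mathrm{id}$ to convert the two tracking conditions into a genuine equivalence over $\sepa$) and the verification that $\nu_f \in \ppows{M}$, both routine once we recall that $M$ is algebraic and that $f$ lives in $\mass$.
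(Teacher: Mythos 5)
Your proposal is correct and follows essentially the same route as the paper's proof: the cycle $(i) \Rightarrow (ii) \Rightarrow (iii) \Rightarrow (i)$, with $(i) \Rightarrow (ii)$ by reindexing the density valuation along $u$ (the paper phrases this via the split generic object $\tmop{id}_{\mathcal{A}}$, but the substance is your $u = u^{\ast}(\tmop{id}_{\sepa})$ argument), $(ii) \Rightarrow (iii)$ via Proposition \ref{prop:induced-valuation} exhibiting $(X, \siex\nu)$ as $\tmop{Im}(g_\nu) = U(g_\nu)$, and $(iii) \Rightarrow (i)$ by applying essential surjectivity to $(\sepa, \tmop{id}_{\sepa})$ and transporting $\nu_f$ along the witnessing iso, exactly as the paper does with its iso $k$. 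Your explicit attention to the variance bookkeeping ($(\phi^{-1}\circ\phi)^{\ast} = \mathrm{id}$) and to $\nu_f$ landing in $\ppows{M}$ matches, and slightly sharpens, the paper's computation of $\Xi(k)$ and $\Xi(k^{-1})$.
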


\begin{proof}
  We need to insist that $M$ is algebraic only in order to be able to talk
  about $\reglex{\left( \mass \right)}$, otherwise it is {\tmem{deus ex
  machina}} behind the stage.

  $(i) \Longrightarrow (i \nocomma i)$ There is a valuation $\nu : \sepa
  \rightarrow \ppows{M}$ of $\sepa$ such that $\tmop{id}_{\sepa} \cong_{\sepa}
  \siex \nu$. Let $u \in \sfibr{X}$. We have
  \begin{eqnarray*}
    \tmop{id}_{\sepa} & = & i_s^{\ast} (\tmop{id}_{\mathcal{A}})
  \end{eqnarray*}
  with $i_{\sepa} : \sepa \rightarrowtail \mathcal{A}$ the insertion map,
  hence
  \begin{eqnarray*}
    u & = & u^{\ast} (\tmop{id}_{\mathcal{A}}) \hspace{4em}
    \tmop{id}_\mathcal{A} \in \fibr{\mathcal{A}}
    \text{is a split generic object}\\
    & = & \left( i_{\sepa} \circ u \right)^{\ast} (\tmop{id}_{\mathcal{A}})\\
    & = & u^{\ast} (i_s^{\ast} (\tmop{id}_{\mathcal{A}})) \hspace{3em}
    \mathbb{P} [\mathcal{A}] \rightarrow \ens \text{ is a split fibration}\\
    & = & u^{\ast} (\tmop{id}_{\sepa})\\
    & \cong_X & u^{\ast} \left( \siex \nu \right) \qquad
    \text{functoriality}\\
    & = & \siex (\nu \circ u)
  \end{eqnarray*}
  so the valuation $\nu \circ u : X \rightarrow \ppows{M}$ witnesses the fact
  that $M$ is dense.

  $(i \nocomma i) \Longrightarrow (\nocomma i \nocomma i \nocomma i)$ Let $X
  \in \asm{M}$. Assume a valuation $\nu : X \rightarrow \ppows{M}$ such that
  $\exi{X} \cong_X \siex \nu$. There is a

  morphism $g_{\nu}$ such that $\Xi (\tmop{Im} (g_{\nu})) = \siex \nu$
  (c.f. Lemma \ref{lem:induced-valuation}). We furthermore have $\Xi (X) =
  \exi{X}$, hence the isomorphism
  \begin{eqnarray*}
    \Xi^{- 1} \left( \exi{X} \cong_X \siex \nu \right) : X &
    \longrightarrowlim^{\cong} & \tmop{Im} (g_{\nu})
  \end{eqnarray*}
  $(i \nocomma i \nocomma i) \Longrightarrow (i \nocomma)$ We have $\Xi^{- 1}
  \left( \tmop{id}_{\sepa} \right) = \left( \sepa, \tmop{id}_{\sepa} \right)$.
  There is by hypothesis a morphism $f : X \rightarrow Y$ in $\asm{M}$ such
  that $\sepa \cong \tmop{Im} (f)$. Let $k : \sepa
  \longrightarrowlim^{\cong} \tmop{Im} (f)$ be a witnessing iso. We have
  \begin{eqnarray*}
    \Xi (k) & = & \left( k, \tmop{id} \text{ } \nocomma \sqsubseteq_{\sepa}
    \text{ } k^{\ast} \siex \nu_f \right)\\
    & = & \left( k, \tmop{id} \text{ } \nocomma \sqsubseteq_{\sepa} \text{ }
    \siex (\nu_f \circ k) \right)
  \end{eqnarray*}
  and
  \begin{eqnarray*}
    \Xi (k^{- 1}) & = & \left( k^{- 1}, \siex \nu_f \text{ }
    \sqsubseteq_{\tmop{Im} (f)} \text{ } (k^{- 1})^{\ast} \tmop{id}_{\sepa}
    \right)\\
    & = & \left( k^{- 1}, \siex \nu_f \text{ } \sqsubseteq_{\tmop{Im} (f)}
    \text{ } k^{- 1} \right)
  \end{eqnarray*}
  Notice that $k^{- 1} : \tmop{Im} (f) \rightarrow \sepa$ has the ``right
  type'' here in the sense that $k^{- 1} \in \mathbb{P} \left[ \sepa
  \right]_{\tmop{Im} (f)}$. Reindexing the vertical morphism $\siex \nu_f
  \text{ } \sqsubseteq_{\tmop{Im} (f)} \text{ } k^{- 1}$ by $k$ yields
  \begin{eqnarray*}
    k^{\ast} \left( \siex \nu_f \text{ } \sqsubseteq_{\tmop{Im} (f)} \text{ }
    k^{- 1} \right) & = & k^{\ast} \left( \siex \nu_f \right) \text{ }
    \sqsubseteq_{\sepa} \text{ } k^{\ast} (k^{- 1})\\
    & = & \siex (\nu_f \circ k) \text{ } \sqsubseteq_{\sepa} \text{ }
    \tmop{id}_{\sepa}
  \end{eqnarray*}
  We thus have $\tmop{id}_{\sepa} \text{ } \cong_{\sepa} \text{ } \siex
  (\nu_f \circ k)$ by virtue of the valuation $\nu_f \circ k$.
\end{proof}

\begin{example}
  \label{ex:single-dense} In example \ref{ex:alg} we considered an implicative
  algebra $\mathcal{A}= \left( \pows{P}, \subseteq, \rightarrow, \sepa
  \right)$ constructed from an applicative structure $P$ and showed that the
  set $M \assign \{ \{ p \} |p \in P \}$ of all singletons is algebraic. Now
  $\sepa = \ppows{P}$ here so we have the {\tmem{trivial valuation}}
  \begin{eqnarray*}
    \nu : \sepa & \longrightarrow & \ppows{M}\\
    U & \mapsto & \{ \{ m \} |m \in U \}
  \end{eqnarray*}
  which yields $\siex \nu_U = U$, hence
  \begin{eqnarray*}
    \bigM_{U \in \sepa} \left( U \rightarrow \left( \siex \nu
    \right)_U \right) & = & \bigcap_{U \in \sepa} (U \rightarrow U)\\
    & = & \bigM_{U \in \sepa} \left( \left( \siex \nu \right)_U
    \rightarrow U \right)
  \end{eqnarray*}

  But $\bigcap_{U \in \sepa} (U \rightarrow U) \neq \varnothing$ as it
  contains $\tmop{id}$, hence $\tmop{id} \cong_{\sepa} \siex \nu$.
\end{example}

\begin{definition}
  An implicative structure $\mathcal{A}$ is {\tmem{compatible with joins}}
  provided
  \begin{eqnarray*}
    \bigM_{a \in A} (a \rightarrow b) & = & \left( \bigcurlyvee_{a
    \in A} a \right) \rightarrow b
  \end{eqnarray*}
  for all $A \subseteq \mathcal{A}$ and $b \in \mathcal{A}$. An implicative
  algebra is compatible with joins if it is the case for the underlying
  implicative structure.
\end{definition}

\begin{remark}
  In an implicative algebra $\mathcal{A}$ compatible with joins the operations
  $\siex$ and $\bigcurlyvee$are the same.
\end{remark}

\begin{proposition}
  et $\mathcal{A}$ be an implicative algebra compatible with joins. An
  algebraic subset $M \subseteq \sepa$ is dense provided any element of
  $\sepa$ is a join of elements of $M$.
\end{proposition}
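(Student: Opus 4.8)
The plan is to produce an explicit valuation realising the density condition and to check it \emph{on the nose}, the whole argument being powered by the Remark immediately preceding the statement. That Remark tells us that, in the join-compatible setting, $\siex$ and $\bigcurlyvee$ coincide; hence for \emph{any} valuation $\nu : \sepa \to \ppows{M}$ we have
\[ (\siex \nu)_U \; = \; \siex (\nu (U)) \; = \; \bigcurlyvee \nu (U) \qquad \text{for every } U \in \sepa . \]
So it suffices to exhibit a $\nu$ with $\bigcurlyvee \nu (U) = U$ pointwise: then $\siex \nu = \tmop{id}_{\sepa}$ literally as objects of $\sfibr{\sepa}$, and a fortiori $\tmop{id}_{\sepa} \cong_{\sepa} \siex \nu$, which is exactly the density witness demanded by the definition.

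For this I would take the canonical choice
\[ \nu : \sepa \longrightarrow \ppows{M}, \qquad U \longmapsto \{ m \in M \mid m \preccurlyeq U \} , \]
and compute $\bigcurlyvee \nu (U)$ using that $\bigcurlyvee$ is the supremum for $\preccurlyeq$. By hypothesis $U$ is a join of elements of $M$, say $U = \bigcurlyvee_{m \in S} m$ with $S \subseteq M$; since every $m \in S$ satisfies $m \preccurlyeq \bigcurlyvee S = U$ we have $S \subseteq \nu (U)$. On the one hand each element of $\nu (U)$ lies below $U$, so $\bigcurlyvee \nu (U) \preccurlyeq U$; on the other hand $S \subseteq \nu (U)$ forces $U = \bigcurlyvee S \preccurlyeq \bigcurlyvee \nu (U)$. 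Hence $(\siex \nu)_U = \bigcurlyvee \nu (U) = U = \tmop{id}_{\sepa} (U)$ for all $U$, and the conclusion follows.

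Everything here is bookkeeping once the Remark has reduced $\siex$ to $\bigcurlyvee$; the one point that genuinely demands attention is checking that $\nu$ is a \emph{bona fide} valuation, i.e. that $\nu (U) \in \ppows{M}$ is non-empty. This is where the hypotheses are really used: because $U \in \sepa$ is a non-trivial join of $M$-elements (the empty join being $\bot \notin \sepa$), the witnessing family $S$ above is non-empty, whence $\varnothing \neq S \subseteq \nu (U)$. I expect this non-emptiness, together with the tacit fact that $\bigcurlyvee$ really computes least upper bounds (so that the down-set $\nu (U)$ has join exactly $U$), to be the only subtlety; the verification of $\tmop{id}_{\sepa} \cong_{\sepa} \siex \nu$ is then immediate from the pointwise equality just established.
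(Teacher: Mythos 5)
Your proof is correct and follows essentially the same route as the paper: both arguments hinge on the preceding Remark identifying $\siex$ with $\bigcurlyvee$ in the join-compatible setting, and both produce a valuation whose pointwise join is literally the identity on $\sepa$, so that density holds on the nose. The only real difference is the valuation itself: the paper chooses, for each element $a$ of the separator, a witnessing family $M_a \subseteq M$ with $a = \bigcurlyvee M_a$ (an application of choice), whereas you take the canonical down-set $\nu(U) = \{ m \in M \mid m \preccurlyeq U \}$ and use the hypothesis only to sandwich $U = \bigcurlyvee S \preccurlyeq \bigcurlyvee \nu(U) \preccurlyeq U$; this is choice-free, and it makes explicit the verification that $\nu(U)$ is non-empty, a point the paper passes over in silence. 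One caveat on that last step: your justification that the witnessing family $S$ is non-empty invokes $\bot \notin \sepa$, i.e.\ consistency of the separator, which is not part of the definition of an implicative algebra. In the inconsistent case one has $\sepa = \mathcal{A}$, and then density is trivial (any constant valuation $U \mapsto \{m_0\}$ works, since every candidate implication automatically lies in $\sepa$), so the statement survives; but strictly speaking your argument, like the paper's, needs this degenerate case to be split off or the consistency hypothesis to be made explicit.
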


\begin{proof}
  For each $a \in \mathcal{A}$ we can choose a subset $M_a \subseteq M$ such
  that $a = \bigcurlyvee M_a$ by virtue of $(i)$, hence there is the trivial
  valuation
  \begin{eqnarray*}
    \nu : \mathcal{A} & \longrightarrow & \ppows{M}\\
    a & \mapsto & M_a
  \end{eqnarray*}
  We have
  \begin{eqnarray*}
    \left( \siex \nu \right)_a & = & \siex M_a\\
    & = & \bigcurlyvee M_a \qquad \text{compatible with joins}\\
    & = & a
  \end{eqnarray*}
  hence
  \begin{eqnarray*}
    \bigM_{a \in A} \left( a \rightarrow \left( \siex \nu \right)_a
    \right) & = & \bigM_{a \in \mathcal{A}} (a \rightarrow a)\\
    & = & \bigM_{a \in A} \left( \left( \siex \nu \right)_a
    \rightarrow a \right)
  \end{eqnarray*}
  But $\tmop{id} \preccurlyeq \bigM_{a \in \mathcal{A}} (a
  \rightarrow a)$ hence $\bigM_{a \in \mathcal{A}} (a \rightarrow a)
  \in \sepa$, which implies that $M$ is dense.
\end{proof}

\subsection{Compactness}

\begin{lemma}
  \label{lem:pre-compact}Let $M \subseteq \sepa$ and let $g : A
  \rightarrow B$ be a morphism in $\mass$. The following are equivalent
  \begin{enumerateroman}
    \item any morphism $k : X \rightarrow \tmop{Im} (g)$ in
    $\asma$ with $X{\in}{\mass}$ admits a
    lift

    \begin{center}
      \raisebox{-0.5\height}{\includegraphics[width=14.8909550045914cm,height=2.87590187590188cm]{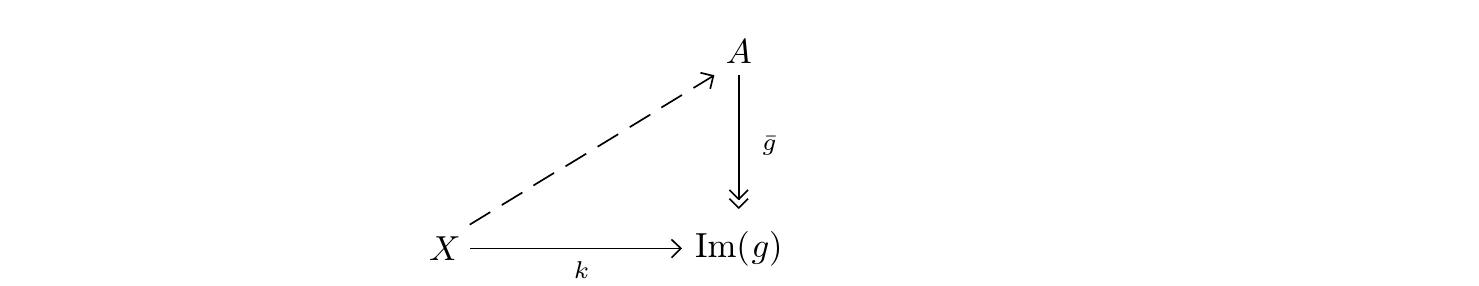}}
    \end{center}

    \item any morphism $l \circ \bar{f} : X \rightarrow \tmop{Im} (g)$ in
    $\asma$ where
    \begin{itemizeminus}
      \item $f : X \rightarrow Y$ is a morphism in $\mass$;

      \item $l : \tmop{Im} (f) \rightarrow \tmop{Im} (g)$ is a morphism in
      $\asma$
    \end{itemizeminus}
    admits a lift

    \begin{center}
      \raisebox{-0.5\height}{\includegraphics[width=14.8909550045914cm,height=2.97507542962088cm]{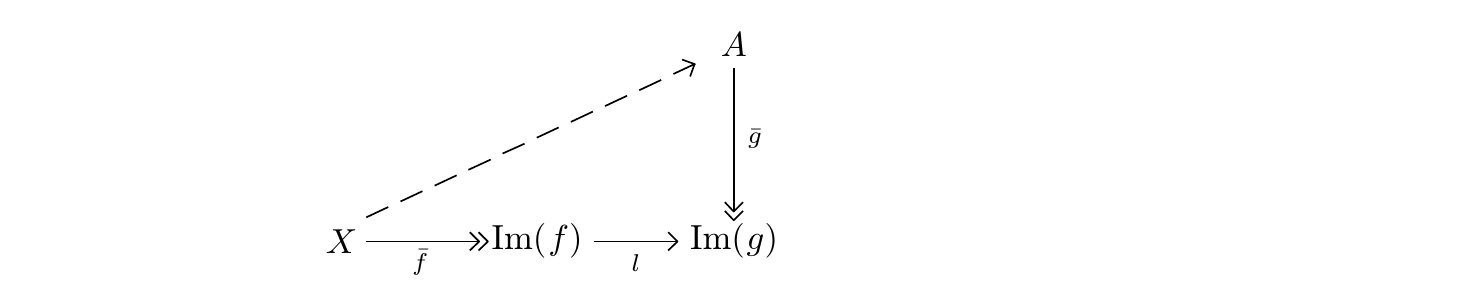}}
    \end{center}
  \end{enumerateroman}
\end{lemma}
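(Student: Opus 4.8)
The plan is to notice that the two clauses pose lifting problems against the very same arrow, namely the regular epi $\bar{g} : A \to \tmop{Im}(g)$, and that their bottom edges $k$ range over literally the same class of morphisms once the notation in \emph{(ii)} is unwound. Consequently the equivalence is a matter of rewriting a given map into the factored shape of \emph{(ii)} (and back), not of producing any genuinely new lift: a solution to a lifting problem in one formulation \emph{is} a solution in the other, since in both cases a lift means a morphism $X \to A$ whose composite with $\bar{g}$ recovers the given map into $\tmop{Im}(g)$.

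For the direction $(i) \Rightarrow (ii)$ I would argue by inclusion of test diagrams. Any bottom edge considered in \emph{(ii)} has the form $l \circ \bar{f}$ with $f : X \to Y$ a morphism of $\mass$ and $l : \tmop{Im}(f) \to \tmop{Im}(g)$ a morphism of $\asma$; its domain is $X$, and since $f$ is a morphism of $\mass$ we have $X \in \mass$. Hence $l \circ \bar{f}$ is already a morphism $k : X \to \tmop{Im}(g)$ of $\asma$ with $X \in \mass$, so \emph{(i)} applies verbatim and delivers the lift required by \emph{(ii)}.

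For the converse $(ii) \Rightarrow (i)$ the key device is to present an arbitrary $k : X \to \tmop{Im}(g)$ (with $X \in \mass$) in the factored form demanded by \emph{(ii)}. I would take $f := \tmop{id}_X$, which is a morphism of $\mass$ precisely because $X \in \mass$. The image factorisation of an identity collapses: $\tmop{id}_X$ is at once a regular epi and a mono, so $\tmop{Im}(\tmop{id}_X) = X$ with $\bar{\tmop{id}_X} = \tmop{id}_X$ and $\iota_{\tmop{id}_X} = \tmop{id}_X$. Setting $l := k : \tmop{Im}(\tmop{id}_X) = X \to \tmop{Im}(g)$ gives $k = l \circ \bar{\tmop{id}_X}$, a morphism of exactly the shape covered by \emph{(ii)}, whence \emph{(ii)} supplies the desired lift of $k$.

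There is essentially no obstacle beyond this bookkeeping; the two points to verify with care are that the image factorisation of $\tmop{id}_X$ is trivial (so that $\bar{\tmop{id}_X} = \tmop{id}_X$ and the factored map is honestly $k$), and that both clauses test against the same cover $\bar{g}$, so that a lift for one is immediately a lift for the other. In effect the identity trick exhibits every bottom edge of \emph{(i)} as a bottom edge of \emph{(ii)}, while the domain-membership observation exhibits the reverse inclusion, so the two classes of lifting problems coincide and the equivalence is immediate.
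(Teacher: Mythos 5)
Your proposal is correct, but the non-trivial direction $(ii)\Rightarrow(i)$ is argued by a genuinely different route than the paper's. You instantiate $(ii)$ at $f:=\tmop{id}_X$ and $l:=k$, using that the image factorisation of an identity is trivial. The paper instead sets $f:=\iota_g\circ k: X\rightarrow B$ — a morphism of $\mass$, since $X, B\in\mass$ and $\mass\subseteq\asma$ is full — takes its image factorisation $\iota_g \circ k = \iota_f\circ\bar f$, and obtains $l:\tmop{Im}(f)\rightarrow\tmop{Im}(g)$ as the unique diagonal filler of the square with regular epi $\bar f$ on top and mono $\iota_g$ on the bottom, so that $k = l\circ\bar f$ and $(ii)$ applies. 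Your identity trick is more elementary: it needs neither the orthogonality of the (regular epi, mono) system nor the observation that $B\in\mass$. The one point to treat with slightly more care than you do is your claim that $\tmop{Im}(\tmop{id}_X)=X$ on the nose: with the paper's concrete construction of images in $\asma$, the carrier of $\tmop{Im}(\tmop{id}_X)$ is $\{\{x\}\mid x\in X\}$ and its existence predicate sends $\{x\}$ to $\siex\{\exi{X}(x)\}$, which is isomorphic to $\exi{X}(x)$ (uniformly, by the remark on implicative existence) but not equal to it; so strictly one should set $l:=k\circ\theta$ where $\theta:\tmop{Im}(\tmop{id}_X)\stackrel{\cong}{\rightarrow} X$ is the canonical iso, which still gives $l\circ\overline{\tmop{id}_X}=k$ since $\theta\circ\overline{\tmop{id}_X}=\tmop{id}_X$. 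This is harmless bookkeeping (or one simply chooses the trivial factorisation for identities, as factorisations are unique up to iso). What the paper's longer argument buys is that it exhibits an arbitrary $k$ in the form $l\circ\bar f$ with $f$ landing in $B$, which matches how clause $(ii)$ is consumed in the fullness theorem (morphisms of $\reglex{(\mass)}$ present exactly such data $l$ between images); but for the stated equivalence your argument is complete and shorter.
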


\begin{proof}
  The implication $(i) \Rightarrow (i \nocomma i)$ is trivial. Assume $(i
  \nocomma i)$ and let $k : X \rightarrow \tmop{Im} (g)$ be a morphism in
  $\asma$. We have the diagram

  \begin{center}
    \raisebox{-0.5\height}{\includegraphics[width=14.8909550045914cm,height=4.85904499540863cm]{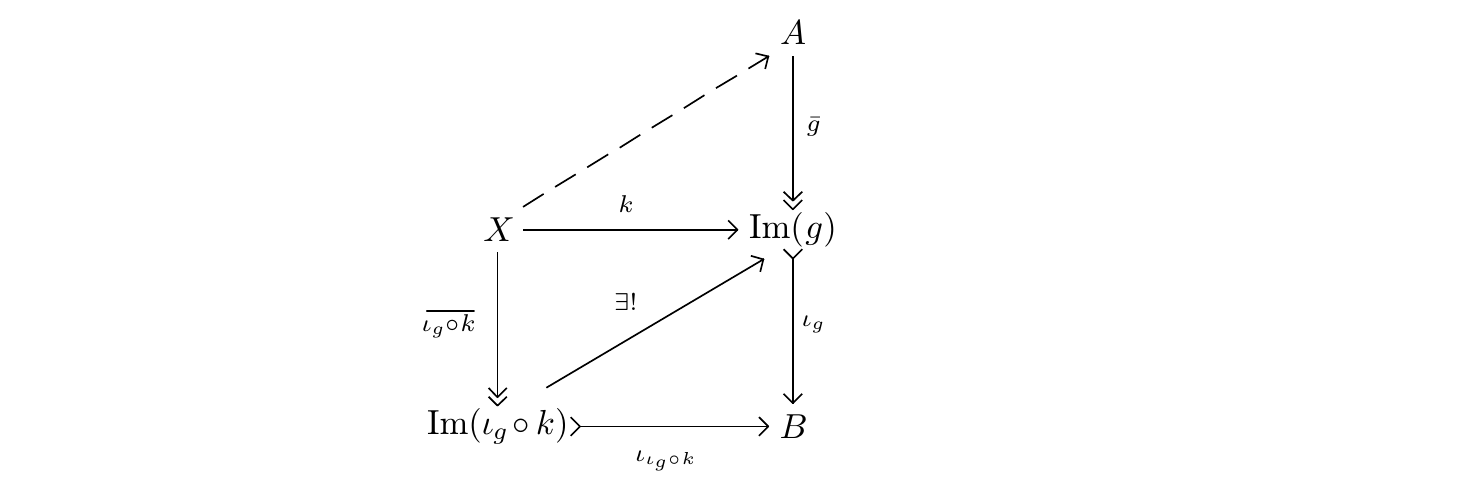}}
  \end{center}

  in $\asma$ with
  \begin{itemizeminus}
    \item $X, B \in \mass$;

    \item $\exists !$ the unique lift of the lower square (given that regular
    epis and monos form an orthogonal factorisation system)
  \end{itemizeminus}
  hence $k = (\exists !) \circ \overline{\iota_g \circ k}$ lifts.
\end{proof}

\begin{definition}
  $M \subseteq \sepa$ is {\tmem{compact}} if for any $u \in \mfibr{X}$
  and any valuation $\nu : X \rightarrow \ppows{M}$ such that
  \begin{eqnarray*}
    u & \sqsubseteq_X & \siex \nu
  \end{eqnarray*}
  there is $b \in \mfibr{X}{M}$ with $b_x \in \nu (x)$ for all $x \in X$ such
  that
  \[ u \text{ } \sqsubseteq_X \text{ } b \text{ } \sqsubseteq_X \text{ } \siex
     \nu \]
\end{definition}

\begin{lemma}
  \label{lem:compact}Let $M \subseteq \sepa$. Assuming choice the
  following are equivalent
  \begin{enumerateroman}
    \item $M$ is compact;

    \item for any morphisms $g : A \rightarrow B$ in $\mass$ and $k : X
    \rightarrow \tmop{Im} (g)$ in
    $\asma$ with $X \in \mass$ the
    diagram

    \begin{center}
      \raisebox{-0.5\height}{\includegraphics[width=14.8909550045914cm,height=2.87590187590188cm]{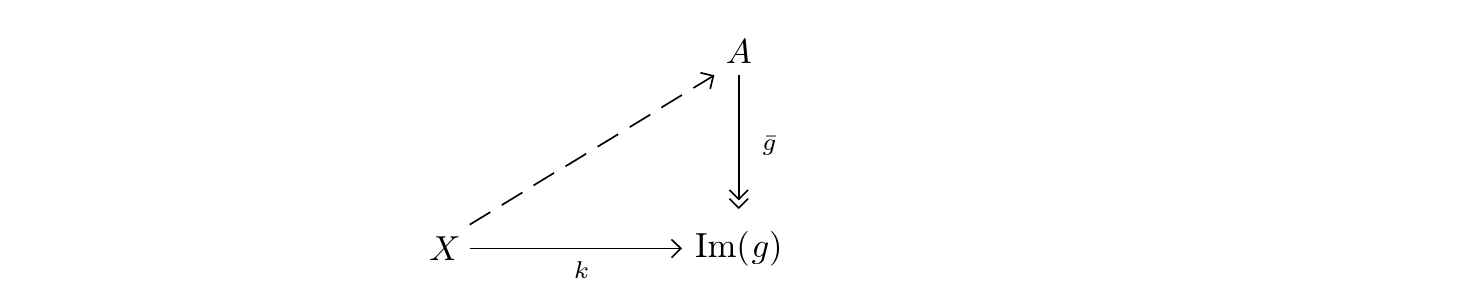}}
    \end{center}

    admits a lift.
  \end{enumerateroman}
\end{lemma}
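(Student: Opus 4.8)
The plan is to read compactness as a projectivity (lifting) property and to translate back and forth using the dictionary of Proposition~\ref{prop:tracked}, Lemma~\ref{lem:induced-valuation} and Proposition~\ref{prop:induced-valuation}. The axiom of choice will be spent exactly once, in the implication $(i)\Rightarrow(ii)$.

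For $(i)\Rightarrow(ii)$ I would start from $g:A\to B$ in $\mass$ and a morphism $k:X\to\tmop{Im}(g)$ of $\asma$ with $X\in\mass$. Since $A\in\mass$, the valuation $\nu_g$ of Lemma~\ref{lem:induced-valuation} takes values in $\ppows{M}$, so $\nu:=\nu_g\circ k:\car{X}\to\ppows{M}$ is a valuation and $k^{\ast}\exi{\tmop{Im}(g)}=\siex\nu$. Putting $u:=\exi{X}\in\mfibr{X}$, trackedness of $k$ reads $u\sqsubseteq_X\siex\nu$, so compactness supplies $b\in\mfibr{X}$ with $b_x\in\nu(x)$ and $u\sqsubseteq_X b\sqsubseteq_X\siex\nu$. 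Unfolding $\nu_g$, each $b_x$ equals $\exi{A}(a_x)$ for some $a_x$ lying in the fibre $k(x)$, whence $\bar{g}(a_x)=k(x)$; here I would invoke choice to select one such $a_x$ for every $x$ and set $h(x):=a_x$. Then $\bar{g}\circ h=k$ on carriers, and $h$ is tracked because $(h^{\ast}\exi{A})_x=b_x$ together with $u\sqsubseteq_X b$. Thus $h$ is the sought lift.

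For $(ii)\Rightarrow(i)$ I would begin with $u\in\mfibr{X}$ and a valuation $\nu:X\to\ppows{M}$ satisfying $u\sqsubseteq_X\siex\nu$, and feed $\nu$ to Proposition~\ref{prop:induced-valuation} to obtain a morphism $g_{\nu}:\hat{X}\to\check{X}$ of $\mass$ together with the isomorphism $(X,\siex\nu)\cong(\tmop{Im}(g_{\nu}),\siex g_{\nu})$. The inequality $u\sqsubseteq_X\siex\nu$ is precisely the assertion that the underlying bijection $[-]_{\nu}$ is tracked as a morphism $k:(X,u)\to\tmop{Im}(g_{\nu})$ of $\asma$, with $(X,u)\in\mass$. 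Applying $(ii)$ to $g_{\nu}$ and $k$ produces a lift $h:(X,u)\to\hat{X}$ with $\overline{g_{\nu}}\circ h=k$; commutativity of the triangle forces the first coordinate of $h(x)$ to be $x$, so $h(x)=(x,m_x)$ with $m_x\in\nu(x)$. Setting $b_x:=m_x$ gives $b\in\mfibr{X}$ with $b_x\in\nu(x)$; trackedness of $h$ yields $u\sqsubseteq_X b$, while $b\sqsubseteq_X\siex\nu$ holds with the single uniform tracker $\intpr{\lam{w}{\lam{z}{z\,w}}}$ furnished by Remark~\ref{rem:eta}. Hence $u\sqsubseteq_X b\sqsubseteq_X\siex\nu$, which is compactness.

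The genuinely non-formal steps are the choice in $(i)\Rightarrow(ii)$ and, in $(ii)\Rightarrow(i)$, the two checks that the witness $b\sqsubseteq_X\siex\nu$ can be realised by one tracker independent of $x$ and that the lift's first coordinate is forced to equal $x$; everything else is routine unwinding of the correspondence of Proposition~\ref{prop:tracked}. I do not expect to need Lemma~\ref{lem:pre-compact} here, since compactness already lets one treat an arbitrary $k:X\to\tmop{Im}(g)$ directly rather than only those of the form $l\circ\bar{f}$.
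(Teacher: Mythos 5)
Your proposal is correct and follows essentially the same route as the paper's own proof: for $(i)\Rightarrow(ii)$ you pull back the induced valuation $\nu_g$ along $k$, apply compactness, and spend choice to pick a preimage $a_x\in k(x)$ with $\exi{A}(a_x)=b_x$; for $(ii)\Rightarrow(i)$ you lift the morphism $[-]_{\nu}:(X,u)\rightarrow\tmop{Im}(g_{\nu})$ coming from Proposition~\ref{prop:induced-valuation} and read off $b$ from the lift, exactly as the paper does. Your only departures are cosmetic improvements: you make explicit the tracker $\intpr{\lam{w}{\lam{z}{z\,w}}}$ for $b\sqsubseteq_X\siex\nu$ (which the paper leaves implicit) and you correctly observe that Lemma~\ref{lem:pre-compact} is not needed here, matching the paper, where that lemma enters only in Theorem~\ref{th:compact}.
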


\begin{proof}
  $(i \nocomma) \Rightarrow (i \nocomma i)$ Let $g : A \rightarrow B$ be a
  morphism in $\mass$ and $k : X \rightarrow \tmop{Im} (g)$ be a morphism in
  $\asma$ with $X \in \mass$. The
  induced valuation $\nu_g : \tmop{Im} (g) \rightarrow \ppows{M}$ verifies
  \begin{eqnarray*}
    \siex \nu_g & = & \exi{\tmop{Im} (g)} \qquad (\star)
  \end{eqnarray*}
  (c.f. Lemma \ref{lem:induced-valuation}). The map $\nu \assign \nu_{\bar{g}}
  \circ k$ is a valuation as well. Furthermore

  \begin{eqnarray*}
    \bigM_{x \in X} \left( \exi{X} (x) \rightarrow \siex \nu (x)
    \right) & = & \bigM_{x \in X} \left( \exi{X} (x) \rightarrow
    \siex \nu_g (k (x)) \right)\\
    & = & \bigM_{x \in X} \left( \exi{X} (x) \rightarrow
    \exi{\tmop{Im} (g)} (k (x)) \right)\\
    & \in & \sepa
  \end{eqnarray*}
  since $k$ is tracked, hence $\exi{X} \sqsubseteq_X \siex \nu$. $M$ is
  discrete by hypothesis, hence there is $b \in \mfibr{X}{M}$ with $b_x \in
  \nu (x)$ for all $x \in X$ such that
  \[ \exi{X} \text{ } \sqsubseteq_X \text{ } b \text{ } \sqsubseteq_X \text{ }
     \siex \nu \]
  We have
  \begin{eqnarray*}
    \Xi \left( \exi{X} \text{ } \sqsubseteq_X \text{ } b \text{ } \right) & =
    & \left( X, \exi{X} \right) \longrightarrowlim^{\tmop{id}} (X, b)
  \end{eqnarray*}
  On the other hand
  \begin{eqnarray*}
    \left( \siex \nu \right) (x) & = & \siex (\nu_{\bar{g}} \circ k) (x)\\
    & = & \siex \nu_{\bar{g}} (k (x))
  \end{eqnarray*}
  for all $x \in X$, hence $\siex \nu = k^{\ast} \siex \nu_{\bar{g}}$ which
  entails
  \begin{eqnarray*}
    \Xi \left( b \text{ } \sqsubseteq_X \text{ } \siex \nu \right) & = & \Xi
    \left( b \text{ } \sqsubseteq_X \text{ } k^{\ast} \siex \nu_{\bar{g}}
    \right)\\
    & = & (X, b) \longrightarrowlim^k \left( \tmop{Im} (f), \siex \nu_g
    \right)
  \end{eqnarray*}
  This in turn entails that $k$ factors as
  \[ \left( X, \exi{X} \right) \longrightarrowlim^{\tmop{id}} (X, b)
     \longrightarrowlim^k \left( \tmop{Im} (f), \siex \nu_g \right) \]
  in $\asma$. Now $b_x \in \nu (x) =
  \left\{ \exi{A} (a) |a \in k (x) \right\}$ for each $x \in X$, hence there
  is $\tilde{a} \in k (x)$ such that $b_x = \exi{A} (\tilde{a})$. Notice that
  while the {\tmem{value}} $\exi{A} (\tilde{a})$ of $b_x$ is determined,
  $\tilde{a}$ itself does not need to be unique. Let
  \begin{eqnarray*}
    \tilde{A}_x & : = & \left\{ \tilde{a} \in k (x) |b_x = \exi{A} (\tilde{a})
    \right\}
  \end{eqnarray*}
  be the set of those $\tilde{a}$. Assuming choice there is a map $l : X
  \rightarrow A$ such that $l (x) \in \tilde{A}_x$ for all $x \in X$. \
  Furthermore
  \begin{eqnarray*}
    \bigM_{x \in X} \left( b_x \rightarrow \exi{A} (l (x)) \right) &
    = & \bigM_{x \in X} \left( \exi{A} (\tilde{a}) \rightarrow
    \exi{A} (\tilde{a}) \right)\\
    & \in & \sepa
  \end{eqnarray*}
  so $l$ is tracked. Finally we have $\bar{g} \circ l = k$ as maps since $k
  (x) = \bar{g}  (a)$ for {\tmem{any}} $a \in k (x)$ hence the lift

  \begin{center}
    \raisebox{-0.5\height}{\includegraphics[width=14.8909550045914cm,height=3.96664698937426cm]{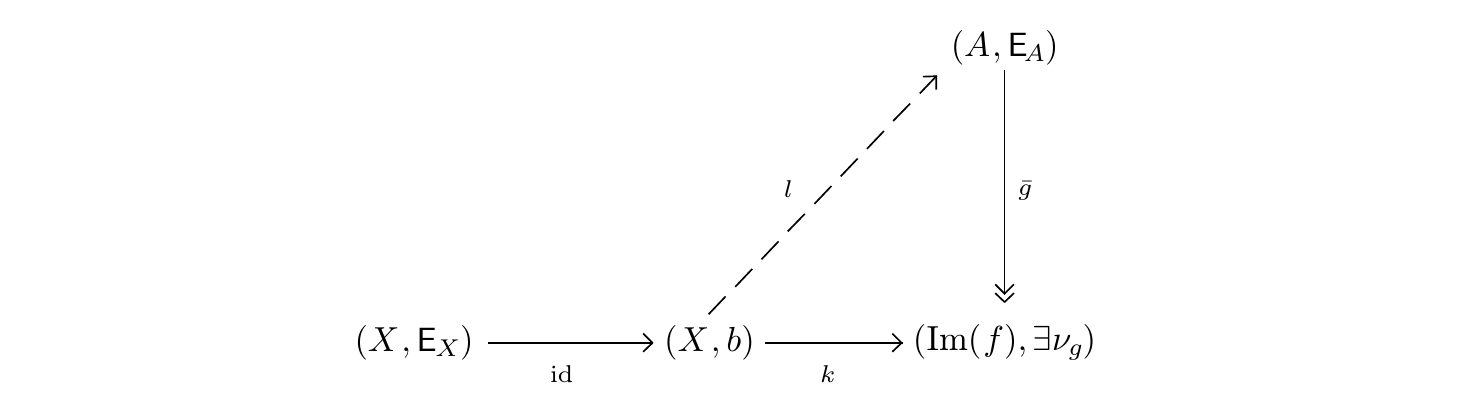}}
  \end{center}

  $(i \nocomma i) \Rightarrow (\nocomma i)$ Let $u \in \mathbb{P} [M]_X$ and
  $\nu : X \rightarrow \ppows{M}$ be a valuation such that $\bigM_{x
  \in X} \left( u_x \rightarrow \left( \siex \nu \right)_x \right) \in
  \sepa$. We thus have $u \sqsubseteq_X \siex \nu$ and further $\Xi^{-
  1} \left( u \sqsubseteq_X \siex \nu \right) = \tmop{id} : (X, u) \rightarrow
  \left( X, \siex \nu \right)$. Composing the latter with $[-]_{\nu}$ (c.f
  Proposition \ref{prop:induced-valuation}) yields the morphism $[-]_{\nu} :
  (X, u) \rightarrow \left( \tmop{Im} (g_{\nu}), \exi{\tmop{Im} (g_{\nu})}
  \right)$ in $\asma$. The latter has a
  lift

  \begin{center}
    \raisebox{-0.5\height}{\includegraphics[width=14.8909550045914cm,height=2.97507542962088cm]{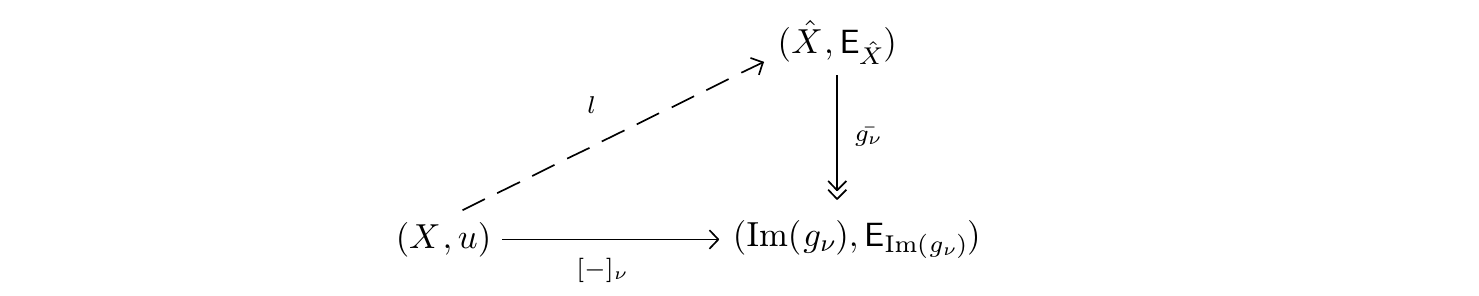}}
  \end{center}

  Let $b \in \mathbb{P} [M]_X$ be given by $b_x \assign \exi{\hat{X}} (l
  (x))$. We have by construction $l (x) = x_m$ for some $m \in \nu (x)$, hence
  \begin{eqnarray*}
    \exi{\hat{X}} (l (x)) & = & \exi{\hat{X}} (x_m)\\
    & = & m
  \end{eqnarray*}
  so $b_x \in \nu (x)$ for all $x \in X$. Now $l$ is tracked so
  \begin{eqnarray*}
    \bigM_{x \in X} (u_x \rightarrow b_x) & = & \bigM_{x \in
    X} \left( \exi{X} \rightarrow \exi{\hat{X}} (l (x)) \right)\\
    & \in & \sepa
  \end{eqnarray*}

\end{proof}

\begin{theorem}
  \label{th:compact}Let $M \subseteq \sepa$ be an algebraic subset.
  Assuming choice the following are equivalent
  \begin{enumerateroman}
    \item  $M$ is {\tmem{compact}};
    \item $U : {\mass}_{\tmop{reg}} \rightarrow \asma$ is full.
  \end{enumerateroman}
\end{theorem}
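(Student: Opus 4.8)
The plan is to observe that, once the explicit description of $U$ on morphisms is unwound, the statement ``$U$ is full'' is literally condition $(ii)$ of Lemma \ref{lem:pre-compact}. Since Lemmas \ref{lem:pre-compact} and \ref{lem:compact} together already give that $M$ is compact if and only if condition $(i)$ of Lemma \ref{lem:pre-compact} holds if and only if condition $(ii)$ of Lemma \ref{lem:pre-compact} holds, the theorem reduces to this one translation, and that is where I would concentrate; everything else is bookkeeping with the factorisation $g = \iota_g \circ \bar{g}$.

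Recall that a morphism $[l] : \arobj{f}{X}{Y} \to \arobj{g}{A}{B}$ of $\reglex{(\mass)}$ is represented by an $l : X \to A$ in $\mass$ such that $g \circ l$ coequalises the kernel pair of $f$, and that $U$ sends it to the unique $\tilde{l} : \tmop{Im}(f) \to \tmop{Im}(g)$ determined by $\tilde{l} \circ \bar{f} = \bar{g} \circ l$. As the objects in the image of $U$ are exactly the $\tmop{Im}(f)$ for morphisms $f$ of $\mass$, fullness of $U$ says: for all such $f$, $g$ and every $h : \tmop{Im}(f) \to \tmop{Im}(g)$ in $\asma$ there is an $l : X \to A$ in $\mass$ with $\tilde{l} = h$, that is, with $\bar{g} \circ l = h \circ \bar{f}$. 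This equation exhibits $l$ as a lift of $h \circ \bar{f} : X \to \tmop{Im}(g)$ through $\bar{g}$, which is precisely condition $(ii)$ of Lemma \ref{lem:pre-compact} with its ``$l$'' taken to be our $h$.

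For $(i) \Rightarrow (ii)$ I would assume $M$ compact, so that Lemmas \ref{lem:compact} and \ref{lem:pre-compact} supply condition $(ii)$ of Lemma \ref{lem:pre-compact}. Given $h : \tmop{Im}(f) \to \tmop{Im}(g)$, lifting $h \circ \bar{f}$ through $\bar{g}$ yields an $l : X \to A$ with $\bar{g} \circ l = h \circ \bar{f}$. Then $g \circ l = \iota_g \circ \bar{g} \circ l = \iota_g \circ h \circ \bar{f}$ coequalises the kernel pair of $f$ (because $\bar{f}$ does and $\iota_g$ is mono), so $[l]$ is a genuine morphism of $\reglex{(\mass)}$; and $U([l]) = \tilde{l} = h$ because $\tilde{l} \circ \bar{f} = \bar{g} \circ l = h \circ \bar{f}$ while $\bar{f}$ is epi. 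Hence $U$ is full. For $(ii) \Rightarrow (i)$ the same dictionary runs backwards: given the data $f$ and $l : \tmop{Im}(f) \to \tmop{Im}(g)$ of Lemma \ref{lem:pre-compact}$(ii)$, fullness of $U$ applied to $l$, regarded as a morphism $U(\arobj{f}{X}{Y}) \to U(\arobj{g}{A}{B})$, produces some $[m]$ with $U([m]) = l$, hence a representative $m : X \to A$ with $\bar{g} \circ m = l \circ \bar{f}$; this $m$ is exactly the required lift, so condition $(ii)$ of Lemma \ref{lem:pre-compact} holds and $M$ is compact by the two lemmas.

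The main obstacle, modest as it is, sits in the step $(i) \Rightarrow (ii)$: one must confirm that the chosen lift really names a morphism of the regular completion---that $g \circ l$ coequalises $f$'s kernel pair irrespective of which lift is selected---and that $U([l])$ equals $h$ on the nose rather than merely after precomposition with $\bar{f}$. Both points hinge solely on $\iota_g$ being mono and $\bar{f}$ being epi, so no input beyond the cited lemmas is required; the substantive work, together with the appeal to choice, has already been discharged in Lemmas \ref{lem:pre-compact} and \ref{lem:compact}.
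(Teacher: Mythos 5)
Your proposal is correct and follows the same route as the paper: it reduces the theorem to the lifting condition via Lemmas \ref{lem:pre-compact} and \ref{lem:compact}, and then identifies that condition with fullness of $U$ using the explicit description of $U$'s action on morphisms from Section \ref{sec:regular} --- the only step the paper declares ``obviously equivalent'' and which you usefully spell out (noting, as a small quibble, that your appeal to $\iota_g$ being mono when checking that $g \circ l$ coequalises the kernel pair of $f$ is superfluous, since $\bar{f} \circ f_0 = \bar{f} \circ f_1$ already gives it by composition).
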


\begin{proof}
  $M$ being compact is equivalent to the assertion that any morphism $l \circ
  \bar{f} : X \rightarrow \tmop{Im} (g)$ where
  \begin{enumerateroman}
    \item $f : X \rightarrow Y$ and $g : A \rightarrow B$ are morphisms in
    $\mass$;

    \item $l : \tmop{Im} (f) \rightarrow \tmop{Im} (g)$ is a morphism in
    $\asma$
  \end{enumerateroman}
  admits a lift

  \begin{center}
    \raisebox{-0.5\height}{\includegraphics[width=14.8909550045914cm,height=2.97507542962088cm]{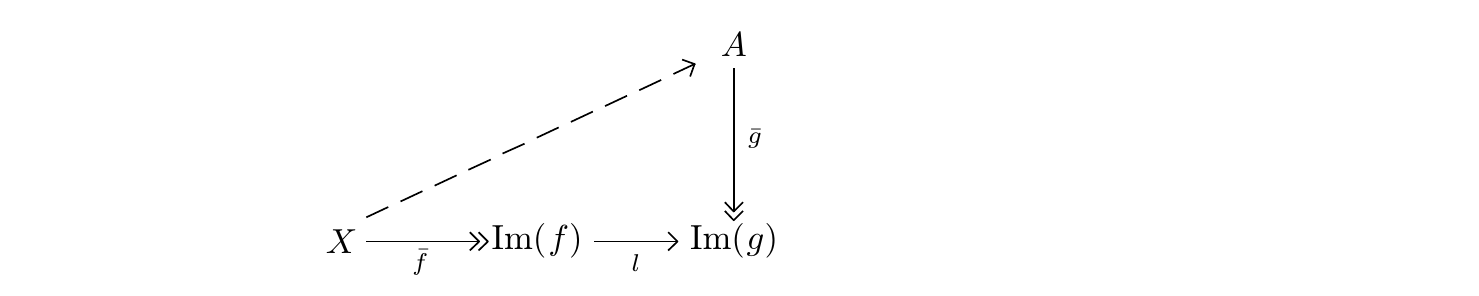}}
  \end{center}

  (c.f. Lemma \ref{lem:pre-compact} and \ref{lem:compact}), which is obviously
  equivalent to the assertion that $U : {\mass}_{\tmop{reg}} \rightarrow
  \asma$ is full (c.f. Section \ref{sec:regular}).
\end{proof}

\begin{example}
  We observed that the set of all singletons $M \assign \{ \{ p \} |p \in P
  \}$ is algebraic (c.f. Example \ref{ex:alg}) and dense (c.f. Example
  \ref{ex:single-dense}). Taking it from there we now show that $M$ is compact
  as well. Let $u \in \mathbb{P} [M]_X$ be a family of singletons $u_x = \{
  \check{u}_x \}$ and $\nu : X \rightarrow \ppows{M}$ be a valuation. Suppose
  $\bigM_{x \in X} \left( u_x \rightarrow \left( \siex \nu \right)_x
  \right) \in \sepa$. Unravelling yields
  \begin{eqnarray*}
    \bigM_{x \in X} (u_x \rightarrow (\siex \nu)_x) & = &
    \bigcap_{x \in X} \left( \{ \check{u}_x \} \rightarrow \bigcup_{s \in \nu
    (x)} s \right)\\
    & = & \bigcap_{x \in X} \left\{ c \in P|c \cdot \check{u}_x \in \bigcup
    \nu (x) \right\}\\
    & = & \bigcap_{x \in X} \{ c \in P|c \cdot \check{u}_x \in \nu (x) \}
    \qquad \text{elements of $\nu (x)$ are singletons}\\
    & \neq & \varnothing
  \end{eqnarray*}
  Let $c_0 \in \bigcap_{x \in X} \{ c \in P|c \cdot \check{u}_x \in \nu (x)
  \}$ and further $b \in \mathbb{P} [M]_X$ given by $b_x \assign \{ c \cdot
  \check{u}_x \}$. We have $b_x \in \nu (x)$ for all $x \in X$ and further
  \begin{eqnarray*}
    \bigM_{x \in X} (u_x \rightarrow b_x) & = & \bigcap_{x \in X}
    (u_x \rightarrow b_x)\\
    & = & \bigcap_{x \in X} (\{ \check{u}_x \} \rightarrow \{ c_0 \cdot
    \check{u}_x \})\\
    & \neq & \varnothing
  \end{eqnarray*}
\end{example}

\subsection{Generators}

\begin{definition}
  An algebraic subset $M \subseteq \sepa$ is a {\tmem{generator}} of
  $\sepa$ if it is dense and compact.
\end{definition}

\begin{theorem}
  \label{th:hilbert}Let $M \subseteq \sepa$ be an algebraic subset. The
  following are equivalent
  \begin{enumerateroman}
    \item $M$ is a generator of $\sepa$;

    \item $\asma$ is the regular
    completion of $\mass$.
  \end{enumerateroman}
\end{theorem}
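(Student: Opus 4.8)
The plan is to assemble the theorem directly from the two characterisations already in hand, using the standing fact that the comparison functor is automatically faithful. First I would recall that $\mass$ is a lex subcategory of the regular category $\asma$: the algebraicity of $M$ guarantees that $\mass$ is closed under finite limits, so $\reglex{(\mass)}$ is defined and we are in the situation discussed in Section \ref{sec:regular}, where $\mathbb{C} \subset \mathbb{D}$ is a lex subcategory. There it was observed that the canonical functor $U : \reglex{(\mass)} \rightarrow \asma$ is always faithful. Since the definition of a regular completion asks precisely that $U$ be an equivalence, and a faithful functor is an equivalence exactly when it is in addition essentially surjective and full, I would reduce the claim to showing that $M$ is a generator if and only if $U$ is essentially surjective and full.

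Next I would unfold the definition of generator: $M$ is a generator of $\sepa$ precisely when it is dense and compact. Density is equivalent to essential surjectivity of $U$ by Theorem \ref{th:dense}, and compactness is equivalent to fullness of $U$ by Theorem \ref{th:compact} (this is where the standing assumption of choice enters, exactly as in that theorem). Chaining these equivalences yields
\begin{align*}
  M \text{ is a generator of } \sepa
    &\iff M \text{ is dense and compact}\\
    &\iff U \text{ is essentially surjective and full}\\
    &\iff U \text{ is an equivalence of categories}\\
    &\iff \asma \text{ is the regular completion of } \mass,
\end{align*}
which is the asserted equivalence of (i) and (ii).

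I do not expect any genuine obstacle at this stage, since all the substantive work has already been carried out in Theorems \ref{th:dense} and \ref{th:compact}; the present statement is a synthesis of the two. The only points deserving a word of care are, first, that algebraicity of $M$ is invoked solely to make $\reglex{(\mass)}$ meaningful (as was stressed in the proof of Theorem \ref{th:dense}), playing no further role in the logical structure, and second, that the fullness half of the argument rests on choice through Theorem \ref{th:compact}. Neither point disturbs the clean chain of equivalences above.
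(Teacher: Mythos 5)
Your proposal is correct and follows exactly the paper's route: the paper's proof of Theorem \ref{th:hilbert} is simply the citation ``Theorem \ref{th:dense} and \ref{th:compact}'', and you have merely spelled out the implicit chain of equivalences (faithfulness of $U$ being automatic, so equivalence reduces to essential surjectivity plus fullness). Your remarks on the role of algebraicity and of choice match the caveats already recorded in the proofs of those two theorems.
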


\begin{proof}
  Theorem \ref{th:dense} and \ref{th:compact}.
\end{proof}

\begin{corollary}
  If one and thus both of the equivalent assertions of Theorem
  \ref{th:hilbert} hold, the closure of $\mass$ under isomorphisms is
  $\asma$'s subcategory of projective
  objects.
\end{corollary}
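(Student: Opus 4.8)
The plan is to read the Corollary off the general principle recalled at the close of Subsection~\ref{subsec:reg-universal}: whenever $\mathbb{D}$ is a regular completion of $\mathbb{C}$, the projective objects of $\mathbb{D}$ are exactly the closure of $\mathbb{C}$ under isomorphisms. By Theorem~\ref{th:hilbert} the standing hypothesis --- that $M$ is a generator of $\sepa$ --- is precisely the assertion that $\asma$ is the regular completion of $\mass$, i.e.\ that $U : \reglex{\left( \mass \right)} \rightarrow \asma$ is an equivalence. Instantiating the principle at $\mathbb{C} = \mass$ and $\mathbb{D} = \asma$ and transporting along $U$ yields the claim with no further computation.

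Should one prefer to unfold this rather than cite it, I would work inside $\reglex{\mass}$ and transport the answer across the equivalence $U$, recalling that $P$ is projective in a regular category iff every regular epi with codomain $P$ splits. Three moves suffice. First, \emph{the objects $\y(X)$, for $X \in \mass$, are projective}: its $\asma$-side content is exactly the lifting of a morphism $k : X \to \tmop{Im}(g)$ out of $X \in \mass$ along the cover $\bar g$, which is what the compactness lemmas (Lemmas~\ref{lem:pre-compact} and~\ref{lem:compact}) secure, and which Theorem~\ref{th:compact} packages as the fullness of $U$. Second, \emph{there are enough projectives}: by density (Theorem~\ref{th:dense}) every assembly is, up to isomorphism, an image $\tmop{Im}(g) \cong \arobj{g}{A}{B}$ of a morphism $g$ of $\mass$, and $\bar g : A \twoheadrightarrow \tmop{Im}(g)$ exhibits it as a regular quotient of $A \in \mass$. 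Third, \emph{projectives are recovered}: a projective $P$ receives such a cover $\bar g : A \twoheadrightarrow P$, which splits by projectivity, so $P$ is a retract of $A \in \mass$.

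The step demanding care --- and the one I expect to be the main obstacle in a fully hands-on treatment --- is the passage from ``$P$ is a retract of an object of $\mass$'' to ``$P$ lies in the isomorphism-closure of $\mass$''. In full generality the regular-completion principle only identifies the projectives with the idempotent (Cauchy) completion of $\mathbb{C}$; the bare isomorphism-closure suffices precisely when $\mass$ is already closed under the splitting of idempotents inside $\asma$. I would check this concretely: a splitting of an idempotent $e$ on $X \in \mass$ has carrier the set of $e$-classes with existence predicate inherited from $\exi{X}$, and one verifies that this predicate is again $M$-valued, using that $M$ is an algebraic generator. Granting this Cauchy-completeness of $\mass$, the three moves reassemble into the statement --- in agreement with the classical fact that the partitioned assemblies are exactly the projectives of $\eff$.
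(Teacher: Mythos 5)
Your opening paragraph is precisely the paper's proof: the corollary is obtained by citing the principle recalled at the end of Subsection~\ref{subsec:reg-universal} and instantiating it along the equivalence $U$ furnished by Theorem~\ref{th:hilbert}, with no further argument given in the paper. Your additional refinement is also sound --- and indeed sharper than the paper's unqualified lore, which strictly speaking identifies the projectives with the \emph{retracts} of objects of $\mass$, so that closure under idempotent splitting must be checked --- except that the splitting of an idempotent $e$ on $X \in \mass$ is most cleanly realised on the fixed-point set of $e$ with existence predicate the restriction of $\exi{X}$ (rather than on $e$-classes with an image-style predicate, which need not be $M$-valued), and then the predicate lies in $M$ for free, with no appeal to $M$ being an algebraic generator.
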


\begin{corollary}
  If $M$ and $M'$ are generators then $\asm{M} \simeq \asm{M'}$.
\end{corollary}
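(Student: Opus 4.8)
The plan is to read the result straight off Theorem~\ref{th:hilbert} together with the corollary immediately preceding it, so that no fresh computation is required. First I would apply Theorem~\ref{th:hilbert} twice, under the same choice hypotheses: since $M$ is a generator, $\asma$ is the regular completion of $\mass$; and since $M'$ is a generator, $\asma$ is \emph{also} the regular completion of $\asm{M'}$. The point to stress is that the ambient regular category in the two instances is literally one and the same, namely $\asma$.

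Next I would invoke the preceding corollary. It records the well-known fact (quoted in Section~\ref{sec:regular}) that whenever $\asma$ is the regular completion of a lex subcategory, closing that subcategory under isomorphisms yields precisely $\asma$'s full subcategory of projective objects. Write $\mathcal{P} \subseteq \asma$ for this subcategory of projectives. Crucially, projectivity is an intrinsic property of the objects of $\asma$ and makes no reference to any chosen generator; hence the isomorphism-closure of $\mass$ and the isomorphism-closure of $\asm{M'}$ coincide with \emph{the very same} subcategory $\mathcal{P}$, not merely with two equivalent copies of it.

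Finally I would use the elementary observation that a full subcategory is always equivalent to its closure under isomorphisms: the inclusion is fully faithful because both sit fully inside the common ambient category, and it is essentially surjective onto the closure by definition of the latter. Applying this to $\mass$ and to $\asm{M'}$ gives the chain $\asm{M} \simeq \mathcal{P} \simeq \asm{M'}$, and thus $\asm{M} \simeq \asm{M'}$.

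I do not anticipate a genuine obstacle, since Theorems~\ref{th:dense}, \ref{th:compact} and~\ref{th:hilbert} and the preceding corollary do all the heavy lifting. The only steps demanding a modicum of care are the two I have flagged: that ``projective object of $\asma$'' is a notion independent of the generator, so that both closures target the same $\mathcal{P}$, and that a full subcategory embeds into its isomorphism-closure as an equivalence. Once these are noted the chain of equivalences closes up at once.
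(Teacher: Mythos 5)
Your proof is correct and is precisely the argument the paper intends: the corollary is stated without proof immediately after the one identifying the isomorphism-closure of a generating subcategory with the projectives of $\asma$, so the intended reading is exactly your chain $\asm{M} \simeq \mathcal{P} \simeq \asm{M'}$. Your two flagged points (intrinsicness of projectivity, and equivalence of a full subcategory with its isomorphism-closure) are the right details to check, and both hold.
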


\section{Exact Completions}\label{sec:exact}

\begin{definition} Let $\mathbb{D}$ be a regular category
    \begin{enumeratenumeric}
        \item An equivalence relation in $\mathbb{D}$ is
        {\tmem{effective}} if it is a kernel pair.

        \item $\mathbb{D}$ is {\tmem{exact}} if every
        equivalence relation is effective.
  \end{enumeratenumeric}
\end{definition}

\begin{example}
  Any topos among others.
\end{example}

An exact category in the above sense is sometimes called {\tmem{Barr-exact}}
as there is quite a different notion of exactness due to Quillen.
We won't need the distinction here as the only
exact categories in sight are those in the sense of Barr.

\begin{notation}
  We shall write
  \begin{itemizeminus}
    \item $\Ex$ for the 2-category of exact categories, exact funtors and
    their natural transformations;

    \item $| - | : \Ex \rightarrow \Reg$ for the forgetful $2$-functor.
  \end{itemizeminus}
\end{notation}

It is part of the lore that the forgetful 2-functor $| - |$ has a left
biadjoint $(-)_{\tmop{ex} / \tmop{reg}} : \Reg \rightarrow \Ex$. This entails
that the forgetful 2-fiunctor $| - | : \Ex \rightarrow \Lex$ has a left
biadjoint $(-)_{\tmop{ex} / \tmop{lex}} : \Lex \rightarrow \Ex$ as well since
biadjoints compose.

Let $\mathbb{C}$ be a lex category.

\begin{definition}
  We call $\exlex{\mathbb{C}}$ $\mathbb{C}$'s {\tmem{exact completion}}.
\end{definition}

There is an embedding $\y : \mathbb{C} \rightarrow \exlex{\mathbb{C}}$
enjoying a universal property structurally identical to the one of regular
completions (c.f. Section \ref{sec:regular}). There is a well-known direct
construction of $\mathbb{C}_{\tmop{ex} / \tmop{lex}}$ {\cite{carboni1982free}}
which we now recall.

Let $\mathbb{C}$ be a category.

\begin{definition}
  Let $\mathcal{Q}$ be a category $s, t : Q_1 \rightrightarrows Q_0$ with two
  objects and two parallel morphisms. A quiver in $\mathbb{C}$ is a functor
  $\mathcal{Q} \rightarrow \mathbb{C}$.
\end{definition}

As the only quivers of interest here are those internal to a (typically lex)
category $\mathbb{C}$, we shall dispense with the qualifier ``in
$\mathbb{C}$''.

\begin{notation}
  Let $X$ be a quiver.
  \begin{enumeratenumeric}
    \item $X (x, x') \assign \pr{s_X}{t_X}^{- 1} (x, x')$.

    \item We shall write $e : x \rightsquigarrow x'$ for an edge $e \in X (x,
    x')$.
  \end{enumeratenumeric}
\end{notation}

\begin{definition}
  Let $X, Y$ be quivers.
  \begin{enumeratenumeric}
    \item A quiver morphism $f : X \rightarrow Y$ is given by morphisms $f_0 :
    X_0 \rightarrow Y_0$ and $f_1 : X_1 \rightarrow Y_1$ such that

    \begin{center}
      \raisebox{-0.5\height}{\includegraphics[width=14.8909550045914cm,height=2.97507542962088cm]{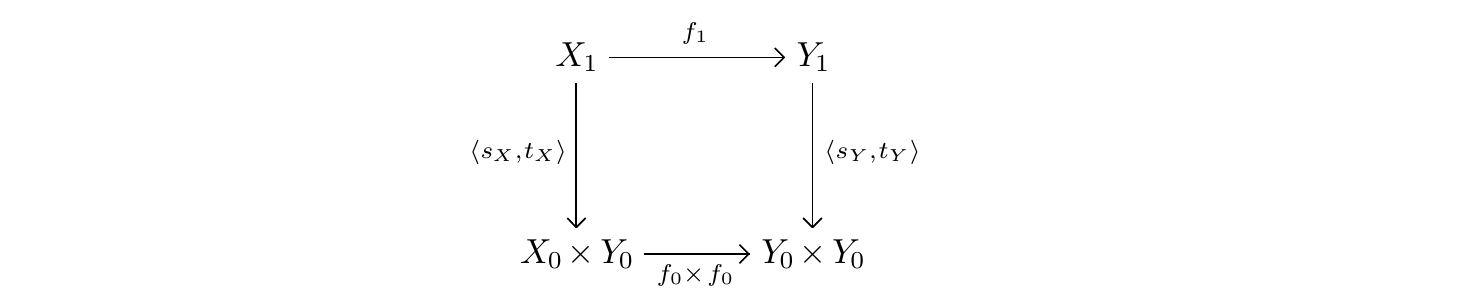}}
    \end{center}

    commutes.

    \item Let $f, g : X \rightarrow Y$ be quiver morphisms. A
    {\tmem{homotopy}} from $f$ to $g$ is a morphism $h : X_0 \rightarrow Y_1$
    such that the morphism $\langle f_0, g_0 \rangle : X_0 \rightarrow Y_0
    \times Y_0$ has a lift

    \begin{center}
      \raisebox{-0.5\height}{\includegraphics[width=14.8909550045914cm,height=3.37170405352224cm]{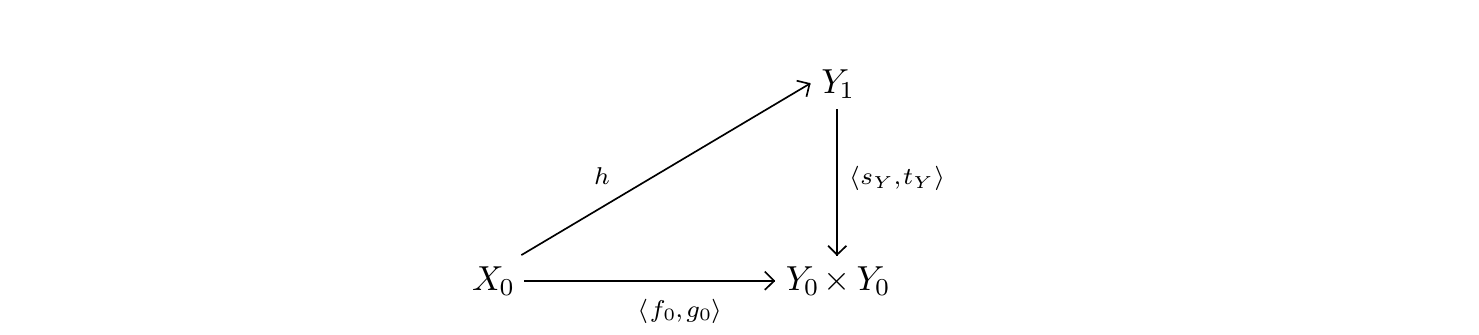}}
    \end{center}

    with respect to $\pr{s_Y}{t_Y}$.
  \end{enumeratenumeric}
\end{definition}

\begin{notation}
  We shall write $h : f \Rightarrow g$ for a homotopy $h$.
\end{notation}

\begin{remark}
  \label{rem:edge}Let $f, g : X \rightarrow Y$ be quiver morphisms. A homotopy
  $h : f \Rightarrow g$ is just a family
  \[  \left( h (x) : f_0 (x) \rightsquigarrow g_0 (x) \right)_{x \in X_0} \]
  of edges of $Y_1$. Notice that for any such edge there is an edge $\sigma_Y
  (h (x)) : g_0 (x) \rightsquigarrow f_0 (x)$ in the opposite direction.
\end{remark}

Suppose now $\mathbb{C}$ is lex.

\begin{definition}
  A {\tmem{pseudo-groupoid}} $X$ in $\mathbb{C}$ is a quiver $s_X, t_X : X_1
  \rightrightarrows X_0$ equipped with structural morphisms
  \begin{itemizeminus}
    \item $\rho_X : X_0 \rightarrow X_1$ such that
    \begin{enumerateroman}
      \item $s_X \circ \rho_X = \tmop{id}$;

      \item $t_X \circ \rho_X = \tmop{id}$;
    \end{enumerateroman}
    \item $\sigma_X : X_1 \rightarrow X_1$ such that
    \begin{enumerateroman}
      \item $s_X \circ \sigma_X = t_X$;

      \item $t_X \circ \sigma_X = s_X$;
    \end{enumerateroman}
    \item $\tau_X : X_1 \circledast X_1 \rightarrow X_1$ with $X_1
    \circledast X_1$ given by pullback of $s_X$ along $t_X$, such that
    \begin{enumerateroman}
      \item $s_X \circ \tau_X = s_X \circ p_0$;

      \item $t_X \circ \tau_X = t_X \circ p_1$.
    \end{enumerateroman}
  \end{itemizeminus}
\end{definition}

Entities we call pseudo-groupoids are also (and perhaps better) known as
{\tmem{pseudo-equivalence relations}}
{\cite{carboni1995some,carboni1982free}}. As the only pseudo-groupoids of
interest here are those internal to a lex category $\mathbb{C}$, we shall once
again dispense with the qualifier ``in $\mathbb{C}$''.

Notice that $\rho_X$ is a local section of $\pr{s_X}{t_X} : X_1 \rightarrow
X_0 \times X_0$ with respect to the diagonal. The homotopy relation is stable
by composition while composition of quiver morphisms among pseudo-groupoids is
associative up-to homotopy, so pseudo-groupoids organise themselves in the
category $\qgrpd{\mathbb{C}}$ along with homotopy classes of quiver morphisms
among them. We have
\begin{eqnarray*}
  \exlex{\mathbb{C}} & \simeq & \qgrpd{\mathbb{C}}
\end{eqnarray*}
{\cite{carboni1995some,carboni1982free}}. Quite unsurprisingly, the functor
$\tmmathbf{y} : \mathbb{C} \rightarrow \exlex{\mathbb{C}}$ turns then out to
be the one sending an object $X \in \mathbb{C}$ on the {\tmem{discrete}}
pseudo-groupoid $\tmop{id}, \tmop{id} : X \rightrightarrows X$. As pointed out
in {\cite{shulman2021derivator}}, pseudo-groupoids give rise to ``groupoidal''
bicategories internal to $\mathbb{C}$, with homotopies as 2-cells. So if we
restrain from quotienting by the homotopy relation, we end up with a
tricategory where these data organise themselves (taking into account
coherence conditions). The homotopy 1-category of this tricategory is then
$\qgrpd{\mathbb{C}}$ (see also {\cite{kinoshita2014category}}).

\section{Exact Completions of Implicative Assemblies}
\label{sec:exact-completion-asm}

\subsection{The Exact Completion of
$\asma$}

In this subsection we address the ex/reg completion of
$\asma$ as well as the ex/lex
completion of $\mass$, exploiting concepts and
results found in {\cite{menni2000exact}}.

\begin{definition}
  \label{def:chaos}{\tmdummy}

  \begin{enumeratenumeric}
    \item A {\tmem{chaotic situation}} is given by two lex categories
    $\mathbf{S}$ and $\mathbb{C}$ connected by an adjunction $| - | \dashv
    \Delta : \mathbf{S} \rightarrow \mathbb{C}$.

    \item A morphism $f : X \rightarrow Y$ in $\mathbb{C}$ is a
    {\tmem{pre-embedding}} if the square

    \begin{center}
      \raisebox{-0.5\height}{\includegraphics[width=14.8909550045914cm,height=3.4708448117539cm]{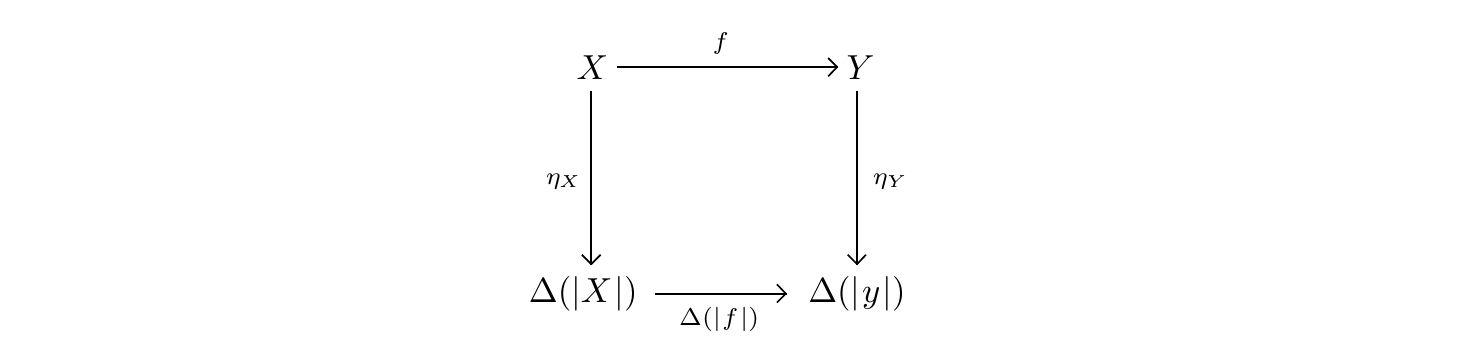}}
    \end{center}

    is a pullback.

    \item An object $\Upsilon \in \mathbb{C}$ is {\tmem{generic}} if for every
    $X \in \mathbb{C}$ there is a pre-embedding $X \rightarrow \Upsilon$.
  \end{enumeratenumeric}
\end{definition}

(as a word of caution, we use the symbol $\Delta$ for $\nabla$ in op.cit.). A
generic object in the above sense is not the same as (although related to) the
homonymous fibred artefact. The notion of chaotic situation is an abstract
version of what happens when we have a forgetful functor from a concrete
category into $\ens$.

\begin{proposition}[Menni]
  \label{prop:menni}Let $\mathbb{C}$ have a chaotic situation. The following
  are equivalent
  \begin{enumerateroman}
    \item $\mathbb{C}$ has a generic object;

    \item $\mathbb{C}$ has a generic mono.
  \end{enumerateroman}
\end{proposition}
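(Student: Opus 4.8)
The plan is to treat a generic object and a generic mono as the generic objects of two fibrations that the chaotic situation $|-|\dashv\Delta$ carves out of $\mathbb{C}$ through the class of pre-embeddings — the fibration of objects and the fibration of monos — and to pass between them. Before either implication I would isolate the one calculus that does all the work: pre-embeddings contain every identity, are closed under composition, and satisfy the cancellation property that if $g\circ f$ and $g$ are pre-embeddings then so is $f$. All three are immediate from the pasting and cancellation lemmas for pullback squares applied to the naturality squares of the unit $\eta\colon\mathrm{Id}\Rightarrow\Delta|-|$, once one observes that the naturality square of a composite is the vertical pasting of the two factor squares. I would deliberately refrain from asserting stability under arbitrary pullback, since $|-|$ is merely a left adjoint and need not preserve the limits involved.

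For the implication from a generic mono to a generic object I would argue purely formally. Given a generic mono $j\colon\Upsilon_0\rightarrowtail\Upsilon_1$, specialise its defining property to the trivial subobject $\mathrm{id}_X\colon X\rightarrowtail X$ of an arbitrary $X$. The presenting datum is then a pre-embedding of $\mathrm{id}_X$ into $j$, whose two legs $X\to\Upsilon_0$ and $X\to\Upsilon_1$ are both pre-embeddings; the second leg is exactly the pre-embedding $X\to\Upsilon_1$ we want. Hence $\Upsilon_1$ is a generic object and nothing beyond unwinding the definition is needed.

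For the converse I would manufacture a universal mono out of a generic object $\Upsilon$. The naive first attempt — take $\Upsilon_1=\Upsilon$ and let $\Upsilon_0$ be the object of nested values with $j$ the projection of a pair to its second coordinate, so that for a mono $m\colon A\rightarrowtail B$ one reads off the classifying square from the canonical pre-embedding $B\to\Upsilon$ together with $m$ — does deliver commuting squares with pre-embedding legs, but it fails outright to be a monomorphism, because such a projection collapses distinct first coordinates. The corrected construction is therefore to cut out, using only the finite limits of $\mathbb{C}$ and the chaotic objects that $\Delta$ supplies, a genuine subobject of a suitable power of $\Upsilon$ that still carries the nesting datum; genericity of $\Upsilon$ provides, for every mono $m$, the classifying pre-embedding into its codomain, and the composition and cancellation calculus then promotes the induced map on domains to a pre-embedding as well.

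I expect the whole difficulty to concentrate in this last step: producing a single $j$ that is at once a monomorphism and universal for all monos, rather than the easy but non-monic projection. Concretely one must check that the subobject carved out of the power of $\Upsilon$ is monic and that the classifying map assembled from $m$ and the pre-embedding $B\to\Upsilon$ is itself a pre-embedding. It is precisely the exactness encoded in the pre-embedding condition — that the domain be the honest pullback of its codomain's $\Delta$-datum, not merely some subobject of it — that has to be propagated through the construction, and that the naive candidate violates.
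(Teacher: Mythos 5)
First, a point of reference: the paper does not prove this proposition at all --- it is quoted from Menni's thesis (``This is Prop 8.1.8 in op.cit.''), so your attempt has to stand entirely on its own, and it does not. The direction from a generic mono to a generic object rests on a definition of ``generic mono'' that is not the one in play. The notion consumed by Theorem \ref{th:menni} (a regular LCCC with a generic mono) must make sense with no chaotic situation around, so it cannot mention pre-embeddings; it is pullback-classification: a mono $\theta : \Theta' \rightarrowtail \Theta$ such that every mono $m : A \rightarrowtail B$ is isomorphic, as a subobject of $B$, to $f^{\ast}\theta$ for \emph{some} morphism $f : B \to \Theta$. Nothing makes the legs of the classifying square pre-embeddings, so your ``nothing beyond unwinding the definition'' step fails: classifying $\mathrm{id}_X$ only yields some map $X \to \Theta'$. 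Concretely, in $\asma$ with $\Theta' = (\sepa, \mathrm{id})$ and $\Theta = \Delta(\sepa \times 2)$, the identity of $X$ is classified by the constant map $x \mapsto (\top, 1)$, whose top leg $x \mapsto \top$ is a pre-embedding only when $\bigM_{x \in X}\left(\top \rightarrow \exi{X}(x)\right) \in \sepa$, which fails in general. The content of this direction is precisely what you skip: one classifies the unit $\eta_X : X \to \Delta|X|$ (monic in the situations at hand) rather than $\mathrm{id}_X$, and then \emph{proves} that the top leg of such a classifying square is a pre-embedding, using that the corner $\Delta|X|$ is chaotic.

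For the converse, which you yourself identify as carrying all the weight, you give only a description (``cut out \dots a genuine subobject of a suitable power of $\Upsilon$''), and the description points away from the construction that works: powers of $\Upsilon$ are not the right raw material. The missing ingredient is the subobject classifier $\mathrm{true} : 1 \to \Omega$ of the base $\mathbf{S}$, imported into $\mathbb{C}$ by $\Delta$. The generic mono is (essentially) $\left\langle \eta_{\Upsilon}, \Delta(\mathrm{true}) \circ \, ! \right\rangle : \Upsilon \to \Delta|\Upsilon| \times \Delta\Omega$: given $m : A \rightarrowtail B$, the $\Omega$-component classifies the subobject $|m|$ of $|B|$ and carves out the ``carrier'' of $A$ inside $B$, while the $|\Upsilon|$-component, obtained by extending $|e|$ along $|m|$ where $e : A \to \Upsilon$ is the pre-embedding supplied by genericity, recovers the structure of $A$; that the pullback of this mono along the transpose $B \to \Delta(|\Upsilon| \times \Omega)$ is $A$ again uses the pre-embedding property of $e$ and the fact that $\Delta$, being a right adjoint, preserves pullbacks. (In $\asma$ this is exactly how the generic mono arises from the generic object $(\sepa, \mathrm{id}_{\sepa})$ of Remark \ref{rem:chaotic}.) Your preliminary calculus of pre-embeddings --- identities, composition, cancellation --- is correct, but it cannot substitute for these two constructions, which are where the proposition actually lives.
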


This is Prop 8.1.8 in op.cit.

\begin{theorem}[Menni]
  \label{th:menni}Let $\mathbb{D}$ be a regular LCCC with generic mono. Then
  $\mathbb{D}_{\tmop{ex} / \tmop{reg}}$ is a topos.
\end{theorem}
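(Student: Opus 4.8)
The plan is to verify the three defining ingredients of an elementary topos for $\exreg{\mathbb{D}}$: finite limits, cartesian closure, and a subobject classifier. The first comes for free, since $\exreg{\mathbb{D}}$ is exact by construction and hence in particular lex. The remaining two must be transported from $\mathbb{D}$: cartesian closure from the fact that $\mathbb{D}$ is an LCCC, and the subobject classifier from the generic mono. Throughout I would exploit that the objects of $\exreg{\mathbb{D}}$ are equivalence relations $(X, R)$ in $\mathbb{D}$, that $\y : \mathbb{D} \rightarrow \exreg{\mathbb{D}}$ sends $X$ to the discrete relation $(X, \tmop{id})$, and that this embedding is full, lex, and covering, its image consisting of regular-projective objects.

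First I would establish local cartesian closure. The standard preservation result for the ex/reg construction says that when $\mathbb{D}$ is a regular LCCC, so is $\exreg{\mathbb{D}}$, with $\y$ preserving local exponentials. The mechanism is that every object of $\exreg{\mathbb{D}}$ is covered by an object in the image of $\y$, so exponentials of a quotient can be computed by descending the exponentials of its projective cover along the quotient map; exactness guarantees that the requisite quotients exist and are stable. I would invoke this preservation theorem rather than redo the computation, since cartesian closure (which is all the topos characterisation strictly needs) is a special case.

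The crux is the subobject classifier, and this is where the generic mono enters. In $\mathbb{D}$ the generic mono classifies every mono by pullback, but the classifying map need not be unique, as $\mathbb{D}$ is merely regular; the idea is to quotient away this defect in $\exreg{\mathbb{D}}$. Writing $\top : 1 \rightarrowtail \Omega$ for the generic mono (whose domain we may take terminal, using Proposition \ref{prop:menni}), I would define a relation $E \rightrightarrows \Omega$ in $\mathbb{D}$ relating two generalised points precisely when they pull $\top$ back to the same subobject; regularity lets me realise $E$ as a genuine equivalence relation via image factorisation, so that $\Omega' := (\Omega, E)$ is an object of $\exreg{\mathbb{D}}$ and $\top$ descends to a mono $\top' : 1 \rightarrowtail \Omega'$. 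The claim is that $\top'$ classifies subobjects. For existence one represents a subobject of $(X, R)$ by an $R$-closed strict subobject $U \rightarrowtail X$, classifies $U$ by the generic property to obtain $\chi_U : X \rightarrow \Omega$, and checks that $\chi_U$ respects $R$ and $E$, hence descends to $(X, R) \rightarrow \Omega'$. For uniqueness, two classifying maps cutting out the same subobject pull $\top$ back to the same subobject, so they are $E$-related and therefore equal in $\exreg{\mathbb{D}}$.

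The main obstacle is precisely this uniqueness clause, together with the bookkeeping that subobjects in the exact completion correspond bijectively to $R$-closed strict subobjects in $\mathbb{D}$ and that the passage back and forth is compatible with descent along $E$. Exactness is indispensable here: it is what converts the merely generic mono of $\mathbb{D}$ into a genuine classifier, by forcing the congruence $E$ to be effective so that $\Omega'$ exists as an honest quotient. Once finite limits, cartesian closure, and the subobject classifier $\top'$ are in hand, $\exreg{\mathbb{D}}$ meets the standard characterisation of an elementary topos, which completes the proof.
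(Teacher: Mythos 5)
First, a point of comparison: the paper does not actually prove this statement --- its ``proof'' is the citation ``This is Theorem 11.3.3 in op.cit.'', i.e.\ Menni's thesis. So your proposal has to be judged on its own merits rather than against an argument in the paper. Its overall architecture is indeed the right one, and close to Menni's: transport (local) cartesian closure along the ex/reg completion, then manufacture a subobject classifier by quotienting the codomain of the generic mono by the congruence ``classifies the same subobject''.

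There is, however, a genuine gap at precisely the point you yourself call the crux. You claim that ``regularity lets me realise $E$ as a genuine equivalence relation via image factorisation''. Regular structure only gives finite limits and images, i.e.\ existential quantification in the internal logic; but the relation $E \rightarrowtail \Omega \times \Omega$, relating $y_1, y_2$ exactly when $y_1^{\ast}\top$ and $y_2^{\ast}\top$ agree as subobjects, is defined by mutual \emph{inclusion} of subobjects --- a universally quantified implication. There is no map in sight whose image is $E$: to construct $E$ as a subobject of $\Omega \times \Omega$ in $\mathbb{D}$ one needs the dependent products (right adjoints to pullback) supplied by local cartesian closure. This is the second, and essential, use of the LCCC hypothesis, which your sketch spends only on cartesian closure of the completion. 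A second, smaller, misstep: your parenthetical claim that the generic mono may be taken with terminal domain ``using Proposition \ref{prop:menni}'' misreads that proposition, which asserts the equivalence of generic objects and generic monos in a chaotic situation and says nothing about domains. In fact no generic mono of $\asma$ has terminal domain: pullbacks of a mono $1 \rightarrowtail Z$ along $f : X \rightarrow Z$ only produce subobjects whose existence predicate is (isomorphic to) the restriction of $\exi{X}$, never a strictly stronger one, so such a mono cannot be generic. This slip is repairable --- one only needs the point $t : 1 \rightarrow \Omega$ classifying the identity subobject of $1$, and then $\top' \assign q \circ t$ --- but as written both the congruence $E$ and the truth map $\top'$ rest on misidentified mechanisms, and the former cannot be patched without invoking the LCCC structure in the way described above.
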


This is Theorem 11.3.3 in op.cit.

\begin{remark}
  \label{rem:chaotic}$(\mathcal{S}, \tmop{id}_{\mathcal{S}})$ is a generic
  object in the sense of Definition \ref{def:chaos}.
\end{remark}

\begin{theorem}
  $(\asma)_{\tmop{ex} / \tmop{reg}}$ is
  a topos.
\end{theorem}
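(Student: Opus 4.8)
The plan is to invoke Menni's theorem (Theorem \ref{th:menni}) directly, so the entire argument reduces to verifying its three hypotheses for $\mathbb{D} = \asma$: that it is regular, that it is locally cartesian closed, and that it admits a generic mono. Regularity is immediate since $\asma$ is a quasitopos (as noted in the Example following the regular-category definitions), and any quasitopos is in particular an LCCC, which dispatches the first two requirements at once. The only substantive point is the generic mono.

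First I would set up the chaotic situation on $\mathbb{C} = \asma$ in the sense of Definition \ref{def:chaos}. The forgetful functor $\Gamma : \asma \rightarrow \ens$ (the functor $|{-}|$ sending an assembly to its carrier) has a right adjoint $\Delta$, which sends a set $Z$ to the \emph{chaotic} assembly on $Z$ whose existence predicate is constantly the top element; this is the standard cofree construction, and checking $\Gamma \dashv \Delta$ is routine. Then by Proposition \ref{prop:menni} (Menni's Prop.~8.1.8), to produce a generic mono it suffices to exhibit a generic object, i.e.\ an object $\Upsilon$ such that every $X \in \asma$ admits a pre-embedding $X \rightarrow \Upsilon$.

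Here is where Remark \ref{rem:chaotic} does the work: it asserts that $(\mathcal{S}, \tmop{id}_{\mathcal{S}})$ — the separator $\sepa$ equipped with the identity predicate — is a generic object. So I would spell out why this is the right candidate. Given any assembly $X = (X, \exi{X})$ with $\exi{X} : X \rightarrow \sepa$, the existence predicate \emph{is} itself a map into the carrier of $\Upsilon = (\sepa, \tmop{id}_{\sepa})$, and I would check that it is tracked (the comparison $\exi{X} \sqsubseteq_X \exi{X}^{\ast}\tmop{id}_{\sepa}$ holds by functoriality, exactly as in the essential-surjectivity argument of Theorem \ref{th:dense}), and then verify that the defining square of Definition \ref{def:chaos}(2) is a pullback. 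Intuitively this holds because the generic object carries the ``tautological'' predicate, so pulling back along $\exi{X}$ recovers $\exi{X}$ on the nose; concretely one checks the square is a pullback on carriers via $\Gamma$ and that the existence predicates match, using that $\tmop{id}_{\sepa} = i_{\sepa}^{\ast}(\tmop{id}_{\mathcal{A}})$ together with the fact that $\tmop{id}_{\mathcal{A}}$ is a split generic object of the tripos $\tripa$.

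The main obstacle I anticipate is the pre-embedding verification, i.e.\ confirming that the relevant square is genuinely a pullback rather than merely a commuting square that factors through $\Delta$. The subtlety is that pre-embeddings are defined via the adjunction $\Gamma \dashv \Delta$, so the condition is really about the unit/counit interacting correctly with the chosen predicate; one must check the comparison map into the pullback is an \emph{iso} of assemblies, which means verifying it is tracked \emph{in both directions}. Once the generic object is established, Proposition \ref{prop:menni} upgrades it to a generic mono with no further effort, and Theorem \ref{th:menni} then yields that $(\asma)_{\tmop{ex}/\tmop{reg}}$ is a topos, completing the proof.
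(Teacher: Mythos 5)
Your proposal is correct and follows exactly the paper's argument: the paper's proof consists precisely of citing Proposition \ref{prop:menni}, Theorem \ref{th:menni} and Remark \ref{rem:chaotic}, which is the chain (chaotic situation via $\Gamma \dashv \Delta$, generic object $(\sepa, \tmop{id}_{\sepa})$, hence generic mono, hence topos after ex/reg) that you spell out. The additional detail you supply --- regularity and local cartesian closure from the quasitopos structure, and the pre-embedding verification behind Remark \ref{rem:chaotic} --- is exactly what the paper leaves implicit, so there is no divergence in approach.
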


\begin{proof}
  Proposition \ref{prop:menni}, Theorem \ref{th:menni} and Remark
  \ref{rem:chaotic}.
\end{proof}

\begin{corollary}
  \label{cor:ex-lex}Let $M$ be a generator. Then
  \[ \left( \mass \right)_{\tmop{ex} / \tmop{lex}} \simeq \left( \left(
     \mass \right)_{\tmop{reg} / \tmop{lex}} \right)_{\tmop{ex} /
     \tmop{reg}} \simeq
     (\asma)_{\tmop{ex} / \tmop{reg}}
  \]
  is a topos.
\end{corollary}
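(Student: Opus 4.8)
The plan is to assemble three ingredients already in place. First, since $M$ is a generator, Theorem \ref{th:hilbert} supplies an equivalence $\reglex{(\mass)} \simeq \asma$; the point to keep in mind is that this is an equivalence of \emph{regular} categories, witnessed by the exact functor $U$, so it is a genuine $1$-equivalence in $\Reg$ and not merely in $\Lex$.

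Second, I would exploit the factorisation of the ex/lex completion through the intermediate reg/lex stage. The forgetful $2$-functor $|-| : \Ex \to \Lex$ factors as $\Ex \to \Reg \to \Lex$, so a left biadjoint to the composite is the composite of the left biadjoints $(-)_{\tmop{reg} / \tmop{lex}}$ and $(-)_{\tmop{ex} / \tmop{reg}}$. By uniqueness of biadjoints up to equivalence this yields, naturally in a lex category $\mathbb{C}$, the first equivalence
\[ \exlex{\mathbb{C}} \simeq \exreg{\left( \reglex{\mathbb{C}} \right)} . \]
Instantiating at $\mathbb{C} = \mass$ gives the leftmost $\simeq$ of the corollary.

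Third, I would transport the equivalence of the first step along the ex/reg completion. Being a left biadjoint, $(-)_{\tmop{ex} / \tmop{reg}} : \Reg \to \Ex$ is in particular a pseudofunctor, and pseudofunctors preserve equivalences; it therefore sends $\reglex{(\mass)} \simeq \asma$ to an equivalence $\exreg{\left( \reglex{(\mass)} \right)} \simeq \exreg{(\asma)}$, which is the rightmost $\simeq$. Finally, the Theorem immediately preceding this corollary asserts that $\exreg{(\asma)}$ is a topos, and an equivalence transports this property, so the entire chain is a topos.

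The only step requiring real attention is verifying that the equivalence produced by Theorem \ref{th:hilbert} really lives in $\Reg$, i.e. that a pseudo-inverse of $U$ is again exact. This is automatic: an equivalence of regular categories one leg of which is exact has an exact pseudo-inverse, since regular epis and image factorisations are preserved and reflected by equivalences. Granting this, every step is a formal consequence of the composition of the two biadjunctions and the invariance of ``being a topos'' under equivalence, and no implicative-algebra computation enters.
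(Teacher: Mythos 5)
Your proposal is correct and is essentially the intended argument: the paper leaves this corollary without an explicit proof precisely because it follows formally from the composition of the biadjunctions $(-)_{\tmop{reg}/\tmop{lex}}$ and $(-)_{\tmop{ex}/\tmop{reg}}$, Theorem \ref{th:hilbert}, and the theorem immediately preceding the corollary, which are exactly your three ingredients. Your added check that the equivalence $\reglex{(\mass)} \simeq \asma$ lives in $\Reg$ is a sound (and correctly resolved) point of care that the paper glosses over.
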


\subsection{Tripos-to-Topos Construction}

\begin{definition}
  An {\tmem{implicative set}} $\eset{X}$ is a set equipped with a
  {\tmem{non-standard equality}}, that is a predicate $| - \approx - | : X
  \times X \rightarrow \mathcal{A}$ which is
  \begin{enumerateroman}
    \item {\tmem{symmetric:}} $\vld{\bigM_{x, x' \in X} \left(
    \eeq{x}{x'} \rightarrow \eeq{x'}{x} \right)}$;

    \item {\tmem{transitive:}} $ \vld{\bigM_{x, x', x'' \in X} \left(
    \eeq{x}{x'} \sqcap \text{ }  \eeq{x'}{x''} \rightarrow
    \eeq{x}{x''} \right)}$.
  \end{enumerateroman}
\end{definition}

Implicative sets are the objects of $\ens [\mathcal{A}]$. Notice that we do
not necessarily have {\tmem{reflexivity}}, that is $\eeq{x}{x} \in
\mathcal{S}$ for all $x \in X$. This is in a sense the main point of the
construction, as $\eeq{x}{x}$ is interpreted as an {\tmem{existence
predicate}}. Accordingly, we call {\tmem{ghosts}} elements $x \in X$ such that
$\eeq{x}{x} \nin \mathcal{S}$.

\begin{notation}
  {\tmdummy}

  \begin{itemizeminus}
    \item $\sym{X} \assign \bigM_{x, x' \in X} \left( \eeq{x}{x'}
    \rightarrow \eeq{x'}{x} \right)$;

    \item $\trans{X} \assign \bigM_{x, x', x'' \in X} \left(
    \eeq{x}{x'} \sqcap \text{ }  \eeq{x'}{x''} \rightarrow
    \eeq{x}{x''} \right)$;

    \item $\eexi{X} \assign \eeq{x}{x}$.
  \end{itemizeminus}
\end{notation}

\begin{definition}
  Let $\eset{X}$ and $\eset{Y}$ be implicative sets. A $\tmop{functional}
  \tmop{relation}$
  \[ F : \eset{X} \rightarrow \eset{Y} \]
  is a predicate $F : X \times Y \rightarrow \mathcal{A}$ which is
  \begin{enumerateroman}
    \item {\tmem{extensional:}} $ \vld{\bigM_{x, x' \in X}
    \bigM_{y, y' \in Y} \left( F (x, y) \sqcap \eeq{x}{x'} \sqcap
    \eeq{y}{y'} \rightarrow F (x', y') \right)}$;

    \item {\tmem{strict:}}
    $\vld{
    \bigM_{x \in X} \bigM_{y \in Y} \left( F(x, y)
    \rightarrow \eexi{X}(x) \sqcap \eexi{Y}(y) \right)
    }$;

    \item {\tmem{single-valued:}} $\vld{\bigM_{x \in X}
    \bigM_{y, y' \in Y} \left( F (x, y) \sqcap F (x, y') \rightarrow
    \eeq{y}{y'} \right)}$;

    \item {\tmem{total:}} $ \vld{\bigM_{x \in X} \left( \eexi{X} (x)
    \rightarrow \iex{y \in Y_0} \left( \eexi{Y} (y) \sqcap F (x, y) \right)
    \right)}$.
  \end{enumerateroman}
  Two functional relations $F, G : \eset{X} \rightarrow \eset{Y}$ are
  {\tmem{equivalent}} if they are isomorphic in $\mathbb{P} [\mathcal{A}]_{X
  \times Y}$.
\end{definition}

\begin{notation}
  {\tmdummy}

  \begin{itemizeminus}
    \item $\ext{F} \assign \bigM_{x, x' \in X} \bigM_{y, y'
    \in Y} \left( F (x, y) \sqcap \eeq{x}{x'} \sqcap \eeq{y}{y'}
    \rightarrow F (x', y') \right)$;

    \item $\str{F} \assign \bigM_{x \in X} \bigM_{y \in Y}
    \left( F (x, y) \rightarrow \eexi{X} (x) \sqcap \eexi{Y} (y) \right)$;

    \item $\sv{F} \assign \bigM_{x \in X} \bigM_{y, y' \in
    Y} \left( F (x, y) \sqcap F (x, y') \rightarrow \eeq{y}{y'} \right)$;

    \item $\tot{F} \assign \bigM_{x \in X} \left( \eexi{X} (x)
    \rightarrow \iex{y \in Y_0} \left( \eexi{Y} (y) \sqcap F (x, y) \right)
    \right)$.
  \end{itemizeminus}
\end{notation}

\begin{proposition}
  \label{prop:topos-morphisms}Two functional relations $f, g : \eset{X}
  \rightarrow \eset{Y}$ are equivalent if and only if
  \[ \vld{\bigM_{x \in X, y \in Y} (F (x, y) \rightarrow F (x, y))}
  \]
\end{proposition}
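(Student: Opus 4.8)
The plan is to read the asserted equivalence as the statement that a single entailment between functional relations is already an isomorphism in the preorder fibre $\mathbb{P}[\mathcal{A}]_{X \times Y}$: writing the two relations as $F, G : \eset{X} \rightarrow \eset{Y}$, the claim is that $F$ and $G$ are equivalent exactly when $\vld{\bigM_{x \in X, y \in Y}(F(x, y) \rightarrow G(x, y))}$. Since being equivalent means being isomorphic in the preorder fibre, i.e. possessing both entailments $F \sqsubseteq_{X \times Y} G$ and $G \sqsubseteq_{X \times Y} F$, the ``only if'' direction is immediate: an isomorphism supplies in particular the displayed entailment. The whole content therefore sits in the ``if'' direction, where from $F \sqsubseteq_{X \times Y} G$ I must manufacture the reverse entailment $G \sqsubseteq_{X \times Y} F$. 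This is the familiar phenomenon that an entailment between functional relations of the same type is automatically an isomorphism, and it is where $\str{G}$, $\tot{F}$, $\sv{G}$ and $\ext{F}$ all get used.

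First I would reason in the internal logic of the tripos, treating $\vld{-}$ as provability and handling the quantifiers $\bigM$ and $\siex$ through the adjunctions of $\mathbb{P}[\mathcal{A}]$. Assume $G(x, y)$. By strictness $\str{G}$ this yields $\eexi{X}(x)$, i.e. $\eeq{x}{x}$; note this is precisely the missing instance of reflexivity, and it is the reason the argument cannot dispense with $\str{G}$. By totality $\tot{F}$ applied to $\eexi{X}(x)$ there is some $y'$ with $\eexi{Y}(y')$ and $F(x, y')$. The hypothesis $F \sqsubseteq_{X \times Y} G$ gives $G(x, y')$, so that both $G(x, y)$ and $G(x, y')$ hold; single-valuedness $\sv{G}$ then yields $\eeq{y}{y'}$, whence $\eeq{y'}{y}$ by $\sym{Y}$. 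Finally extensionality $\ext{F}$, instantiated at the source pair $(x, y')$ and the target pair $(x, y)$ and fed the two equalities $\eeq{x}{x}$ and $\eeq{y'}{y}$ just obtained, transports $F(x, y')$ to $F(x, y)$. Since the conclusion $F(x, y)$ no longer mentions $y'$, discharging the existential produces $G(x, y) \vdash F(x, y)$ uniformly in $x$ and $y$, which is exactly $G \sqsubseteq_{X \times Y} F$.

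Concretely I would carry out each step as an entailment in the fibre over $X \times Y \times Y$ (the extra coordinate being the existential witness $y'$) and then reindex back along the projection forgetting $y'$, using that $\siex$ is left adjoint to that reindexing and that the final conclusion, being independent of $y'$, genuinely lives over $X \times Y$. The one delicate point---and the step I expect to be the main obstacle---is the correct placement of the existential: all of $F \sqsubseteq_{X \times Y} G$, $\sv{G}$ and $\ext{F}$ must first be weakened to the fibre over $X \times Y \times Y$ before they can be chained with the witness produced by $\tot{F}$, and one must verify that the resulting composite entailment is uniform in $y'$ so that the adjunction for $\siex$ applies. Everything else is a routine manipulation of $\sqcap$ and $\rightarrow$ in the implicative structure; in particular the absence of reflexivity causes no trouble once $\eexi{X}(x)$ has been extracted from $\str{G}$ at the very first step.
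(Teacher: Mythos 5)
The paper states this proposition without proof --- it is standard tripos-theoretic lore going back to Hyland--Johnstone--Pitts --- so there is no argument in the paper to compare yours against; what can be judged is whether your blind reconstruction is sound, and it is. You also correctly diagnosed the typo in the statement: as printed, $\bigM_{x \in X, y \in Y}(F(x,y) \rightarrow F(x,y))$ is realized by the identity, so the literal claim would make any two parallel functional relations equivalent; the intended reading is $F(x,y) \rightarrow G(x,y)$, i.e.\ a single entailment in the fibre $\mathbb{P}[\mathcal{A}]_{X \times Y}$ already forces the isomorphism. Your derivation is the classical one and each step checks out against the paper's definitions: strictness of $G$ supplies $\eexi{X}(x)$ (indeed the only available surrogate for the missing reflexivity), totality of $F$ produces a witness $y'$ with $\eexi{Y}(y') \sqcap F(x,y')$, the hypothesis $F \sqsubseteq_{X \times Y} G$ turns this into $G(x,y')$, single-valuedness of $G$ gives $\eeq{y}{y'}$, symmetry of $\approx_Y$ reverses it, and extensionality of $F$ instantiated at source $(x,y')$ and target $(x,y)$ transports $F(x,y')$ to $F(x,y)$. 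Your handling of the existential is also the right way to make this precise in the implicative setting: all the side conditions must first be weakened to the fibre over $X \times Y \times Y$, and the discharge of $y'$ is legitimate precisely because the conclusion $F(x,y)$ does not mention it, which is the adjunction (or, concretely, the elimination principle for $\siex$ recorded in Remark \ref{rem:eta}) at work. The only cosmetic remark is that in the implicative tripos one ultimately wants an explicit realizer built from trackers of $\str{G}$, $\tot{F}$, $\sv{G}$, $\sym{Y}$, $\ext{F}$ and the hypothesis; your internal-logic argument guarantees one exists by soundness, which is enough, and is in the same spirit as the paper's own habit of alternating between internal reasoning and explicit $\lambda$-terms.
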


\subsection{The Exact Completion of $\mass$ in
Context}

\

Let $M \subseteq \sepa$ be a generator. We already observed that $\left(
\mass \right)_{\tmop{ex} / \tmop{lex}}$ is a topos (c.f. Corollary
\ref{cor:ex-lex}), so the question is now if it is the same topos as $\ens
[\mathcal{A}]$.

\begin{notation}
  In what follows we make systematic use $\lambda$-terms, some of them with
  relatively long arguments lists containing dummy arguments. We shall use the
  symbol {\dum} for dummies in order to improve readability.
\end{notation}

\subsubsection{A fully faithful functor $\mathcal{K}$.}

\begin{remark}
  \label{rem:ex-lex-val}A pseudo-groupoid $X$ induces a valuation
  \begin{eqnarray*}
    \nu_X : X_0 \times X_0 & \longrightarrow & \ppows{M}\\
    (x, x') & \mapsto & \left\{ \exi{X_1} (e) |e \in X (x, x') \right\}
  \end{eqnarray*}
\end{remark}

\begin{lemma}
  \label{lem:impl-set}Let $X$ be a pseudo-groupoid. $X_0$ equipped with the
  relation
  \begin{eqnarray*}
    \ieeq{x}{x'}{X} & \assign & \exi{X_0} (x) \sqcap \exi{X_0} (x') \sqcap
    \siex \nu_X (x, x')
  \end{eqnarray*}
  is an implicative set.
\end{lemma}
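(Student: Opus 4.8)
The goal is to verify that the binary predicate
\[
\ieeq{x}{x'}{X} \;\assign\; \exi{X_0}(x) \sqcap \exi{X_0}(x') \sqcap \siex\nu_X(x,x')
\]
turns $X_0$ into an implicative set, i.e.\ that it is symmetric and transitive in the sense of the definition of implicative set. The plan is to exhibit explicit trackers (realisers) for the two required entailments, exploiting the pseudo-groupoid structure $\sigma_X$ and $\tau_X$ on $X$ together with the fact (Remark \ref{rem:eta}) that $\siex U$ is entailed by $\ilam{z}{z\,u}$ for each $u \in U$, and that conversely a realiser of $a \to \siex U$ applied to $a' \preccurlyeq a$ yields a realiser $\preccurlyeq \ilam{z}{z\,u}$ for some $u \in U$.

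First I would treat \emph{symmetry}. Here the work is entirely in the $\siex\nu_X$ component, since the two $\sqcap$-conjuncts $\exi{X_0}(x) \sqcap \exi{X_0}(x')$ simply commute. By Remark \ref{rem:ex-lex-val}, $\siex\nu_X(x,x') = \siex\{\exi{X_1}(e) \mid e \in X(x,x')\}$, and the involution $\sigma_X : X_1 \to X_1$ sends each edge $e : x \rightsquigarrow x'$ to an edge $\sigma_X(e) : x' \rightsquigarrow x$ (Remark \ref{rem:edge}). Since $\sigma_X$ is tracked, the inequality $\exi{X_1}(e) \preccurlyeq \exi{X_1}(\sigma_X(e))$ holds uniformly, so a realiser witnessing $\exi{X_1}(e)$-membership of $\nu_X(x,x')$ can be transported to a realiser witnessing membership of $\nu_X(x',x)$. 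Using Remark \ref{rem:eta}(2) to open up the input $\siex\nu_X(x,x')$, extract the underlying edge-realiser, reroute through the tracker of $\sigma_X$, and repackage via Remark \ref{rem:eta}(1), one obtains a single $\lambda$-term (independent of $x,x'$) tracking $\siex\nu_X(x,x') \to \siex\nu_X(x',x)$, whence $\sym{X} \in \sepa$.

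Next I would treat \emph{transitivity}, which is the main obstacle and uses $\tau_X$. One must realise
\[
\bigM_{x,x',x'' \in X_0}\bigl(\ieeq{x}{x'}{X} \sqcap \ieeq{x'}{x''}{X} \rightarrow \ieeq{x}{x''}{X}\bigr).
\]
The $\exi{X_0}(x) \sqcap \exi{X_0}(x'')$ part of the target is available from the outer conjuncts of the two hypotheses, so the crux is producing a realiser of $\siex\nu_X(x,x'')$ from realisers of $\siex\nu_X(x,x')$ and $\siex\nu_X(x',x'')$. Given edges $e : x \rightsquigarrow x'$ and $e' : x' \rightsquigarrow x''$, composability requires $s_X(e') = x' = t_X(e)$, so $(e,e')$ lies in the pullback $X_1 \circledast X_1$; then $\tau_X(e,e') : x \rightsquigarrow x''$ is an edge of $X(x,x'')$, and since $\tau_X$ is tracked we get $\exi{X_1}(e) \sqcap \exi{X_1}(e') \preccurlyeq \exi{X_1}(\tau_X(e,e'))$. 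The delicate point is that $\siex$ encodes an \emph{existential} over edges, so I cannot name $e$ and $e'$ directly; instead I open both existential hypotheses simultaneously using Remark \ref{rem:eta}(2), feed the paired edge-realisers through the tracker of $\tau_X$ (which is legitimate precisely because the matching condition on source/target is discharged by the fact that both hypotheses concern the shared middle index $x'$), and re-close the resulting edge-realiser into $\siex\nu_X(x,x'')$ via Remark \ref{rem:eta}(1). Assembling this with the projections handling the $\exi{X_0}$ conjuncts yields a uniform tracker, so $\trans{X} \in \sepa$ and $(X_0, |{\cdot}\approx{\cdot}|_X)$ is an implicative set.
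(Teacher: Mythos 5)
Your proof is correct and essentially the same as the paper's: both establish $\vld{\sym{X}}$ and $\vld{\trans{X}}$ by opening the existential $\siex \nu_X$ via Remark \ref{rem:eta}, transporting the resulting edge-realiser through a tracker of $\sigma_X$ (resp.\ of $\tau_X$, after pairing over the shared middle vertex), re-closing with $\lam{z}{z(-)}$, and handling the $\exi{X_0}$-conjuncts by rearrangement. Note only that trackedness of $\sigma_X$ yields a uniform $\xi$ with $\xi\,\exi{X_1}(e) \preccurlyeq \exi{X_1}(\sigma_X(e))$, rather than the literal inequality $\exi{X_1}(e) \preccurlyeq \exi{X_1}(\sigma_X(e))$ you assert (likewise for $\tau_X$); since your construction in fact routes realisers through the trackers, this is a slip of wording, not of substance.
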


\begin{proof}
  We have
  \begin{eqnarray*}
    \sym{X} & = & \bigM_{x, x' \in X_0} \left( \exi{X_0}
    (x) \sqcap \exi{X_0} (x') \sqcap \siex \nu_X (x, x') \rightarrow \exi{X_0}
    (x') \sqcap \exi{X_0} (x) \sqcap \siex \nu_X (x', x) \right)
  \end{eqnarray*}
  Let $e : x \rightsquigarrow x' \in X_1$ be an edge. We then have the
  following subquiver of $X$

  \begin{center}
    \raisebox{-0.5\height}{\includegraphics[width=14.8909550045914cm,height=1.78518955791683cm]{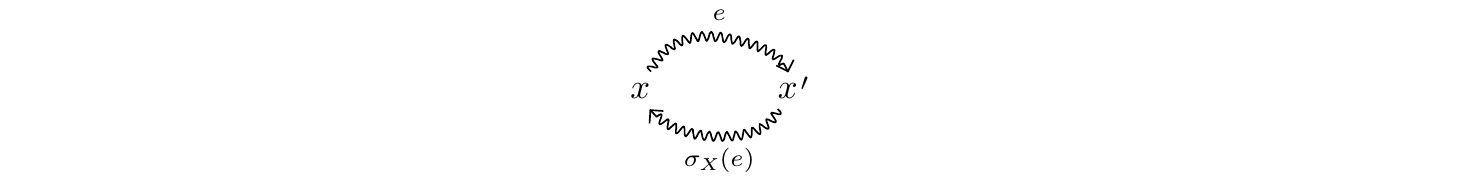}}
  \end{center}

  \

  with the ``opposite'' edge $\sigma_X (e) : x' \rightsquigarrow x$. Let
  $\xi$ be a tracker of $\sigma_X$. We then have
  \begin{eqnarray*}
    \lam{u \nocomma v \nocomma w}{} u \sqcap v \sqcap \intpr{\lam{z}{z \left(
    \xi \left( w \ide \right) \right)}} & \preccurlyeq & \sym{X}
  \end{eqnarray*}
  and therefore $\vld{\sym{X}}$. Next we have
  \begin{eqnarray*}
    \trans{X} & = & \bigM_{x, x', x'' \in X_0} (\\
    &  & \text{\qquad} \exi{X_0} (x) \sqcap \exi{X_0} (x') \sqcap \siex \nu_X
    (x, x') \sqcap\\
    &  & \text{\qquad} \exi{X_0} (x') \sqcap \exi{X_0} (x'') \sqcap \siex
    \nu_X (x', x'')\\
    &  & \text{\qquad} \rightarrow\\
    &  & \text{\qquad} \text{$\exi{X_0} (x) \sqcap \exi{X_0} (x'') \sqcap
    \siex \nu_X (x, x'')$}\\
    &  & \left. \text{ } \right)
  \end{eqnarray*}
  Let $e_1 : x \rightsquigarrow x'$ and $e_2 : x' \rightsquigarrow x''$ be
  edges. We then have the following subquiver of $X$

  \begin{center}
    \raisebox{-0.50000516673039\height}{\includegraphics[width=14.8909550045914cm,height=1.58685884822248cm]{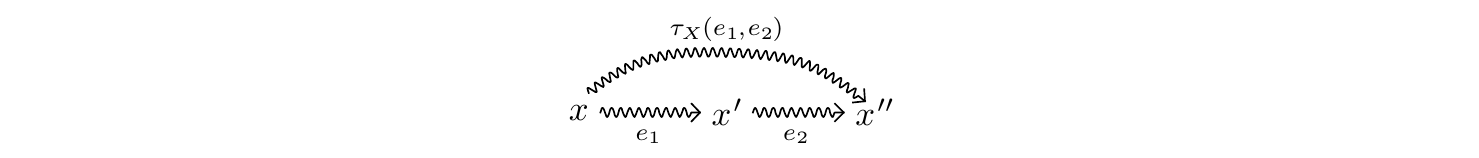}}
  \end{center}

  with the ``composed'' edge $\tau_X (e_1, e_2) : x \rightsquigarrow x''$. Let
  $\chi$ be a tracker of $\tau_X$. We then have
  \begin{eqnarray*}
    \lam{u \dum w \dum v w'}{u \sqcap v \sqcap \intpr{\lam{z}{\chi \left(
    \left( w \ide \right) \sqcap \left( w'  \ide \right) \right)}}} &
    \preccurlyeq & \trans{X}
  \end{eqnarray*}
  and therefore $\vld{\trans{X}}$.
\end{proof}

Let $f : X \rightarrow Y$ be a morphism of pseudo-groupoids. The relation
$\Rf$ from $(X, \approx_X)$ to $(Y, \approx_Y)$ is given by
\begin{eqnarray*}
  \Rf (x, y) & \assign & \exi{X_0} (x) \sqcap \exi{Y_0} (y) \sqcap
  \siex \nu_Y (y, f_0 (x))
\end{eqnarray*}
\begin{remark}
  \label{rem:rf-values}Let $(x, y) \in X \times Y$. We have
  $\vld{\Rf (x, y)}$ if and only if there is an edge $e : y'
  \rightsquigarrow f_0 (x)$ in $Y$ and $\Rf (x, y) = \bot$
  otherwise.
\end{remark}

\begin{remark}
  \label{rem:rf-implies}Let $\mathsf{Rf} \assign \bigM_{x \in X_0, y
  \in Y_0} \left( \Rf (x, y) \rightarrow \ieeq{y}{f_0 (x)}{Y}
  \right)$. We have
  \begin{eqnarray*}
    \mathsf{Rf} & = & \bigM_{x \in X_0, y \in Y_0} (\\
    &  & \text{\qquad} \exi{X_0} (x) \sqcap \exi{Y_0} (y) \sqcap \siex \nu_Y
    (y, f_0 (x))\\
    &  & \text{\qquad} \rightarrow\\
    &  & \text{\qquad} \exi{Y_0} (y) \sqcap \exi{Y_0} (f_0 (x)) \sqcap \siex
    \nu_Y (y, f_0 (x))\\
    &  & \left. \text{ } \right)
  \end{eqnarray*}
  Suppose $\xi$ tracks $f_0$. Then
  \begin{eqnarray*}
    \intpr{\lam{u \nocomma v \nocomma w}{v \sqcap (\xi u) \sqcap} w} &
    \preccurlyeq & \mathsf{Rf}
  \end{eqnarray*}
  and therefore $\vld{Rf}$.
\end{remark}

\begin{lemma}
  \label{lem:ext}$\Rf$ is extensional.
\end{lemma}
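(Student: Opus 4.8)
The plan is to unfold the extensionality formula $\ext{\Rf}$ and realise it by an explicit tracker, exactly in the style of the symmetry and transitivity computations of Lemma~\ref{lem:impl-set}. Expanding the definitions of $\Rf$ and of the two non-standard equalities $\ieeq{-}{-}{X}$, $\ieeq{-}{-}{Y}$, the antecedent
\[
\Rf(x, y) \sqcap \ieeq{x}{x'}{X} \sqcap \ieeq{y}{y'}{Y}
\]
is a conjunction whose conjuncts already include $\exi{X_0}(x')$ and $\exi{Y_0}(y')$ --- precisely the first two conjuncts of the target $\Rf(x', y')$ --- together with the three edge-witnesses $\siex\nu_Y(y, f_0(x))$, $\siex\nu_X(x, x')$ and $\siex\nu_Y(y, y')$. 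Hence the only conjunct of $\Rf(x', y')$ that is not immediately present is $\siex\nu_Y(y', f_0(x'))$, i.e.\ a witness of an edge $y' \rightsquigarrow f_0(x')$ in $Y$.

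First I would verify at the level of edges that such an edge is canonically assembled from the ones at hand. From $e_0 : y \rightsquigarrow f_0(x)$, $e_1 : x \rightsquigarrow x'$ and $e_2 : y \rightsquigarrow y'$ we get $\sigma_Y(e_2) : y' \rightsquigarrow y$ and, since $f$ is a quiver morphism so $f_1$ respects $s$ and $t$, also $f_1(e_1) : f_0(x) \rightsquigarrow f_0(x')$. The two $\tau_Y$-composites
\[
\tau_Y(\sigma_Y(e_2),\, e_0) : y' \rightsquigarrow f_0(x), \qquad \tau_Y\bigl(\tau_Y(\sigma_Y(e_2),\, e_0),\, f_1(e_1)\bigr) : y' \rightsquigarrow f_0(x')
\]
then produce exactly the desired edge. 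Conceptually this is nothing but the chain $y' \approx_Y y \approx_Y f_0(x) \approx_Y f_0(x')$, the middle step being Remark~\ref{rem:rf-implies} and the outer steps symmetry and the action of $f$ on edges.

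To turn this into a tracker I would, writing $\xi_\sigma$, $\xi_\tau$ and $\phi$ for trackers of $\sigma_Y$, $\tau_Y$ and $f_1$, split the antecedent into its conjuncts by nested $\sqcap$-projections, retain the witnesses of $\exi{X_0}(x')$ and $\exi{Y_0}(y')$, and apply each edge-witness to $\ide$ so as to expose an underlying realiser of $\exi{Y_1}(e_0)$, $\exi{X_1}(e_1)$, $\exi{Y_1}(e_2)$ --- the move used in the $\xi(w\,\ide)$ step of Lemma~\ref{lem:impl-set} and licensed by Remark~\ref{rem:eta}. Correcting these realisers by $\phi$ (for $f_1(e_1)$) and by $\xi_\sigma$ (for $\sigma_Y(e_2)$), feeding them into $\xi_\tau$ in the nesting displayed above, and re-wrapping the outcome as $\ilam{z}{z(\cdots)}$ gives a realiser of $\siex\nu_Y(y', f_0(x'))$; conjoined with the retained witnesses this yields a term $\preccurlyeq \ext{\Rf}$, whence $\vld{\ext{\Rf}}$.

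I expect the main obstacle to be purely bookkeeping: threading the several nested $\sqcap$-projections of the antecedent correctly and composing the structural trackers in the right order, handling the dummy slots of $\xi_\tau$ as in the $\chi$-term of the transitivity computation. There is no conceptual difficulty once the edge composition above is fixed; the cabling of $\lambda$-terms is routine given Remark~\ref{rem:eta} and the trackers supplied by the pseudo-groupoid structure of $Y$ and by $f$.
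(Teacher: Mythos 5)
Your proposal follows the paper's proof essentially verbatim: the same unfolding of $\ext{\Rf}$, the same observation that only the conjunct $\siex\nu_Y(y', f_0(x'))$ needs to be manufactured, the same edge composition $\tau_Y\bigl(\tau_Y(\sigma_Y(e_2), e_0), f_1(e_1)\bigr) : y' \rightsquigarrow f_0(x')$, and the same tracker strategy (project out the retained witnesses, expose edge realisers via application to $\ide$ as in Remark~\ref{rem:eta}, correct with the trackers of $\sigma_Y$, $\tau_Y$, $f_1$, and re-wrap with $\ilam{z}{z(\cdots)}$). This matches the paper's term $\intpr{\lam{\dum\dum w \dum v w' \dum v' w''}{v \sqcap v' \sqcap \lam{z}{z(\varpi(\varpi(\kappa(w''\ide)) \sqcap (w\,\ide) \sqcap (\chi(w'\ide))))}}} \preccurlyeq \ext{\Rf}$ exactly, so the proposal is correct and takes the same route.
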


\begin{proof}
  We have
  \begin{eqnarray*}
    \ext{\Rf} & = & \bigM_{x, x' \in X_0}
    \bigM_{y, y' \in Y_0} (\\
    &  & \text{{\hspace{3em}}$\exi{X_0} (x) \sqcap$} \exi{Y_0} (y) \sqcap
    \siex \nu_Y (y, f_0 (x)) \sqcap\\
    &  & \text{{\hspace{3em}}} \exi{X_0} (x) \sqcap \exi{X_0} (x') \sqcap
    \siex \nu_X (x, x') \sqcap\\
    &  & \text{{\hspace{3em}}} \text{} \exi{Y_0} (y) \sqcap \exi{Y_0} (y')
    \sqcap \siex \nu_Y (y, y')\\
    &  & \text{{\hspace{3em}}} \rightarrow\\
    &  & \text{{\hspace{3em}}} \text{$\exi{X_0} (x') \sqcap$} \exi{Y_0} (y')
    \sqcap \siex \nu_Y (y', f_0 (x'))\\
    &  & \left. \text{ } \right)
  \end{eqnarray*}
  Let $x, x'' \in X_0$ be vertices of $X$ and $y, y' \in Y_0$ be vertices of
  $Y$. Let $e : x \rightsquigarrow x'$ be an edge of $X$. Let $e_1 : y
  \rightsquigarrow f_0 (x)$ and $e_2 : y \rightsquigarrow y'$ be edges of $Y$
  (notice that besides their ``types'' we did not make any hypotheses on the
  vertices and edges in question). We then have the following subquiver of $Y$

  \begin{center}
    \raisebox{-0.5\height}{\includegraphics[width=14.8909550045914cm,height=4.26410205955661cm]{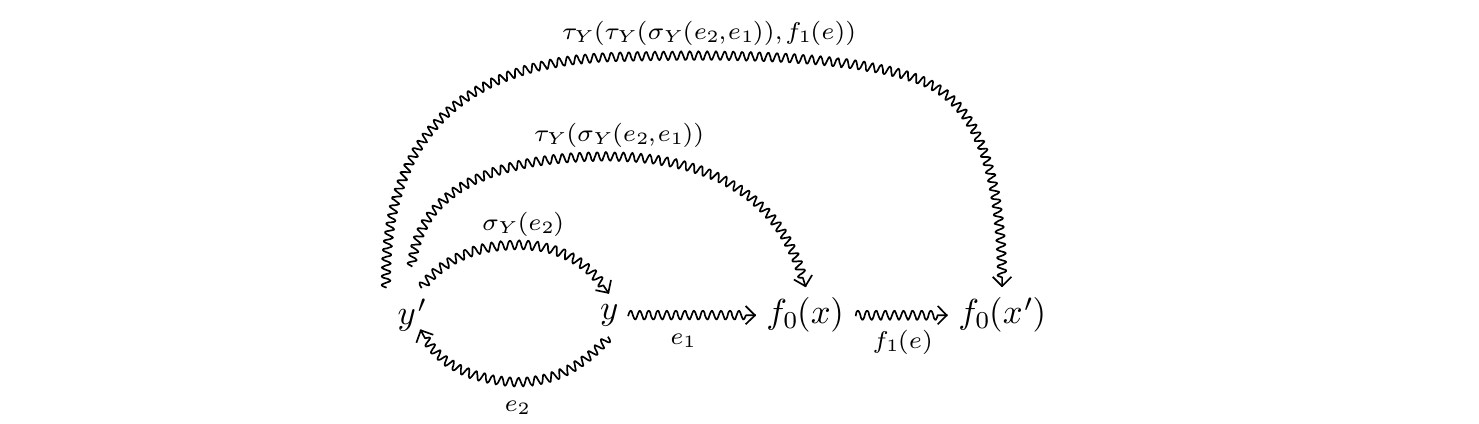}}
  \end{center}

  \

  Suppose $\chi$ tracks $f_1$, $\kappa$ tracks $\sigma_Y$ and $\varpi$ tracks
  $\tau_Y$. Then
  \begin{eqnarray*}
    \intpr{\lam{\dum  \dum w \dum v w'  \dum v' w''}{} v \sqcap v' \sqcap
    \lam{z}{z \left( \varpi \left( \left( \varpi \left( \kappa \left( w''
    \ide \right) \right) \sqcap \left( w \ide \right) \sqcap \left( \chi
    \left( w'  \ide \right) \right) \right) \right) \right)}} & \preccurlyeq &
    \ext{\Rf}
  \end{eqnarray*}
  and therefore $\vld{\ext{\Rf}}$.
\end{proof}

\begin{lemma}
  \label{lem:str}$\Rf$ is strict.\quad
\end{lemma}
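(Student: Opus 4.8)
The plan is to unfold $\str{\Rf}$ via Lemma~\ref{lem:impl-set} and exhibit a single closed tracker, exactly as in the proofs of Lemmas~\ref{lem:impl-set} and~\ref{lem:ext}. Recalling that $\eexi{X}(x) = \exi{X_0}(x) \sqcap \exi{X_0}(x) \sqcap \siex \nu_X(x,x)$ and symmetrically for $Y$, strictness asks for a realizer lying below
\[
  \bigM_{x \in X_0,\, y \in Y_0}\left( \exi{X_0}(x) \sqcap \exi{Y_0}(y) \sqcap \siex \nu_Y(y, f_0(x)) \;\rightarrow\; \eexi{X}(x) \sqcap \eexi{Y}(y) \right).
\]
So given a realizer of the premise, destructured as a triple $(u, v, w)$ with $u$ below $\exi{X_0}(x)$, $v$ below $\exi{Y_0}(y)$ and $w$ below $\siex \nu_Y(y, f_0(x))$, I must manufacture a realizer of $\eexi{X}(x) \sqcap \eexi{Y}(y)$.

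The key observation is that the conclusion decomposes into the two endpoint existence predicates $\exi{X_0}(x)$, $\exi{Y_0}(y)$ --- which are handed to us directly as $u$ and $v$ --- together with the two reflexive ``loops'' $\siex \nu_X(x,x)$ and $\siex \nu_Y(y,y)$. In particular the relational datum $w$ plays no role whatsoever, so it becomes a dummy $\dum$. To produce the loops I would invoke the reflexivity morphisms $\rho_X : X_0 \rightarrow X_1$ and $\rho_Y : Y_0 \rightarrow Y_1$ of the pseudo-groupoids: since $s_X \circ \rho_X = t_X \circ \rho_X = \tmop{id}$, the value $\rho_X(x)$ is an edge $x \rightsquigarrow x$, whence $\exi{X_1}(\rho_X(x)) \in \nu_X(x,x)$. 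If $\theta$ tracks $\rho_X$, then $\theta u$ realizes this value, and $\ilam{z}{z(\theta u)}$ realizes $\siex \nu_X(x,x)$ by Remark~\ref{rem:eta}(1); symmetrically with a tracker $\vartheta$ of $\rho_Y$ and $v$. Assembling the pieces, the term
\[
  \intpr{\lam{u \nocomma v \nocomma \dum}{\left(u \sqcap u \sqcap \lam{z}{z(\theta u)}\right) \sqcap \left(v \sqcap v \sqcap \lam{z}{z(\vartheta v)}\right)}} \;\preccurlyeq\; \str{\Rf}
\]
should witness $\vld{\str{\Rf}}$.

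I do not expect any genuine obstacle here: once the two reflexive edges are in place the verification is a routine unfolding of $\sqcap$ and $\siex$ in the style already rehearsed for symmetry and extensionality. The only conceptual point worth flagging is that strictness is, in this encoding, essentially \emph{automatic}: unlike extensionality it needs nothing about $f$ and does not even consume the edge witness $w$, reflecting that $\eexi{X}$ is recoverable from the bare existence predicate $\exi{X_0}$ of the underlying quiver together with the reflexivity structure $\rho_X$ of the pseudo-groupoid (cf.\ Remark~\ref{rem:ex-lex-val} and Lemma~\ref{lem:impl-set}).
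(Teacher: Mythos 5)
Your proof is correct and is essentially the paper's own argument: both unfold $\str{\Rf}$, discard the edge witness $w$ as a dummy, and realize the reflexive loops $\siex \nu_X(x,x)$ and $\siex \nu_Y(y,y)$ by applying trackers of $\rho_X$ and $\rho_Y$ to $u$ and $v$ and wrapping the results as $\lam{z}{z(\theta u)}$, $\lam{z}{z(\vartheta v)}$. The only cosmetic difference is that you retain the duplicated conjunct in $\eexi{X}(x) = \exi{X_0}(x) \sqcap \exi{X_0}(x) \sqcap \siex \nu_X(x,x)$, whereas the paper's display collapses it and flattens the pairing in the tracker.
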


\begin{proof}
  We have
  \begin{eqnarray*}
    \str{\Rf} & = & \bigM_{x \in X_0} \bigM_{y
    \in Y_0} (\\
    &  & \text{\qquad$\exi{X_0} (x) \sqcap$} \exi{Y_0} (y) \sqcap \siex \nu_Y
    (y, f_0 (x))\\
    &  & \text{\qquad} \rightarrow\\
    &  & \text{\qquad} \exi{X_0} (x) \sqcap \siex \nu_X (x, x) \sqcap\\
    &  & \text{\qquad} \exi{Y_0} (y) \sqcap \siex \nu_Y (y, y)\\
    &  & \left. \text{ } \right)
  \end{eqnarray*}
  Suppose $\kappa$ tracks $\rho_X$ while $\kappa'$ tracks $\rho_Y$. Then
  \begin{eqnarray*}
    \intpr{\lam{u v \dum}{u \sqcap \lam{z}{z (\kappa \nocomma u)} \sqcap v
    \sqcap \lam{z}{z (\kappa' \nocomma v)}}} & \preccurlyeq &
    \str{\Rf}
  \end{eqnarray*}
  and therefore $\vld{\str{\Rf}}$.
\end{proof}

\begin{lemma}
  \label{lem:sv}$\Rf$ is single valued.
\end{lemma}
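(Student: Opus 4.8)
The plan is to follow the template of the transitivity calculation in Lemma~\ref{lem:impl-set} and the strictness/extensionality calculations in Lemmas~\ref{lem:str} and~\ref{lem:ext}, the only new ingredient being that one of the two edges must be reversed before composing. First I would unfold the notation: since
\[ \sv{\Rf} = \bigM_{x \in X_0} \bigM_{y, y' \in Y_0} \left( \Rf (x,y) \sqcap \Rf (x,y') \rightarrow \ieeq{y}{y'}{Y} \right), \]
substituting the definitions of $\Rf$ and of $\ieeq{y}{y'}{Y}$ shows that the antecedent is the conjunction
\[ \exi{X_0} (x) \sqcap \exi{Y_0} (y) \sqcap \siex \nu_Y (y, f_0 (x)) \sqcap \exi{X_0} (x) \sqcap \exi{Y_0} (y') \sqcap \siex \nu_Y (y', f_0 (x)), \]
while the consequent is $\exi{Y_0} (y) \sqcap \exi{Y_0} (y') \sqcap \siex \nu_Y (y, y')$. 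In particular both copies of $\exi{X_0} (x)$ are dummies, and the work reduces to producing, from a witness of $\siex \nu_Y (y, f_0 (x))$ and a witness of $\siex \nu_Y (y', f_0 (x))$, a witness of $\siex \nu_Y (y, y')$.

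The geometric idea is straightforward. A witness of $\siex \nu_Y (y, f_0 (x))$ delivers (by Remark~\ref{rem:eta}) an edge $e_1 : y \rightsquigarrow f_0 (x)$ and a witness of $\siex \nu_Y (y', f_0 (x))$ an edge $e_2 : y' \rightsquigarrow f_0 (x)$. Applying $\sigma_Y$ reverses the second to $\sigma_Y (e_2) : f_0 (x) \rightsquigarrow y'$, and since $e_1$ and $\sigma_Y (e_2)$ are composable, $\tau_Y$ produces $\tau_Y (e_1, \sigma_Y (e_2)) : y \rightsquigarrow y'$, i.e.\ an element of $Y (y, y')$ whose existence predicate witnesses $\siex \nu_Y (y, y')$. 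Concretely, letting $\kappa$ track $\sigma_Y$ and $\varpi$ track $\tau_Y$, and extracting edges by applying a witness to $\ide$ as in Remark~\ref{rem:eta}, I would exhibit the tracker
\[ \intpr{\lam{\dum v w \dum v' w'}{v \sqcap v' \sqcap \lam{z}{z \left( \varpi \left( (w \ide) \sqcap \kappa (w' \ide) \right) \right)}}} \preccurlyeq \sv{\Rf}, \]
where $v$, $v'$ carry $\exi{Y_0} (y)$, $\exi{Y_0} (y')$ into the consequent unchanged and the third conjunct is the wrapped composite edge; this yields $\vld{\sv{\Rf}}$.

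The only delicate points --- as in the earlier lemmas --- are bookkeeping rather than mathematics: keeping track of which of the six conjuncts of the antecedent is bound by which $\lambda$-variable, and checking that $e_1$ and $\sigma_Y (e_2)$ genuinely lie in the domain $X_1 \circledast X_1$ of $\tau_Y$ (their source and target match precisely because both $e_1$ and $e_2$ \emph{target} $f_0 (x)$, so reversing $e_2$ makes the pair composable). I expect the main risk to be purely notational, so I would double-check the argument order in the pairing $(w \ide) \sqcap \kappa (w' \ide)$ against the convention for $\tau_Y$ fixed in the transitivity proof before committing to it.
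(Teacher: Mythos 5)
Your proposal is correct and takes essentially the same route as the paper: the paper likewise unfolds $\sv{\Rf}$ into the six-conjunct antecedent, reverses the edge $e' : y' \rightsquigarrow f_0 (x)$ with $\sigma_Y$, composes with $\tau_Y$ to get an edge $y \rightsquigarrow y'$, and exhibits the identical tracker $\intpr{\lam{\dum v w \dum v' w'}{v \sqcap v' \sqcap \lam{z}{z \left( \varpi \left( \left( w \ide \right) \sqcap \left( \xi \left( w' \ide \right) \right) \right) \right)}}}$ (your $\kappa$ is the paper's $\xi$). The composability convention you flag to double-check is exactly the one fixed in the transitivity part of Lemma \ref{lem:impl-set}, so your pairing order $(w \ide) \sqcap \kappa (w' \ide)$ is the right one.
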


\begin{proof}
  We have
  \begin{eqnarray*}
    \sv{\Rf} & = & \bigM_{x \in X_0} \bigM_{y, y'
    \in Y_0} (\\
    &  & \text{\qquad} \text{$\exi{X_0} (x) \sqcap$} \exi{Y_0} (y) \sqcap
    \siex \nu_Y (y, f_0 (x)) \sqcap\\
    &  & \text{\qquad} \text{$\exi{X_0} (x) \sqcap$} \exi{Y_0} (y') \sqcap
    \siex \nu_Y (y', f_0 (x))\\
    &  & \text{\qquad} \rightarrow\\
    &  & \text{\qquad} \exi{Y_0} (y) \sqcap \exi{Y_0} (y') \sqcap \siex \nu_Y
    (y, y')\\
    &  & \left. \text{ } \right)
  \end{eqnarray*}
  Let $e : y \rightsquigarrow f_0 (x)$ and $e' : y' \rightsquigarrow f_0 (x)$
  be edges of $Y$. We then have the subquiver of $Y$

  \begin{center}
    \raisebox{-0.5\height}{\includegraphics[width=14.8909550045914cm,height=2.28095894004985cm]{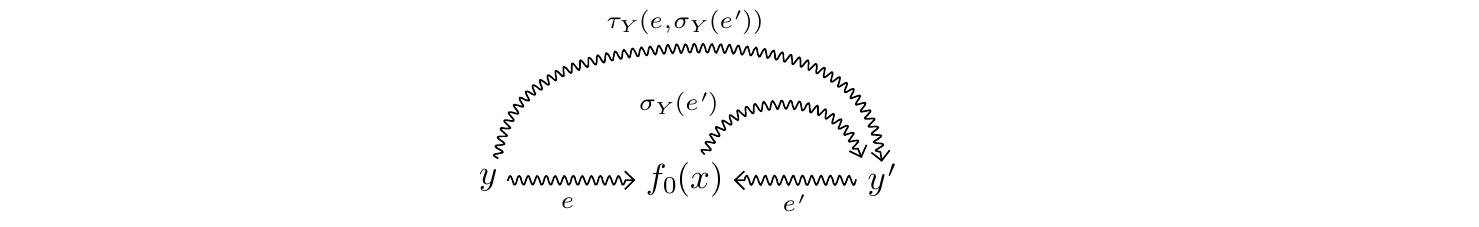}}
  \end{center}

  Suppose $\xi$ tracks $\sigma_Y$ and $\varpi$ tracks $\tau_Y$. Then
  \begin{eqnarray*}
    \intpr{\lam{\dum v w \dum v' w'}{v \sqcap v' \sqcap \lam{z}{z \left(
    \varpi \left( \left( w \ide \right) \sqcap \left( \xi \left( w'  \ide
    \right) \right) \right) \right)}}} & \preccurlyeq & \sv{\Rf}
  \end{eqnarray*}
  and therefore $\vld{\sv{\Rf}}$.
\end{proof}

\begin{lemma}
  \label{lem:tot}$\Rf$ is total.
\end{lemma}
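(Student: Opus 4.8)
The plan is to argue exactly as in the proofs of Lemmas \ref{lem:ext}, \ref{lem:str} and \ref{lem:sv}: unfold $\tot{\Rf}$ and produce a $\lambda$-term lying below it in the entailment preorder, so that $\tot{\Rf} \in \sepa$, i.e.\ $\vld{\tot{\Rf}}$. Unfolding gives
\[
  \tot{\Rf} \;=\; \bigM_{x \in X_0}\!\left( \eexi{X}(x) \rightarrow \iex{y \in Y_0}\!\left( \eexi{Y}(y) \sqcap \Rf(x,y) \right) \right),
\]
where $\eexi{X}(x) = \ieeq{x}{x}{X} = \exi{X_0}(x) \sqcap \exi{X_0}(x) \sqcap \siex \nu_X(x,x)$, $\eexi{Y}(y) = \exi{Y_0}(y) \sqcap \exi{Y_0}(y) \sqcap \siex \nu_Y(y,y)$, and $\Rf(x,y) = \exi{X_0}(x) \sqcap \exi{Y_0}(y) \sqcap \siex \nu_Y(y,f_0(x))$.

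The decisive move is the choice of witness for the existential $\iex{y \in Y_0}$. I would take $y \assign f_0(x) \in Y_0$. This is exactly the choice forced by Remark \ref{rem:rf-values}: for $\Rf(x,y)$ to be inhabited one needs an edge $y \rightsquigarrow f_0(x)$, and the only edge available with no further hypotheses is the self-loop $\rho_Y(f_0(x)) : f_0(x) \rightsquigarrow f_0(x)$ supplied by the reflexivity structure of the pseudo-groupoid $Y$. With $y = f_0(x)$ the two occurrences of $\siex \nu_Y(f_0(x),f_0(x))$ are witnessed through this self-loop (it belongs to $\nu_Y(f_0(x),f_0(x))$ by Remark \ref{rem:ex-lex-val}), while the occurrences of $\exi{Y_0}(f_0(x))$ are obtained from $\exi{X_0}(x)$ by tracking $f_0$.

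Concretely, let $\xi$ track $f_0$ and $\kappa$ track $\rho_Y$. Given a realiser $u$ of $\exi{X_0}(x)$ projected from the antecedent $\eexi{X}(x)$, the term $\xi u$ realises $\exi{Y_0}(f_0(x))$, and $\kappa(\xi u)$ realises $\exi{Y_1}(\rho_Y(f_0(x)))$, whence $\ilam{z}{z(\kappa(\xi u))} \preccurlyeq \siex \nu_Y(f_0(x),f_0(x))$ by Remark \ref{rem:eta}. Assembling these into a realiser $p$ of the sixfold meet $\eexi{Y}(f_0(x)) \sqcap \Rf(x,f_0(x))$ --- placing $\xi u$ in each slot demanding $\exi{Y_0}(f_0(x))$, $u$ in the slot demanding $\exi{X_0}(x)$, and $\ilam{z}{z(\kappa(\xi u))}$ in the two slots demanding $\siex \nu_Y(f_0(x),f_0(x))$ --- and applying Remark \ref{rem:eta} once more to pass from the value at $y=f_0(x)$ to the existential over $Y_0$, yields a term of the shape
\[
  \intpr{\lam{u \dum \dum}{\lam{z}{z\, p}}} \;\preccurlyeq\; \tot{\Rf},
\]
and therefore $\vld{\tot{\Rf}}$.

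The main obstacle---really the only conceptual content---is identifying the witness $y = f_0(x)$ together with the observation that reflexivity always furnishes the required self-loop; this is precisely what makes $\Rf$ total. Everything after that is the routine bookkeeping already seen in the previous three lemmas, namely threading realisers through the two nested layers of the $\siex$-encoding (the inner $\siex \nu_Y$ and the outer $\iex{y}$) via Remark \ref{rem:eta}. The only point where care is needed is matching the six conjuncts of $\eexi{Y}(f_0(x)) \sqcap \Rf(x,f_0(x))$ in the correct order when building $p$.
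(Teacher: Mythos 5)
Your proof is correct and follows essentially the same route as the paper's: unfold $\tot{\Rf}$, take the witness $y = f_0(x)$, use the self-loop $\rho_Y(f_0(x))$ together with trackers of $f_0$ and $\rho_Y$ to build a realiser, exactly as in the paper (which writes $\chi$ for your $\xi$). If anything, your explicit $\lam{z}{z\,p}$ wrapper for the outer existential is more scrupulous than the term displayed in the paper's proof, which elides that layer.
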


\begin{proof}
  We have
  \begin{eqnarray*}
    \tot{\Rf} & = & \bigM_{x \in X_0} (\\
    &  & \text{\qquad} \exi{X_0} (x) \sqcap \siex \nu_X (x, x)\\
    &  & \text{\qquad} \rightarrow\\
    &  & \text{\qquad} \iex{y \in Y_0} \left( \exi{Y_0} (y) \sqcap \siex
    \nu_Y (y, y) \sqcap \exi{X_0} (x) \sqcap \siex \nu_Y (y, f_0 (x))
    \right)\\
    &  & \left. \text{ } \right)
  \end{eqnarray*}
  Suppose $\chi$ tracks $f_0$ while $\kappa$ tracks $\rho_Y$. We know $y \in
  Y_0$ that fits, namely $f_0 (x)$. Then
  \begin{eqnarray*}
    \intpr{\lam{u \nocomma v}{(\chi u) \sqcap} \lam{z}{\kappa (\chi u)} \sqcap
    u \sqcap \lam{z}{\kappa (\chi u)}} & \preccurlyeq & \tot{\Rf}
  \end{eqnarray*}
  and therefore $\vld{\tot{\Rf}}$.
\end{proof}

\begin{proposition}
  \label{prop:K}The assignment
  \begin{eqnarray*}
    \mathcal{K}: \left( \mass \right)_{\tmop{ex} / \tmop{lex}} &
    \longrightarrow & \ens [\mathcal{A}]\\
    X & \mapsto & (X_0, \approx_X)
  \end{eqnarray*}
  extends to a faithful functor with action on morphisms $f : X \rightarrow Y$
  given by
  \begin{eqnarray*}
    \mathcal{K} (f) (x, y) & \assign & \Rf (x, y)
  \end{eqnarray*}
\end{proposition}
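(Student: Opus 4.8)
The plan is to verify three things beyond what has already been established: that $\mathcal{K}$ is well defined on morphisms, i.e.\ invariant under the homotopy relation on quiver morphisms; that it preserves identities and composition; and finally that it is faithful. Well-definedness on objects is Lemma \ref{lem:impl-set}, and Lemmas \ref{lem:ext}--\ref{lem:tot} already guarantee that each $\Rf$ is a genuine functional relation, so all the remaining work concerns the passage between homotopy classes of quiver morphisms on the source side and equivalence classes of functional relations on the target side. Throughout I will use that, for functional relations, a single entailment suffices to establish equivalence (Proposition \ref{prop:topos-morphisms}).

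First I would dispose of homotopy-invariance. Given a homotopy $h : f \Rightarrow g$ we have, by Remark \ref{rem:edge}, a family of edges $h(x) : f_0(x)\rightsquigarrow g_0(x)$ in $Y$. An edge witnessing the factor $\siex\nu_Y(y, f_0(x))$ of $\Rf(x,y)$ can be post-composed with $h(x)$ through $\tau_Y$ to produce an edge $y\rightsquigarrow g_0(x)$, which witnesses $\siex\nu_Y(y, g_0(x))$; the factors $\exi{X_0}(x)$ and $\exi{Y_0}(y)$ carry over verbatim. Assembling a tracker for $\bigM_{x, y}(\Rf(x,y)\to \mathfrak{R}_g(x,y))$ out of a tracker for $\tau_Y$ and the tracked map $h$ is then a routine variant of the computations in Lemmas \ref{lem:ext}--\ref{lem:tot}. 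Functoriality is of the same flavour: $\mathfrak{R}_{\tmop{id}_X}$ differs from the identity functional relation $\approx_X$ only by an application of $\sigma_X$, reversing the roles of the two arguments of $\siex\nu_X$; and for a composite $g\circ f$ one checks that $\iex{y\in Y_0}(\Rf(x,y)\sqcap \mathfrak{R}_g(y,z))$ and $\mathfrak{R}_{g\circ f}(x,z)$ entail one another, the forward direction applying a tracker of $g_1$ to an edge $y\rightsquigarrow f_0(x)$ and then composing through $\tau_Z$, the backward direction taking $y\assign f_0(x)$ and inserting the loop $\rho_Y(f_0(x))$.

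The heart of the proposition is faithfulness. Suppose $\Rf$ and $\mathfrak{R}_g$ are equivalent, so in particular $\vld{\bigM_{x, y}(\Rf(x,y)\to \mathfrak{R}_g(x,y))}$. Since $X_0\in\mass$, every $x\in X_0$ satisfies $\exi{X_0}(x)\in M\subseteq\sepa$, so the loop $\rho_Y(f_0(x)) : f_0(x)\rightsquigarrow f_0(x)$ together with a witness of $\exi{X_0}(x)$ makes $\vld{\Rf(x, f_0(x))}$. The entailment then forces $\vld{\mathfrak{R}_g(x, f_0(x))}$, which by Remark \ref{rem:rf-values} applied to $g$ means precisely that there is an edge $f_0(x)\rightsquigarrow g_0(x)$ in $Y$. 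Choosing one such edge for every $x$ yields a set map $h : X_0\to Y_1$ with $s_Y\circ h = f_0$ and $t_Y\circ h = g_0$, and unwinding the realizer of the entailment through Remark \ref{rem:eta} exhibits a tracker for $h$. By Remark \ref{rem:edge} this $h$ is a homotopy $f\Rightarrow g$, whence $[f]=[g]$ and $\mathcal{K}$ is faithful.

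The step I expect to be the main obstacle is precisely this last construction: passing from the purely logical entailment $\Rf\heyo\mathfrak{R}_g$ to an honest, \emph{tracked} morphism $h : X_0\to Y_1$ of $\mass$. This is where choice enters, to select an actual edge out of each (now non-empty) fibre of $Y_1$, and where Remark \ref{rem:rf-values} and Remark \ref{rem:eta} must be combined carefully so as to turn the realizers into genuine edges and then into a single tracker valid for all $x$ at once. By contrast, homotopy-invariance and functoriality are further instances of the tracker gymnastics already performed in Lemmas \ref{lem:ext}--\ref{lem:tot}, and I do not anticipate any difficulty there beyond bookkeeping.
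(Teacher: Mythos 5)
Your proposal is correct and takes essentially the same route as the paper's proof: well-definedness on objects and morphisms is delegated to Lemmas \ref{lem:impl-set}--\ref{lem:tot}, homotopy-invariance comes down to composing the witnessing edges with $h(x)$ (the paper phrases this via Remarks \ref{rem:rf-implies} and \ref{rem:rf-values} plus symmetry/transitivity, which is the same computation one abstraction level up), and faithfulness is obtained exactly as in the paper by instantiating the entailment $\Rf \heyo \mathfrak{R}_g$ at $y = f_0(x)$ to produce edges $f_0(x) \rightsquigarrow g_0(x)$ that assemble into a homotopy. Your write-up is in fact slightly more scrupulous than the paper's on two points it leaves implicit --- the verification that $\mathcal{K}$ preserves identities (via the twist $\sigma_X$) and composition, and the trackedness of the chosen homotopy $h : X_0 \rightarrow Y_1$ --- both of which you resolve correctly.
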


\begin{proof}
  $(X_0, \approx_X)$ is an implicative set by Lemma \ref{lem:impl-set} while
  $\Rf$ is a functional relation by Lemmas \ref{lem:ext},
  \ref{lem:str}, \ref{lem:sv} and \ref{lem:tot}. Let $g : X \rightarrow Y$ be
  a further morphism of pseudo-groupoids and $h : f \Rightarrow g$ be a
  homotopy. There is thus an edge $h (x) : f_0 (x) \rightsquigarrow g_0 (x)$
  for every vertex $x \in X_0$ (c.f. Remark \ref{rem:edge}), which entails
  $\vld{\siex \nu_Y (f_0 (x), g_0 (x))}$ and therefore
  \[ \vld{\ieeq{f_0 (x)}{g_0 (x)}{Y}} \]
  We further have $\vld{\bigM_{x \in X_0} \bigM_{y \in Y_0}
  \left( \Rf (x, y) \rightarrow \ieeq{y}{f_0 (x)}{Y} \right)}$
  (c.f. Remark \ref{rem:rf-implies}). Now $\Rf (x, y)$ is either
  $\bot$ (so it implies anything) or $\vld{\Rf (x, y)}$ (c.f.
  Remark \ref{rem:rf-values}). In the latter case we have
  $\vld{\ieeq{y}{f_0(x)}{Y}}$
  by (the implicative) modus ponens and further
  $\vld{\ieeq{y}{g_0(x)}{Y}}$ by symmetry and transitivity, hence
  \[ \vld{\Rf \rightarrow \mathfrak{R}_g} \]
  which entails that $\mathcal{K}$ is well-defined on morphisms (c.f.
  Proposition \ref{prop:topos-morphisms}). It is easy to see that
  $\mathcal{K}$ preserves identities and composition.

  \

  Let $u, v : X \rightarrow Y$ be morphisms of pseudo-groupoids. Suppose
  $\mathcal{K} (u) =\mathcal{K} (v)$, that is
  \[ \vld{\mathfrak{R}_u \leftrightarrow \mathfrak{R}_v} \quad (\star) \]
  Given $x \in X_0$ $(\star)$ implies $\vld{\ieeq{u_0 (x)}{v_0 (x)}{Y_0}}$, so
  there is an edge $e_x : u_0 (x) \rightsquigarrow v_0 (x)$. All these edges
  are indexed by $X$ and organise themselves in a homotopy $h : u \Rightarrow
  v$, hence $\mathcal{K}$ is faithful.
\end{proof}

\begin{notation}
  Still seeking the greater good by making things human-readable, we shall
  also add the $\letin{x}{e}$ macro to our $\lambda$-calculus.
\end{notation}

\begin{proposition}
  \label{prop:full}The functor $\mathcal{K}:
  \exlex{(\mass)} \rightarrow \ens [\mathcal{A}]$
  is full.
\end{proposition}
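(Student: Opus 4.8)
The plan is to invert the recipe of Proposition~\ref{prop:K}. Starting from a morphism $F : \mathcal{K}(X) \to \mathcal{K}(Y)$ in $\ens [\mathcal{A}]$, that is a functional relation $F$ between the implicative sets $\mathcal{K}(X) = (X_0, \approx_X)$ and $\mathcal{K}(Y) = (Y_0, \approx_Y)$, I would manufacture a quiver morphism $f : X \to Y$ and show $\Rf \approx F$. Recall that under $\exlex{(\mass)} \simeq \qgrpd{\mass}$ a morphism of pseudo-groupoids is merely a homotopy class of quiver morphisms, so $f$ need only consist of maps $f_0 : X_0 \to Y_0$ and $f_1 : X_1 \to Y_1$ commuting with source and target; no compatibility with $\rho$, $\sigma$, $\tau$ is required. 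Moreover, by Proposition~\ref{prop:topos-morphisms} a single realised implication between functional relations already witnesses their equivalence, so at the very end it will suffice to exhibit a tracker of $\bigM_{x \in X_0, y \in Y_0}(F(x,y) \to \Rf(x,y))$.

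First I would read $f_0$ off the totality of $F$. A realiser of $\tot{F} \in \sepa$ sends any realiser of $\eexi{X}(x)$ to a realiser of $\iex{y \in Y_0}(\eexi{Y}(y) \sqcap F(x,y))$; using the witness-extraction mechanism of Remark~\ref{rem:eta} together with the compactness of $M$ (Lemma~\ref{lem:compact}) and choice, I would select, for each $x$ with $\eexi{X}(x) \in \sepa$, a vertex $f_0(x) \in Y_0$ carrying realisers of both $\eexi{Y}(f_0(x))$ and $F(x, f_0(x))$, defining $f_0$ arbitrarily on ghosts. The $\eexi{Y}$-component furnishes a uniform tracker, so $f_0$ is a morphism $X_0 \to Y_0$ in $\mass$, while the $F(x, f_0(x))$-component records a tracker $\phi$ of $\bigM_{x}(\eexi{X}(x) \to F(x, f_0(x)))$ that will be reused below.

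Next I would construct $f_1$ on edges. Given an edge $e : x \rightsquigarrow x'$ in $X$, so that $\siex \nu_X(x,x')$ is realised through $\exi{X_1}(e)$, extensionality of $F$ transports $F(x, f_0(x))$ along $x \approx_X x'$ to $F(x', f_0(x))$, and single-valuedness of $F$ against $F(x', f_0(x'))$ then yields a realiser of $\ieeq{f_0(x)}{f_0(x')}{Y}$, hence of $\siex \nu_Y(f_0(x), f_0(x'))$; extracting a witnessing edge (again Remark~\ref{rem:eta} and choice) I set $f_1(e)$ to be an edge $f_0(x) \rightsquigarrow f_0(x')$ of $Y$. By the source and target of the extracted edge we get $s_Y \circ f_1 = f_0 \circ s_X$ and $t_Y \circ f_1 = f_0 \circ t_X$, so $f = (f_0, f_1)$ is a genuine quiver morphism, and assembling the realisers of $\ext{F}$, $\sv{F}$ and the trackers of $X$'s structural maps into one $\lambda$-term supplies a uniform tracker of $f_1$, whence $f \in \qgrpd{\mass}$.

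Finally I would verify $\mathcal{K}(f) = F$ by realising $F(x,y) \to \Rf(x,y)$: strictness of $F$ extracts $\exi{X_0}(x)$ and $\exi{Y_0}(y)$ from $F(x,y)$; feeding $\exi{X_0}(x)$ to the tracker $\phi$ produces $F(x, f_0(x))$, whereupon single-valuedness of $F$ on the pair $F(x,y), F(x, f_0(x))$ delivers $\ieeq{y}{f_0(x)}{Y}$ and so its component $\siex \nu_Y(y, f_0(x))$. Conjoining these three realisers reconstitutes $\Rf(x,y) = \exi{X_0}(x) \sqcap \exi{Y_0}(y) \sqcap \siex \nu_Y(y, f_0(x))$, and Proposition~\ref{prop:topos-morphisms} concludes $\Rf \approx F$. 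The main obstacle throughout is not the logic but the passage from the opaque realisers of $\tot{F}$, $\ext{F}$ and $\sv{F}$ to honest, uniformly tracked set-maps $f_0$ and $f_1$: this is exactly where choice, the compactness of $M$, and the extraction of Remark~\ref{rem:eta} must be orchestrated together, the subsidiary nuisance being the treatment of ghosts, about which totality is silent and for which $f_0$ must simply be defined by hand.
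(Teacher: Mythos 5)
Your construction follows the paper's proof in all essentials: extract $f_0$ from a realiser of $\tot{F}$ by the witness-extraction of Remark~\ref{rem:eta} plus choice; produce an edge $f_1(e) : f_0(x) \rightsquigarrow f_0(x')$ from $\ext{F}$ and $\sv{F}$ plus choice; check both components are tracked. Two of your local choices are, if anything, cleaner than the printed argument. By taking $f_0(x)$ to \emph{be} the extracted witness you get its tracker and the realiser of $\bigM_{x}(\eexi{X}(x) \rightarrow F(x,f_0(x)))$ from one and the same $\lambda$-term, whereas the paper first chooses $f_0$ from $\tilde F$ and must reconcile it with the extracted witness $y_x$ through $\sv{F}$; likewise, coordinating the choice of $f_1(e)$ with the realiser chain yields its tracker directly, where the paper instead appeals to the map $\phi$ and the claim $\lamm{\phi} \in \sepa$. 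And you finish by realising $\bigM_{x \in X_0, y \in Y_0}(F(x,y) \rightarrow \Rf(x,y))$ and invoking Proposition~\ref{prop:topos-morphisms}, i.e.\ you actually verify $\mathcal{K}(f) = F$ --- a step the paper's proof leaves implicit.

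Two problems, one of them a genuine gap. First, the harmless one: compactness of $M$ (Lemma~\ref{lem:compact}) plays no role here; the extraction device is Remark~\ref{rem:eta}, a fact about implicative existence valid for any $M$, and Lemma~\ref{lem:compact} is about a lifting property, not about producing witnesses. Second, your treatment of ghosts is backwards. Implicative sets in the image of $\mathcal{K}$ \emph{have no ghosts}: for every vertex $x \in X_0$ we have $\exi{X_0}(x) \in M \subseteq \sepa$, and $\rho_X(x)$ is an edge $x \rightsquigarrow x$, so $\siex \nu_X(x,x) \in \sepa$ and hence $\eexi{X}(x) \in \sepa$; the paper records exactly this before defining $\tilde F$. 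This observation is not cosmetic --- it is what makes your assertion of a uniform tracker for $f_0$ true. Had ghosts been present, ``defining $f_0$ by hand'' on them would not be a harmless default: the tracker must still realise $\exi{X_0}(x) \rightarrow \exi{Y_0}(f_0(x))$ at every ghost $x$ (and $\exi{X_0}(x)$ still lies in $\sepa$ there, being a value of an $M$-assembly), which an arbitrary value $f_0(x)$ gives no way to do. Your closing sentence shows you regarded ghosts as a live case to be patched manually; the proof stands only because the one-line no-ghost lemma, which you should state and prove, makes that case vacuous.
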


\begin{proof}
  Let $X, Y \in \exlex{(\mass)}$ and $F :
  \mathcal{K} (X) \rightarrow \mathcal{K} (Y)$ be a functional relation.
  Unravelling the definition yields in this case
  \begin{eqnarray*}
    \vld{\sv{F}} & = & \bigM_{x \in X_0} \bigM_{y, y' \in
    Y_0} \left( F (x, y) \sqcap F (x, y') \rightarrow \exi{Y_0} (y) \sqcap
    \exi{Y_0} (y') \sqcap \siex \nu_Y (y, y')  \right)\\
    \vld{\tot{F}} & = & \bigM_{x \in X_0} \left( \left( \exi{X_0} (x)
    \sqcap \siex \nu_X (x, x) \right) \rightarrow \iex{y \in Y_0} \left(
    \exi{Y_0} (y) \sqcap \siex \nu_Y (y, y) \sqcap F (x, y) \right) \right)
  \end{eqnarray*}
  As implicative sets in the image of $\mathcal{K}$ do not have ghosts, $F$
  can be presented as a map
  \begin{eqnarray*}
    \tilde{F} : X_0 & \longrightarrow & \pows{Y_0}\\
    x & \mapsto & \left\{ y \in Y_0 |F (x, y) \in \sepa \right\}
  \end{eqnarray*}
  Assuming choice we can extract a map $f_0 : X_0 \rightarrow Y_0$ from
  $\tilde{F}$. Suppose $\kappa$ tracks $\rho_X$. For any $x \in X_0$ there is
  $y_x$ (in general not unique) such that
  \begin{eqnarray*}
    \letin{t}{\lam{p}{\tot{F}  (p \sqcap (\kappa p))} } &  & \\
    \intpr{\lam{q}{\lpair{\lpizero}{\lpitwo} \left( (t q)  \ide \right)}}  &
    \preccurlyeq & \bigM_{x \in X_0} \left( \exi{X_0} (x) \rightarrow
    \left(  \exi{Y_0} (y_x) \sqcap F (x, y_x) \right) \right)
  \end{eqnarray*}
  thus
  \[ \vld{} \bigM_{x \in X_0} \left( \exi{X_0} (x) \rightarrow \left(
     \exi{Y_0} (y_x) \sqcap F (x, y_x) \right) \right) \]
  But $\vld{F (x, f_0 (x))}$ for all $x \in X$ by definition of $f_0$, hence
  \begin{eqnarray*}
    \letin{t'}{\lam{r}{\sv{F}  \left( \left( \lpione r \right) \sqcap F (x, f_0
    (x))  \right)}} &  & \\
    \ilam{s}{\lpione  (t' s)} & \preccurlyeq & \bigM_{x \in X_0}
    \left(  \exi{Y_0} (y_x) \sqcap F (x, y_x) \right) \rightarrow \exi{Y_0}
    (f_0 (x))
  \end{eqnarray*}
  thus
  \[ \vld{} \bigM_{x \in X_0} \left(  \exi{Y_0} (y_x) \sqcap F (x,
     y_x) \right) \rightarrow \exi{Y_0} (f_0 (x)) \]

  But then
  \[ \vld{} \bigM_{x \in X_0} \left( \exi{X_0} (x) \rightarrow
     \exi{Y_0} (f_0 (x)) \right) \]
  so $f_0$ is tracked.

  \

  Let $e : x \rightsquigarrow x'$ be an edge of $X$, so we have
  $\vld{\ieeq{x}{x'}{X}}$. We have $\vld{F (x, f_0 (x))}$ and $\vld{F (x', f_0
  (x'))}$ by definition of $f_0$, hence $\vld{F (x, f_0 (x'))}$ by
  extensionality and therefore $\vld{} \ieeq{f_0 (x)}{f_0 (x')}{Y}$ by
  single-valuedness. But then $Y (f_0 (x), f_0 (x')) \neq \varnothing$, so
  given $e : x \rightsquigarrow x'$ we have
  \begin{eqnarray*}
    \left( (f_0 \times f_0) \circ \pr{s_X}{t_X} \right) (e) & = & \left(
    \pr{s_Y}{t_Y} \circ f_1 \right) (e)
  \end{eqnarray*}
  for any map such that
  \begin{eqnarray*}
    f_1 (e) & \in & Y (f_0 (x), f_0 (x'))
  \end{eqnarray*}
  Assuming choice such a map always exists, what remains to be shown is that
  it is tracked. Consider the map
  \begin{eqnarray*}
    \phi : \mathcal{A} & \longrightarrow & \mathcal{A}\\
    a & \mapsto & \left\{\begin{array}{lll}
      \exi{Y_1} (f_1 (e)) &  & \text{if there are } x, x' \in X_0  \text{and}
      e \in X (x, x')  \text{such that} a = \exi{X_1} (e)\\
      \top &  & \text{otherwise}
    \end{array}\right.
  \end{eqnarray*}
  We have
  \begin{eqnarray*}
    \lamm{\phi} & = & \bigM_{a \in \mathcal{A}} (a \rightarrow \phi
    (a))
  \end{eqnarray*}
  Notice that $\lamm{\phi} \in \sepa$ as either $a \in \sepa$ and $\phi (a)
  \in \sepa$ or $\phi (a) = \top$. Furthermore, we have
  \begin{eqnarray*}
    \ilam{p}{\left( \lamm{\phi} \right) p} & \preccurlyeq & \bigM_{e
    \in X_1} \left( \exi{X_1} (e) \rightarrow \exi{Y_1} (f_1 (e)) \right)
  \end{eqnarray*}
  hence
  \[ \vld{} \bigM_{e \in X_1} \left( \exi{X_1} (e) \rightarrow
     \exi{Y_1} (f_1 (e)) \right) \]

\end{proof}

\subsubsection{Is $\mathcal{K}$ always essentially surjective?}
\mbox{}\\ \\
Let $\eset{X}$ be an implicative set and $\nu : X \times X \rightarrow
\ppows{M}$ be a valuation such that $\approx^+ \text{ } \cong_{X \times X}
\text{ } \siex \nu$ (such a valuation always exists since $M$ is dense).
Assuming choice there is a map $c : \ngh{X} \rightarrow M$ such that $c (x)
\in \nu (x, x)$ for all $x \in \ngh{X}$.

\begin{notation}
  \leavevmode

  \begin{enumeratenumeric}
    \item $\Equ{X} \assign \left\{ (x, x') \in X \times X| \text{ }
    \eeq{x}{x'} \in \sepa \right\}$ (the set of valid equalitites on $X$);

    \item $\ngh{X} \assign \left\{ x \in X| \eexi{X} (x) \in \sepa \right\}$
    (the set of $X$'s non-ghosts);

    \item ${\approx}^{+} \assign \restr{\approx}{\Equ{X}}$;

    \item $\textsf{} \exiplus{X} \assign \restr{\left( \eexi{X}
    \right)}{\ngh{X}}$.
  \end{enumeratenumeric}
\end{notation}

\begin{lemma}
  \label{lem:pseudo}The $M$-assemblies
  \begin{itemizeminus}
    \item $\hat{X}_1$ given by $\car{\widehat{X_1}} \assign \sum_{(x, x') \in
    \Equ{X}} \nu (x, x')$ and $\exi{\hat{X}_1} ((x, x'), m) \assign c (x)
    \sqcap m \sqcap c (x')$;

    \item $\hat{X}_0$ given by $\car{\hat{X}_0} \assign \ngh{X}$ and
    $\exi{\widehat{X_0}} (x) = c (x)$
  \end{itemizeminus}
  along with the maps $s \assign \pi_0 \circ \pi_0$ and $t \assign \pi_1 \circ
  \pi_0$ form a pseudo-groupoid $\hat{X}$.
\end{lemma}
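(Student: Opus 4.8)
The plan is to supply the three structural morphisms $\rho_{\hat X}$, $\sigma_{\hat X}$, $\tau_{\hat X}$ on top of the two face maps $s$ and $t$, to check that $\hat X_0$ and $\hat X_1$ genuinely lie in $\mass$, and to verify that every map in sight is tracked. All of the source/target identities demanded of a pseudo-groupoid will hold verbatim at the level of underlying sets, so the entire content is the production of trackers.

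First I would confirm the two carriers are objects of $\mass$. The predicate $\exi{\hat X_0}(x)=c(x)$ is in $M$ by the choice of $c$, and $\exi{\hat X_1}((x,x'),m)=c(x)\sqcap m\sqcap c(x')$ is in $M$ because $M$ is algebraic, hence closed under $\sqcap$. The face maps $s=\pi_0\circ\pi_0$ and $t=\pi_1\circ\pi_0$ are tracked by the evident terms extracting the first, respectively the third, component of an encoded triple. For reflexivity I would set $\rho_{\hat X}(x)\assign((x,x),c(x))$, which is legitimate since $x\in\ngh{X}$ forces $(x,x)\in\Equ{X}$ and $c(x)\in\nu(x,x)$; it is tracked by the diagonal $\ilam{u}{\lam{z}{z\,u\,u\,u}}$ sending $c(x)$ to $c(x)\sqcap c(x)\sqcap c(x)$, and manifestly $s\circ\rho_{\hat X}=t\circ\rho_{\hat X}=\tmop{id}$.

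The heart of the argument is symmetry and composition, where an edge must be turned into a reversed, respectively composed, edge while remaining tracked; the bridge is the isomorphism $\approx^{+}\cong\siex\nu$ together with the trackers witnessing $\vld{\sym{X}}$ and $\vld{\trans{X}}$ (which hold because $\eset{X}$ is an implicative set) and the $\eta$-expansion of Remark \ref{rem:eta}. For $\sigma_{\hat X}$, symmetry of $\approx$ carries $(x,x')\in\Equ{X}$ to $(x',x)\in\Equ{X}$, so $\nu(x',x)$ is a legitimate non-empty edge-set; composing the tracker of $\sym{X}$ with the iso yields some $\varsigma\preccurlyeq\bigM_{(x,x')\in\Equ{X}}\bigl(\siex\nu(x,x')\rightarrow\siex\nu(x',x)\bigr)$. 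Feeding $\varsigma$ the element $\ilam{z}{z\,m}\preccurlyeq\siex\nu(x,x')$ and invoking Remark \ref{rem:eta} produces some $m'\in\nu(x',x)$ with $\varsigma(\ilam{z}{z\,m})\preccurlyeq\ilam{z}{z\,m'}$; assuming choice I fix $\sigma_{\hat X}((x,x'),m)\assign((x',x),m')$. The reshuffling term that permutes the outer components $c(x),c(x')$ and computes the middle one as $\varsigma(\ilam{z}{z\,m})\,\ide\preccurlyeq m'$ then tracks $\sigma_{\hat X}$, and by construction $s\circ\sigma_{\hat X}=t$ and $t\circ\sigma_{\hat X}=s$. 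Composition $\tau_{\hat X}$ on a composable pair in $X_1\circledast X_1$ is entirely parallel: transitivity of $\approx$ gives $(x,x'')\in\Equ{X}$, the tracker of $\trans{X}$ together with the same $\eta$-trick selects $p\in\nu(x,x'')$ and tracks $\tau_{\hat X}$, while the identities $s\circ\tau_{\hat X}=s\circ p_0$ and $t\circ\tau_{\hat X}=t\circ p_1$ hold on the nose.

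The step I expect to be the main obstacle is precisely this passage from the \emph{existential} trackers of $\sym{X}$ and $\trans{X}$, which only witness the predicate $\siex\nu$, to \emph{representative-producing} maps valued in $M$: it is Remark \ref{rem:eta} that extracts an honest element of $\nu(x',x)$ (respectively $\nu(x,x'')$) lying below the witness, and choice that turns the extraction into a function. Everything else is bookkeeping, and crucially no associativity or involutivity needs checking, since a pseudo-groupoid asks only for the listed source/target equations, all of which are visibly satisfied.
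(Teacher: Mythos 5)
Your proposal is correct and matches the paper's own proof in all essentials: both verify the carriers lie in $\mass$ via algebraicity, track $s$ and $t$ by projections, take the diagonal $\rho(x) = ((x,x),c(x))$, and obtain $\sigma$ and $\tau$ by transporting $\sym{X}$ and $\trans{X}$ through the density isomorphism $\approx^{+} \cong \siex\nu$ and then extracting concrete labels in $\nu(x',x)$, resp.\ $\nu(x,x'')$, via Remark \ref{rem:eta}. If anything, your explicit appeal to Remark \ref{rem:eta} plus choice to get an honest member $m'$ of $\nu(x',x)$ is slightly more careful than the paper, which writes the new label directly as $\left(\sym{X}'\,\ilam{z}{z m}\right)\ide$ and asserts membership.
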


\begin{proof}
  $\widehat{X_1}$ is an $M$-assembly since $M$ is algebraic. The source and
  target maps $s$ and $t$ are tracked by $\lpizero$ and $\lpitwo$,
  respectively. We have the local section of $\pr{s}{t}$
  \begin{eqnarray*}
    \rho : & \car{\hat{X}_0} \longrightarrow & \car{\widehat{X_1}}\\
    x & \mapsto & ((x, x), c (x))
  \end{eqnarray*}
  tracked by $\tmop{id}$. Next we have the iso
  \[ \approx^+ \text{ } \cong_{X \times X} \text{ } \siex \nu \qquad (\star)
  \]
  by density, which entails
  \[ \vld{\sym{X}' \assign \bigM_{x, x' \in \ngh{X}} \left( \siex \nu
     (x, x') \rightarrow \siex \nu (x', x) \right)} \]
  Let $x, x' \in \ngh{X}$ and $m \in \nu (x, x')$. We then have $\left(
  \sym{X}' \ilam{z}{z m} \right)  \ide \in \nu (x', x)$, hence the ``twist''
  map
  \begin{eqnarray*}
    \sigma : \car{} & \car{\widehat{X_1}} \longrightarrow &
    \car{\widehat{X_1}}\\
    ((x, x'), m) & \mapsto & \left( (x', x), \left( \sym{X}' \ilam{z}{z m}
    \right)  \ide \right)
  \end{eqnarray*}
  tracked by $\ilam{u \nocomma \nocomma m \nocomma v}{v \sqcap \left( \left(
  \sym{X}' \lam{z}{z m} \right)  \ide \right) \sqcap u}$. But $(\star)$ also
  entails
  \[ \vld{\trans{X}' \assign  \bigM_{x, x', x''} \left( \siex} \nu (x, x')
     \sqcap \siex \nu (x', x'') \rightarrow \siex \nu (x, x'') \right) \]

  Let $x, x', x'' \in \ngh{X}$, $m \in \nu (x, x')$ and $n \in \nu (x', x'')$.
  We then have
  \begin{eqnarray*}
    \left( \trans{X}'  \ilam{z}{z \nocomma m}  \ilam{z}{z n} \right)  \ide &
    \in & \nu (x, x'')
  \end{eqnarray*}
  hence the ``connecting'' map
  \begin{eqnarray*}
    \tau : \car{\widehat{X_1}} \circledast \car{\widehat{X_1}} &
    \longrightarrow & \car{\widehat{X_1}}\\
    (((x, x'), m), ((x', x''), n)) & \mapsto & \left( (x, x''), \left(
    \trans{X}'  \ilam{z}{z \nocomma m}  \ilam{z}{z n} \right)  \ide \right)
  \end{eqnarray*}
  tracked by $\ilam{u \dum  \dum m n v}{u \sqcap \left( \trans{X}'  \ilam{z}{z
  \nocomma m}  \ilam{z}{z n} \right)  \ide \sqcap v}$.
\end{proof}

\label{rem:pseudo}We thus have $\mathcal{K} (\hat{X}) = \left( \ngh{X},
\triplesim \right)$ where
\begin{eqnarray*}
  \eeqq{x}{x'} & = & \left\{\begin{array}{lll}
    c (x) \sqcap c (x') \sqcap \siex \nu (x, x') &  & (x, x') \in \Equ{X}\\
    \bot &  & \text{otherwise}
  \end{array}\right.
\end{eqnarray*}
as $\nu = \nu_{B_X}$ by construction (c.f. Lemma \ref{lem:pseudo}). There are
no ghosts here.

\begin{lemma}
  \label{lem:pseudo-fun}\label{lem:fun}The relation $K : \left( \ngh{X},
  \triplesim \right) \rightarrow (X, \approx)$ given by
  \begin{eqnarray*}
    K : \ngh{X} \times X & \longrightarrow & \mathcal{A}\\
    (x, x') & \mapsto & x \equ x'
  \end{eqnarray*}
  is functional.
\end{lemma}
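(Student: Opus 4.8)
The plan is to verify the four defining clauses of a functional relation for $K$ one at a time, each by exhibiting an explicit tracker, exactly in the spirit of Lemmas \ref{lem:ext}, \ref{lem:str}, \ref{lem:sv} and \ref{lem:tot}. Recall that the domain $\left( \ngh{X}, \triplesim \right)$ carries the equality $\eeqq{x}{x'} = c (x) \sqcap c (x') \sqcap \siex \nu (x, x')$ (on $\Equ{X}$, and $\bot$ off it), the codomain $(X, \approx)$ carries $\eeq{x}{x'}$, and $K (x, x') = \eeq{x}{x'}$. Three tools do all the work: the symmetry $\sym{X}$ and transitivity $\trans{X}$ that make $(X, \approx)$ an implicative set; the density isomorphism $\approx^{+} \cong_{X \times X} \siex \nu$, which furnishes one tracker turning a proof of $\eeq{x}{x'}$ into a proof of $\siex \nu (x, x')$ and another going back; and the observation that applying a proof $p \preccurlyeq \siex \nu (x, x)$ to $\ide$ gives $p \, \ide \preccurlyeq c (x)$, since $c (x) \in \nu (x, x)$ and $\ide \preccurlyeq c (x) \rightarrow c (x)$ (Remark \ref{rem:eta}, item 2).

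Single-valuedness and extensionality are the transitivity of $\approx$ in disguise. For $\sv{K}$ I would feed the two hypotheses $K (x, x') = \eeq{x}{x'}$ and $K (x, x'') = \eeq{x}{x''}$ through $\sym{X}$ and $\trans{X}$ to produce $\eeq{x'}{x''}$. For $\ext{K}$ the one extra step is to split off the third factor $\siex \nu$ of the $\triplesim$-hypothesis $\eeqq{x}{x''}$ and pass it through the backward density tracker to recover the $\approx$-equality $\eeq{x}{x''}$; chaining it with the surviving hypotheses by $\sym{X}$ and $\trans{X}$ then yields the required instance of $\approx$. Both trackers have the same shape as the $\lambda$-terms appearing in Lemmas \ref{lem:sv} and \ref{lem:ext}.

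Strictness is the clause where the shape of $\eeqq{}{}$ bites, and I expect it to be the main obstacle. From a proof of $K (x, x') = \eeq{x}{x'}$ I must build a proof of $\eeqq{x}{x} \sqcap \eeq{x'}{x'}$, and the factor $\eeqq{x}{x} = c (x) \sqcap c (x) \sqcap \siex \nu (x, x)$ mentions the choice value $c (x)$, which a closed tracker cannot name directly. The way around it is: derive $\eeq{x}{x}$ and $\eeq{x'}{x'}$ from $\eeq{x}{x'}$ by $\sym{X}$ and $\trans{X}$, push $\eeq{x}{x}$ through the forward density tracker to a proof $p \preccurlyeq \siex \nu (x, x)$, and then \emph{reconstruct} the two $c (x)$ factors as $p \, \ide$ by the application trick recorded above. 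Pairing $(p \, \ide) \sqcap (p \, \ide) \sqcap p$ with the proof of $\eeq{x'}{x'}$ gives the tracker for $\str{K}$.

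Totality then falls out: given a proof of $\eeqq{x}{x}$, its third factor is a proof of $\siex \nu (x, x)$, which the backward density tracker converts into a proof $q \preccurlyeq \eeq{x}{x}$; taking $x$ itself as the existential witness in $X$, the same $q$ realises both $\eexi{X} (x) = \eeq{x}{x}$ and $K (x, x) = \eeq{x}{x}$, so a short term of the form $\ilam{z}{z \left( q \sqcap q \right)}$ tracks $\tot{K}$. Collecting the four trackers shows that $K$ is functional.
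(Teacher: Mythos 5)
Your proposal is correct and its architecture is exactly the paper's: verify the four clauses by explicit trackers, with single-valuedness and extensionality reduced to $\sym{X}$ and $\trans{X}$, and strictness and totality hinging on the density isomorphism together with a device that manufactures the choice values $c(x)$. The one genuine divergence is that device, and the comparison is instructive. The paper introduces the auxiliary map $d : \mathcal{A} \rightarrow \mathcal{A}$ with $d(a) = c(x)$ whenever $a = \exiplus{X}(x)$ and $d(a) = \top$ otherwise, and then simply asserts $\lamm{d} \in \sepa$ before applying $\lamm{d}$ to a proof of $\eeq{x}{x}$; you instead push $\eeq{x}{x}$ through the forward density tracker $t$ to get $p \preccurlyeq \siex \nu (x, x)$ and extract $c(x)$ as $p \, \ide$, which is sound because $c(x) \in \nu(x,x)$ gives $\siex \nu (x, x) \preccurlyeq (c(x) \rightarrow c(x)) \rightarrow c(x)$. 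These are the same mechanism in two guises, but yours is the more self-contained one: the paper's unproved claim $\lamm{d} \in \sepa$ is established precisely by your computation (the term $\lam{p}{(t \, p) \, \ide}$ is a realizer for it), and your version also sidesteps the question of whether $d$ is well defined when $\exiplus{X}(x_1) = \exiplus{X}(x_2)$ but $c(x_1) \neq c(x_2)$. Two points to tighten. First, in extensionality the meet also ranges over pairs $(x_1, x_2) \nin \Equ{X}$, where the $\triplesim$-hypothesis is $\bot$; the paper dispatches these factors explicitly, and you should too — note that as literally described (applying the backward density tracker \emph{to} the $\siex\nu$-component) your tracker is not guaranteed to be below those factors, whereas if the component is instead used in continuation position (the existential proof applied to a continuation, as in the paper's trackers of the form $w \, \ide$ and $\lam{z}{z(\dots)}$), the off-$\Equ{X}$ factors come for free, since a proof below $\bot$ applied to anything is below $\bot$, hence below any goal. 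Second, on totality your tracker $\ilam{z}{z (q \sqcap q)}$ is actually more complete than the paper's displayed one, which omits the pairing and the existential introduction.
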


\begin{proof}
  We have
  \begin{eqnarray*}
    \ext{K} & = & \bigM_{x_1, x_2 \in \ngh{X}} \bigM_{x_1',
    x_2' \in X} \left( \eeq{x_1}{x_1'} \sqcap \eeqq{x_1}{x_2} \sqcap
    \eeq{x_1'}{x_2'} \rightarrow \eeq{x_2}{x_2'} \right)
  \end{eqnarray*}
  Suppose $(x_1, x_2) \nin \Equ{X}$. We then have $\eeqq{x_1}{x_2} = \bot$ \
  so
  \begin{eqnarray*}
    \eeq{x_1}{x_1'} \sqcap \eeqq{x_1}{x_2} \sqcap \eeq{x_1'}{x_2'}
    \rightarrow \eeq{x_2}{x_2'} & = & \top
  \end{eqnarray*}
  in those cases, hence
  \begin{eqnarray*}
    \ext{K} & = & \bigM_{(x_1, x_2) \in \tmop{Equ}^+_X}
    \bigM_{x_1', x_2' \in X} \left( \eeq{x_1}{x_1'} \sqcap
    \eeqq{x_1}{x_2} \sqcap \eeq{x_1'}{x_2'} \rightarrow \eeq{x_2}{x_2'}
    \right)
  \end{eqnarray*}

  But $\ext{K}$ is equivalent to
  \begin{eqnarray*}
    \ext{K}' & \assign & \bigM_{(x_1, x_2) \in \tmop{Equ}^+_X}
    \bigM_{x_1', x_2' \in X} \left( \eeq{x_1}{x_1'} \sqcap c (x_1)
    \sqcap c (x_2) \sqcap \eeq{x_1}{x_2} \sqcap \eeq{x_1'}{x_2'}
    \rightarrow \eeq{x_2}{x_2'} \right)
  \end{eqnarray*}
  by density and furthermore
  \begin{eqnarray*}
    \ilam{u \dum  \dum v w}{\trans{X}  \left( \sym{X} v \right)  \left(
    \trans{X} u v \right)} & \preccurlyeq & \ext{K}'
  \end{eqnarray*}

  Next we have
  \begin{eqnarray*}
    \str{K} & = & \bigM_{x \in \Equ{X}} \bigM_{x' \in X}
    \left( \eeq{x}{x'} \rightarrow | x \triplesim x | \sqcap \eeq{x'}{x'}
    \right)
  \end{eqnarray*}
  But $\str{K}$ is equivalent to
  \begin{eqnarray*}
    \str{K}' & = & \bigM_{x \in \Equ{X}} \bigM_{x' \in X}
    \left( \eeq{x}{x'} \rightarrow c (x) \sqcap \exiplus{X} (x) \sqcap
    \eexi{X} (x') \right)
  \end{eqnarray*}
  by density. Consider the map
  \begin{eqnarray*}
    d : \mathcal{A} & \longrightarrow & \mathcal{A}\\
    a & \mapsto & \left\{\begin{array}{lll}
      c (x) &  & \text{there is } x \in \ngh{X}  \text{such that } a =
      \exiplus{X} (x)\\
      \top &  & \text{otherwise}
    \end{array}\right.
  \end{eqnarray*}
  We have $\lamm{d} \in \sepa$ and furthermore
  \begin{eqnarray*}
    \letin{\varepsilon}{\lam{p}{} \trans{X} p \left( \sym{X} p \right)}
    \text{{\hspace{8em}}} &  & \\
    \letin{\varepsilon'}{\lam{q}{\left( \lamm{d} \right) (\varepsilon q)}}
    \text{{\hspace{7em}}} &  & \\
    \ilam{r}{(\varepsilon' r) \sqcap (\varepsilon r) \sqcap \left( \trans{X}
    \left( \sym{X} p \right) p \right)} & \preccurlyeq & \str{K}'
  \end{eqnarray*}
  Next we obviously have
  \begin{eqnarray*}
    \vld{\sv{K}} & = & \bigM_{x_1 \in \ngh{X}} \bigM_{x_1',
    x_2' \in X} \left( \eeq{x}{x_1'} \sqcap \eeq{x}{x_2'} \rightarrow
    \eeq{x_1'}{x_2'} \right)
  \end{eqnarray*}

  Finally we have
  \begin{eqnarray*}
    \tot{K} & = & \bigM_{x \in \ngh{X}} \left( | x \triplesim x |
    \rightarrow \iex{x' \in X} \left( \eexi{X} (x') \sqcap K (x, x') \right)
    \right)\\
    & = & \bigM_{x \in \ngh{X}} \left( \left( c (x) \sqcap \siex \nu
    (x, x) \right) \rightarrow \iex{x' \in X} \left( \eexi{X} (x') \sqcap
    \eeq{x}{x'} \right) \right)
  \end{eqnarray*}
  and furthermore $\vld{\xi \assign \bigM_{x \in X}} \left( \siex \nu
  (x, x) \rightarrow \exiplus{X} (x) \right)$ by density, hence
  \begin{eqnarray*}
    \ilam{\dum p}{\xi p} & \preccurlyeq & \tot{K}
  \end{eqnarray*}

\end{proof}

\begin{proposition}
  \label{prop:inj}The morphism $[K] : \left( \ngh{X}, \triplesim \right)
  \rightarrow (X, \approx)$ in $\ens [\mathcal{A}]$ is is internally
  injective.
\end{proposition}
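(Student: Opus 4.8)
The plan is to verify internal injectivity through its realizability characterisation. Recall that a functional relation, viewed as a morphism of $\ens[\mathcal{A}]$, is a monomorphism (equivalently, internally injective) precisely when it reflects equality; for $[K]$ this amounts to
\[ \vld{\bigM_{x_1, x_2 \in \ngh{X}} \bigM_{y \in X} \left( \eeq{x_1}{y} \sqcap \eeq{x_2}{y} \rightarrow \eeqq{x_1}{x_2} \right)} \]
since $K(x_i, y) = \eeq{x_i}{y}$ by Lemma \ref{lem:fun}. I would therefore set out to produce a single realiser below the displayed infimum, reusing the combinators already assembled for the functional-relation lemmas rather than building fresh ones.

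First I would recover the three conjuncts making up $\eeqq{x_1}{x_2}$ on $\Equ{X}$. From premises $u \preccurlyeq \eeq{x_1}{y}$ and $v \preccurlyeq \eeq{x_2}{y}$, feeding $v$ through $\sym{X}$ and then pairing with $u$ under $\trans{X}$ yields a realiser of $\eeq{x_1}{x_2}$; the density isomorphism $\approx^+ \cong_{X \times X} \siex \nu$ then converts this into a realiser of $\siex \nu(x_1, x_2)$. Running $u$ (resp.\ $v$) against its own $\sym{X}$-image through $\trans{X}$ produces realisers of the self-equalities $\eexi{X}(x_1) = \exiplus{X}(x_1)$ and $\eexi{X}(x_2)$; applying the map $\lamm{d}$ of Lemma \ref{lem:fun} — whose defining clause sends $\exiplus{X}(x)$ to $c(x)$ and is $\top$ elsewhere, hence lies in $\sepa$ — then returns realisers of $c(x_1)$ and $c(x_2)$. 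Assembling the three outputs with $\lpair{-}{-}$ gives a realiser of $c(x_1) \sqcap c(x_2) \sqcap \siex \nu(x_1, x_2)$, which is exactly $\eeqq{x_1}{x_2}$ on the $\Equ{X}$-branch.

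The main obstacle is the complementary branch, the case $(x_1, x_2) \nin \Equ{X}$ where $\eeqq{x_1}{x_2} = \bot$: the candidate realiser must cover these indices too, although its body was tailored to the $\Equ{X}$-branch. The governing observation is that the two premises cannot both be realised off $\Equ{X}$, for if $\eeq{x_1}{y}$ and $\eeq{x_2}{y}$ both lay in $\sepa$ the transitivity step above would place $(x_1, x_2)$ in $\Equ{X}$. I would make this precise in the idiom of the preceding lemmas, by routing the transitivity output $\eeq{x_1}{x_2}$ through a patching map returning $\eeqq{x_1}{x_2}$ on the locus $\{\,a = \eeq{x_1}{x_2} : (x_1, x_2) \in \Equ{X}\,\}$ and $\top$ otherwise; the now-familiar dichotomy — either the argument is a separator element and so is the value, or the value is $\top$ — keeps this map in $\sepa$ and absorbs the $\bot$-branch. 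Checking that this patching genuinely forces the off-$\Equ{X}$ implications into $\sepa$, rather than merely dominating them, is the one delicate point, and I expect it to turn on the same interplay between $\sepa$ and refutability that underlies Remarks \ref{rem:rf-values} and \ref{rem:rf-implies}.
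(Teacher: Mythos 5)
Your first paragraph is, combinator for combinator, the paper's own proof. The paper's realiser is assembled from exactly the pieces you describe: $\varepsilon \assign \trans{X}\,p\,(\sym{X}\,q)$ to get below $\eeq{x_1}{x_2}$, the self-equality terms $\trans{X}(\sym{X}\varepsilon)\varepsilon$ and $\trans{X}\varepsilon(\sym{X}\varepsilon)$ fed through $\lamm{d}$ to land below $c(x_1)$ and $c(x_2)$, the density witness $\chi \preccurlyeq \bigM_{x_1, x_2 \in \ngh{X}} \left( \eeq{x_1}{x_2} \rightarrow \siex \nu (x_1, x_2) \right)$ applied to $\varepsilon$, and the pairing of the three outputs (whether the self-equalities are extracted from $\varepsilon$ or from $u$ and $v$ directly is immaterial). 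The genuine divergence is your second paragraph: the paper has no counterpart to it. It rewrites $\mathsf{Inj}$ with target $c(x_1) \sqcap c(x_2) \sqcap \siex \nu(x_1, x_2)$ for \emph{all} $x_1, x_2 \in \ngh{X}$, i.e.\ it proceeds as if the $\bot$-clause in the definition of $\eeqq{x_1}{x_2}$ were absent, and stops where your first paragraph stops. So you have correctly isolated the point the paper glosses over --- but your proposed treatment of it does not work.

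The patching map cannot do what you want. If $P$ returns $\top$ off the locus, then $\lamm{P} \in \sepa$ by the usual dichotomy, but that is precisely why it is useless there: applying it to $\varepsilon \preccurlyeq \eeq{x_1}{x_2}$ yields something $\preccurlyeq \top$, and no further processing descends from $\top$ to a target equal to $\bot$. If instead $P$ returns $\bot$ (or $\eeqq{x_1}{x_2}$) off the locus, the conjuncts $a \rightarrow \bot$ of $\lamm{P}$ are by definition refutations of $a$, and the dichotomy that put $\lamm{P}$ in $\sepa$ is destroyed. The appeal to Remarks \ref{rem:rf-values} and \ref{rem:rf-implies} is inverted: there $\bot$ sits in \emph{premise} position, where $\bigM_{c}(\bot \rightarrow c)$ is realised by the identity; here it sits in \emph{conclusion} position, where realising $\left( \eeq{x_1}{x} \sqcap \eeq{x_2}{x} \right) \rightarrow \bot$ \emph{is} a refutation of the premise. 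Your governing observation only shows that the premise is unrealisable off $\Equ{X}$; unrealisable is not refutable, and symmetry plus transitivity impose no refutability. Concretely, take $\mathcal{A}$ a complete Heyting algebra with $\sepa = \{\top\}$, $M = \{\top\}$, and the two-point implicative set with $\eeq{1}{1} = \eeq{2}{2} = \top$ and $\eeq{1}{2} = \eeq{2}{1} = g$ where $g \neq \top \neq \neg g$: the conjunct of $\mathsf{Inj}$ at $(x_1, x_2, x) = (1,2,2)$ is then $g \rightarrow \bot = \neg g \nin \sepa$, so \emph{no} realiser --- patched or not --- covers the $\bot$-branch. The conclusion is that the branch you flagged cannot be closed by a term at all; it requires either reading $\eeqq{x_1}{x_2}$ off $\Equ{X}$ as the value actually produced by $\mathcal{K}(\hat{X})$ rather than as $\bot$, or an additional hypothesis making invalid equalities refutable. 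Within the $\Equ{X}$-branch your argument is sound and agrees with the paper; the gap lies exactly in the one step you add beyond it.
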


\begin{proof}
  Let
  \begin{eqnarray*}
    \mathsf{Inj} & \assign & \bigM_{x_1, x_2 \in \ngh{X}}
    \bigM_{x \in X} \left( K (x_1, x) \sqcap K (x_2, x) \rightarrow
    \eeqq{x_1}{x_2} \right)\\
    & = & \bigM_{x_1, x_2 \in \ngh{X}} \bigM_{x \in X}
    \left( \eeq{x_1}{x} \sqcap \eeq{x_2}{x} \rightarrow c (x_1) \sqcap c (x_2)
    \sqcap \siex \nu (x_1, x_2) \right)
  \end{eqnarray*}

  Consider again the map
  \begin{eqnarray*}
    d : \mathcal{A} & \longrightarrow & \mathcal{A}\\
    a & \mapsto & \left\{\begin{array}{lll}
      c (x) &  & \text{there is } x \in \ngh{X}  \text{such that } a =
      \exiplus{X} (x)\\
      \top &  & \text{otherwise}
    \end{array}\right.
  \end{eqnarray*}
  used in the proof of Lemma \ref{lem:pseudo-fun}. We have in particular
  $\lamm{d} \in \sepa$. Furthermore
  \[ \vld{\chi \assign} \bigM_{x_1, x_2 \in \ngh{X}} \left(
     \eeq{x_1}{x_2} \rightarrow \siex \nu (x_1, x_2) \right) \]
  by density, hence
  \begin{eqnarray*}
    \letin{j}{\lam{p q}{} \trans{X} p \left( \sym{X} q \right)}
    \text{{\hspace{16em}}} &  & \\
    \letin{e_1}{\lam{p}{} \trans{X}  \left( \sym{X} p \right) p}
    \text{{\hspace{15em}}} &  & \\
    \letin{e_2}{\lam{p}{} \trans{X} p \left( \sym{X} p \right)}
    \text{{\hspace{14em}}} &  & \\
    \letin{\delta}{\lam{p}{\left( \lamm{d} \right) p}} \text{{\hspace{15em}}} &
    & \\
    \ilam{p q}{\letin{\varepsilon}{j p q} ((\delta (e_1 \varepsilon)) \sqcap
    (\delta (e_2 \varepsilon)) \sqcap (\chi \varepsilon)) } & \preccurlyeq &
    \mathsf{Inj}
  \end{eqnarray*}
\end{proof}

Alas, $[K]$ may fail to be internally surjective. For internal surjectivity
means in our case that
\[ \bigM_{x' \in X} \left( \eexi{X} (x') \rightarrow \iex{x \in
   \ngh{X}} K (x, x') \right) \qquad (\star) \]
is in $\sepa$, yet in general we don't have control over what happens on
ghosts. Notice that if $\mathcal{A}$ is compatible with joins, $x'$ being a
ghost means $\eexi{X} (x') = \bot$, hence factors indexed by ghosts evaluate
to $\top$ so we can ``track'' $(\star)$. $[K]$ is then internally surjective
as well, an exemple for this phenomenon being the well-known equivalence
$\pass \simeq \eff$ {\cite{robinson1990colimit,menni2000exact}}. On the other hand if
we co-restrict $\mathcal{K}$ to ``ghostless'' implicative sets, we get an
equivalence of categories.

\begin{theorem}
  Let $\tmmathbf{\tmop{Set}} [\mathcal{A}]^+$ be the full subcategory of
  $\ensa$ with objects implicative sets $\eset{X}$ such that $\eexi{X} (x) \in
  \sepa$ for all $x \in X$. We have
  \begin{eqnarray*}
    \exlex{(\mass)} & \simeq &
    \tmmathbf{\tmop{Set}} [\mathcal{A}]^+
  \end{eqnarray*}
\end{theorem}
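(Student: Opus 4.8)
The plan is to show that the functor $\mathcal{K}$ of Proposition \ref{prop:K} co-restricts to an equivalence onto the ghostless subcategory $\tmmathbf{\tmop{Set}} [\mathcal{A}]^+$. Since Propositions \ref{prop:K} and \ref{prop:full} already tell us that $\mathcal{K} : \exlex{(\mass)} \rightarrow \ensa$ is full and faithful, it suffices to arrange two things: that $\mathcal{K}$ actually lands in $\tmmathbf{\tmop{Set}} [\mathcal{A}]^+$, and that the resulting co-restriction is essentially surjective onto it. Fullness and faithfulness are then inherited automatically, because $\tmmathbf{\tmop{Set}} [\mathcal{A}]^+$ is a \emph{full} subcategory of $\ensa$; a full, faithful, essentially surjective functor is an equivalence.

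First I would verify that $\mathcal{K}(X)$ is ghostless for every pseudo-groupoid $X$, so that the co-restriction is well-defined. Fix a vertex $x \in X_0$. The reflexivity edge $\rho_X(x) : x \rightsquigarrow x$ belongs to $X_1$, which is an $M$-assembly, so $\exi{X_1}(\rho_X(x)) \in M \subseteq \sepa$; hence $\nu_X(x, x) \cap \sepa \neq \varnothing$ and therefore $\siex \nu_X(x, x) \in \sepa$ by Remark \ref{rem:eta}. As $\exi{X_0}(x) \in M \subseteq \sepa$ and $\sepa$ is closed under $\sqcap$, the existence predicate $\eexi{X}(x) = \ieeq{x}{x}{X} = \exi{X_0}(x) \sqcap \exi{X_0}(x) \sqcap \siex \nu_X(x, x)$ lies in $\sepa$. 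Thus $\mathcal{K}(X) \in \tmmathbf{\tmop{Set}} [\mathcal{A}]^+$.

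The heart of the argument is essential surjectivity. Given a ghostless implicative set $\eset{X}$, I would form the pseudo-groupoid $\hat{X}$ of Lemma \ref{lem:pseudo}; by construction $\mathcal{K}(\hat{X}) = (\ngh{X}, \triplesim)$, and ghostlessness gives $\ngh{X} = X$. The functional relation $K$ of Lemma \ref{lem:fun} furnishes a morphism $[K] : (\ngh{X}, \triplesim) \rightarrow \eset{X}$ which is internally injective by Proposition \ref{prop:inj}. Because $\ngh{X} = X$, it is moreover internally surjective: internal surjectivity amounts to $\vld{\bigM_{x' \in X} ( \eexi{X}(x') \rightarrow \iex{x \in \ngh{X}} K(x, x') )}$, and for each $x'$ the witness $x := x' \in \ngh{X}$ makes $K(x', x') = \eexi{X}(x')$ a member of the existential, so the term $\ilam{v z}{z v}$ tracks the implication uniformly in $x'$ (c.f.\ Remark \ref{rem:eta}). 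Since $\ensa$ is a topos, a morphism which is both internally injective and internally surjective—equivalently, both monic and epic—is an isomorphism; thus $[K]$ is an iso and $\eset{X} \cong \mathcal{K}(\hat{X})$ lies in the essential image.

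Combining the three points, $\mathcal{K}$ co-restricts to a full, faithful and essentially surjective functor $\exlex{(\mass)} \rightarrow \tmmathbf{\tmop{Set}} [\mathcal{A}]^+$, hence an equivalence of categories. The only genuinely delicate step is the internal surjectivity of $[K]$: it is exactly here that ghostlessness is used, precisely to sidestep the pathology flagged before the theorem, where factors indexed by ghosts need not be trackable. Everything else is bookkeeping with the already-established properties of $\mathcal{K}$ and the construction $\hat{X}$.
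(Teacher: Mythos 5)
Your proposal is correct and takes essentially the same route as the paper's own proof: co-restrict $\mathcal{K}$ to $\tmmathbf{\tmop{Set}}[\mathcal{A}]^+$, inherit fullness and faithfulness from Propositions \ref{prop:full} and \ref{prop:K}, get internal injectivity of $[K]$ from Proposition \ref{prop:inj}, and verify internal surjectivity directly, with ghostlessness entering exactly at that last step. If anything, your version is slightly more careful than the paper's: your check that $\mathcal{K}$ lands in the ghostless subcategory via the reflexivity edge $\rho_X$ fills in a step the paper leaves implicit, and your explicit tracker $\ilam{v z}{z v}$ is more precise than the paper's bare claim $\tmop{id} \preccurlyeq \mathsf{Surj}$, which glosses over the $\eta$-expansion required by the second-order encoding of the existential (c.f.\ Remark \ref{rem:eta}).
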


\begin{proof}
  The image of the functor $\mathcal{K}:
  \exlex{(\mass)} \rightarrow \ensa$ is contained
  in $\tmmathbf{\tmop{Set}} [\mathcal{A}]^+$. Its co-restriction
  $\mathcal{K}^+ : \exlex{(\mass)} \rightarrow
  \tmmathbf{\tmop{Set}} [\mathcal{A}]^+$ is thus full (c.f. Proposition
  \ref{prop:full}), faithful (c.f. Proposition \ref{prop:K} and internally
  injective (c.f. Proposition \ref{prop:inj}). Let
  \begin{eqnarray*}
    \mathsf{Surj} & \assign & \bigM_{x' \in X} \left( \eexi{X} (x')
    \rightarrow \iex{x \in \ngh{X}} K (x, x') \right)
  \end{eqnarray*}
  Suppose $\eexi{X} (x')$. We then have $x' \in \ngh{X}$ so in particular $K
  (x', x') = \eexi{X} (x')$, thus $\tmop{id} \preccurlyeq \mathsf{Surj}$.
  Hence $[K]$ is iso, thus $\mathcal{K}^+$ is an equivalence of categories.
\end{proof}

Wrapping it up, the following picture begins to emerge

\begin{center}
  \raisebox{-0.5\height}{\includegraphics[width=14.8909550045914cm,height=2.77676111767021cm]{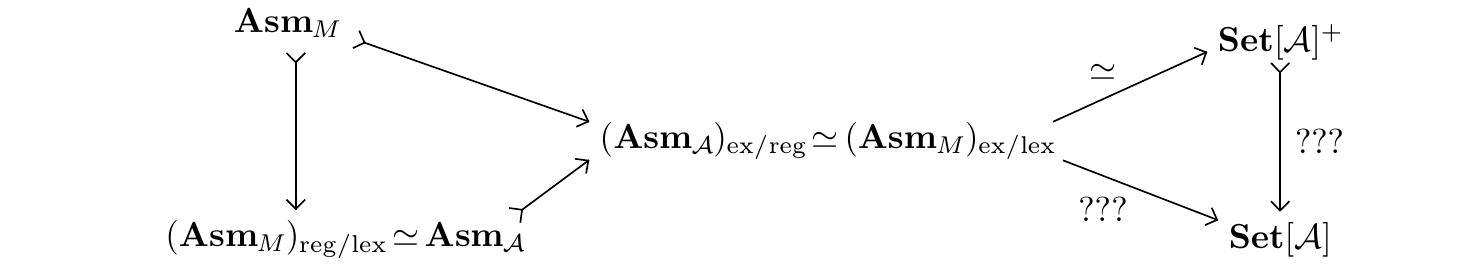}}
\end{center}

All the categories involved but $\mass$ and
$\asma$ are topoi. In particular, we
see that $\asma$ is a subcategory of
$\ensa$ yet the embedding is not the trivial one. As a matter of fact,
assemblies embed as kernel pairs of their projective covers. A
characterisation of those implicative algebras for which we have
$\exlex{(\mass)} \simeq \ensa$ is an open question.
We may have more to say about this circle of ideas in subsequent elaborations.


\begin{thebibliography}{10}
  \bibitem[1]{barr1971exact}Michael Barr, Pierre Grillet, Donovan Van Osdol,
  and  Michael Barr. {\newblock}\tmtextit{Exact categories}.
  {\newblock}Springer, 1971.{\newblock}

  \bibitem[2]{carboni1995some}Aurelio Carboni. {\newblock}Some free
  constructions in realizability and proof theory.
  {\newblock}\tmtextit{Journal of pure and applied algebra}, 103(2):117--148,
  1995.{\newblock}

  \bibitem[3]{carboni1982free}Aurelio Carboni and Celia Magno.
  {\newblock}The free exact category on a left exact one.
  {\newblock}\tmtextit{Journal of the Australian Mathematical Society},
  33(3):295--301, 1982.{\newblock}

  \bibitem[4]{Castro:2023aa}F{\'e}lix Castro, Alexandre Miquel, and  Krzysztof
  Worytkiewicz. {\newblock}Implicative assemblies. {\newblock}Submitted,
  2023.{\newblock}

  \bibitem[5]{hyland1982effective}Martin Hyland. {\newblock}The effective
  topos. {\newblock}In \tmtextit{Studies in Logic and the Foundations of
  Mathematics},  volume  110,  pages  165--216. Elsevier, 1982.{\newblock}

  \bibitem[6]{hyland1980tripos}Martin Hyland, Peter Johnstone, and  Andrew
  Pitts. {\newblock}Tripos theory. {\newblock}In \tmtextit{Mathematical
  Proceedings of the Cambridge philosophical society},  volume~88,  pages
  205--232. Cambridge University Press, 1980.{\newblock}

  \bibitem[7]{kinoshita2014category}Yoshiki Kinoshita  and  John Power.
  {\newblock}Category theoretic structure of setoids.
  {\newblock}\tmtextit{Theoretical Computer Science}, 546:145--163,
  2014.{\newblock}

  \bibitem[8]{maietti2015unifying}Maria~Emilia Maietti  and  Giuseppe
  Rosolini. {\newblock}Unifying exact completions.
  {\newblock}\tmtextit{Applied Categorical Structures}, 23:43--52,
  2015.{\newblock}

  \bibitem[9]{menni2000exact}Mat{\'i}as Menni. {\newblock}\tmtextit{Exact
  completions and toposes}. {\newblock}PhD thesis, 2000.{\newblock}

  \bibitem[10]{miquel2020implicative}Alexandre Miquel. {\newblock}Implicative
  algebras: a new foundation for realizability and forcing.
  {\newblock}\tmtextit{Mathematical Structures in Computer Science},
  30(5):458--510, 2020.{\newblock}

  \bibitem[12]{robinson1990colimit}Edmund Robinson  and  Giuseppe Rosolini.
  {\newblock}Colimit completions and the effective topos.
  {\newblock}\tmtextit{The Journal of Symbolic Logic}, 55(2):678--699,
  1990.{\newblock}

  \bibitem[13]{rosolini2019elementary}Giuseppe Rosolini, Fabio Pasquali, and
  Maria~Emilia Maietti. {\newblock}Elementary quotient completions, church's
  thesis, and partioned assemblies. {\newblock}\tmtextit{Logical Methods in
  Computer Science}, 15, 2019.{\newblock}

  \bibitem[14]{shulman2021derivator}Michael Shulman. {\newblock}The derivator
  of setoids. {\newblock}\tmtextit{Cahiers de topologie et g{\'e}om{\'e}trie
  diff{\'e}rentielle cat{\'e}goriques}, 2023.{\newblock}
\end{thebibliography}
\end{document}